\documentclass[11pt]{article}
\usepackage{amsmath,amssymb}
\usepackage[left=1in,right=1in,top=1.1in,bottom=1in]{geometry}
\usepackage{bm}
\usepackage{expl3}

\usepackage{tikz}
\usepackage{appendix}
\usepackage{subfigure}
\usepackage[english]{babel}
\usepackage{bbm}
\usepackage{graphicx}
\usepackage[center]{caption2}
\usepackage{amsfonts,amssymb,amsmath,latexsym,amsthm}
\usepackage{multirow}
\usepackage{enumerate}
\usepackage{geometry}
\usepackage[usenames,dvipsnames]{pstricks}
\usepackage{epsfig}
\usepackage{pst-grad} 
\usepackage{pst-plot} 
\usepackage[space]{grffile} 
\usepackage{enumitem}
\usepackage{etoolbox} 
\makeatletter 
\patchcmd\Gread@eps{\@inputcheck#1 }{\@inputcheck"#1"\relax}{}{}

\newtheorem{thm}{Theorem}[section]
\newtheorem{cor}[thm]{Corollary}
\newtheorem{lem}[thm]{Lemma}

\newtheorem{prop}[thm]{Proposition}

\newtheorem{claim}[thm]{Claim}

\newtheorem{property}[thm]{Property}

\newtheoremstyle{plainupright}
  {\topsep}   
  {\topsep}   
  {\upshape}  
  {}          
  {\bfseries} 
  {.}         
  { }         
  {}          
\theoremstyle{plainupright}
\newtheorem{example}{Example}

\newtheorem{dfn}{Definition}[section]

\let\svthefootnote\thefootnote
\newcommand\blankfootnote[1]{%
	\let\thefootnote\relax\footnotetext{#1}%
	\let\thefootnote\svthefootnote%
}

\def\endpf{\hfill $\Box$}

\newcommand{\xhdr}[1]{\paragraph{\bf #1.}}

\newcommand{\omt}[1]{}

\newcommand\supproof[1]{}

\def\coll{{\mathcal{X}}}
\def\seen{S}
\def\fint{t^*}
\def\zt{t^+}

\def\trueL{{K}}

\def\out{o}

\newcommand{\genlang}{\Lambda}
\newcommand{\termlang}{\Lambda'}

\newcommand{\md}{{\mathcal{D}}}
\newcommand{\mY}{{\mathcal{Y}}}
\newcommand{\mX}{{\mathcal{X}}}
\newcommand{\ma}{{\mathcal{A}}}
\newcommand{\mC}{{\mathfrak{C}}}

\newcommand{\mx}{{\mathfrak{x}}}
\newcommand{\my}{{\mathfrak{y}}}
\newcommand{\mz}{{\mathfrak{z}}}

\newcommand{\mB}{{\mathcal{B}}}
\newcommand{\Acc}{\mathcal{A}_{\text{acc}}}
\newcommand{\dor}{\mu_{\text{order}}}

\newcommand{\dup}{\mu_{\text{up}}}
\newcommand{\dlow}{\mu_{\text{low}}}
\newcommand{\mS}{{\mathcal{S}}}
\newcommand{\tim}{{t_{i-1}}}
\newcommand{\ti}{{t_{i}}}
\newcommand{\tti}{{\theta(t_{i})}}

\newcommand{\tb}{{t_b}}
\newcommand{\ts}{{t^*}}
\newcommand{\ths}{{\theta^*}}
\newcommand{\Fs}{{F^*}}
\newcommand{\Succ}{\text{succ}}

\newcommand{\laj}{\mathsf{J}}

\newcommand{\lan}{\mathsf{L}}
\newcommand{\lanj}[1]{\mathsf{J}_{#1}}

\newcommand{\lang}[1]{\mathsf{L}_{#1}}

\newcommand{\ga}[1]{\Gamma_{#1}}
\newcommand{\fbga}[1]{\widetilde{\Gamma}_{#1}}

\newcommand{\res}[2]{{#1}_{#2}}

\ExplSyntaxOn

\ExplSyntaxOff

\begin{document}

\pagenumbering{gobble}
	
\title{Density Measures for Language Generation}
	
\date{\today}

 \author{Jon Kleinberg\thanks{Department of Computer Science and Information Science, Cornell University, Ithaca NY 14853 USA.  Supported in part by a Vannevar Bush Faculty Fellowship, AFOSR grant FA9550-23-1-0410, a Simons Collaboration grant, and a grant from the MacArthur Foundation.} \and Fan Wei\thanks{Department of Mathemaics, Duke University, 120 Science Drive, Durham, NC 27710, USA. Research supported by NSF grants DMS-2404167 and DMS-2401414. } }

\maketitle

\begin{abstract}
The recent successes of large language models (LLMs) have led to a surge of theoretical research into the properties of language generation. A recent line of work has proposed an abstract view of the question --- called \emph{language generation in the limit} --- in which we view language generation as a game played between an adversary and an algorithm: the adversary generates strings from an unknown language $K$, known only to come from a countable collection of candidate languages, and after observing a finite set of these strings, the algorithm must generate new strings from the language $K$ that it hasn't seen before. This formalism highlights an important tension: the trade-off between \emph{validity} (that the algorithm should only produce strings that come from the language) and \emph{breadth} (that the algorithm should be able to produce ``many'' strings from the language). This validity-breadth trade-off is a central issue in applied work on language generation as well, where it arises in the balance between \emph{hallucination}, when models generate invalid utterances, and \emph{mode collapse}, when models only generate from a very restricted set of feasible outputs. Despite its importance, this trade-off has been challenging to study quantitatively. 

In this work we develop ways of quantifying this trade-off, by formalizing the notion of breadth through measures of \emph{density}. Roughly speaking, the density of one language $L$ in another language $\lan'$ is the limiting fraction of strings from $\lan$ among the strings of $\lan'$, where we take the limit over longer and longer finite prefixes of $\lan'$. Existing algorithms for language generation in the limit produce output sets that can have zero density in the true language $K$, in this asymptotic sense, and this represents an important failure of breadth that might seem necessary in any solution to the problem. We show here that such a failure is not in fact necessary: we provide an algorithm for language generation in the limit whose outputs have strictly positive density in the true language $K$. We also study the internal representations built by algorithms for this problem --- the sequence of hypothesized candidate languages they iterate through as they perform generation --- showing a precise sense in which the strongest form of breadth achievable is one that may need to ``oscillate'' indefinitely between hypothesized representations of high density and low density. Our analysis introduces a novel topology on language families, with notions of convergence and limit points in this topology playing a key role in the analysis. 
\end{abstract}

\newpage

\tableofcontents

\newpage

\pagenumbering{arabic}
\setcounter{page}{1}

\section{Introduction}


The rapidly expanding power of generative AI 
has made {\em language generation} a ubiquitous
primitive in current computing applications:
given a large training corpus of text, we can build a model
capable of consistently producing valid, new pieces of text that
it has never seen in its training data.
In this sense, the underlying language that the model is generating 
from is never really ``known'' to the model,
since it is provided only indirectly through the 
examples in the training data.
The fact that large language models (LLMs) can nevertheless
generate such fluent utterances is one of the key
conceptual mysteries at the heart of modern methods for language generation.

Recent theoretical work has sought to understand this mystery better
by exploring it at several different levels of abstraction. At a
fine-grained level, recent work has analyzed the computational power
of the transformer architecture that current language models are built on \cite{peng-transformer,sanford-transformer,wang-transformer,weiss-transformer}.
At a more abstract level, researchers have also sought to uncover
the core mathematical structure that underlies language generation,
treating it as a well-defined computational problem in itself,
independent of specific models or algorithms \cite{charikar-pabbarju,kalai2023calibrated,kalavasis-stoc25,KM24,li-generation-pac}.
This second approach aims to
reveal the fundamental mathematical principles governing effective
language generation, abstracting away from implementation details.

Our work falls within this second framework, adopting a basic,
assumption-free approach to study language generation. 
This assumption-free framework has
its roots in work by Gold and Angluin that began roughly 60 years ago
\cite{angluin1979finding,angluin1980inductive,gold1967language};
they were guided by a different language learning problem, but
the basic set-up shares common ingredients.
In the Gold-Angluin model,
we start with an underlying countably infinite ground set $U =
\{u_1, u_2, u_3, \ldots\}$ consisting of all possible strings.
For our purposes, a language $\lan$ is simply a subset of $U$. 
(All languages considered in this paper are infinite.) 
An adversary chooses a target language $K$ that is known only to come
from a countable collection of candidate languages
$\coll = \{\lang{1}, \lang{2}, \dots\}$, and 
the adversary then enumerates the strings of $K$ one by one,
producing string $w_t \in K$ at time step $t$.

The goal in the Gold-Angluin model is for an algorithm to observe
the enumeration of $K$ and guess which language in $\coll$ the adversary
is enumerating.
In particular, in step $t$, the algorithm guesses an index $i_t$
with the goal that $\lang{i_t} = K$.
The algorithm achieves {\em language identification in the limit}
if its guess is always correct after some finite time;
that is, if there is some time $t^*$ such that 
$\lang{i_t} = K$ for all $t \geq t^*$.
In this way, we can think of language identification as a game
played between the adversary --- who enumerates the string of $K$ ---
and an algorithm --- who guesses the identity of the language ---
in alternating turns.
The basic results for the Gold-Angluin model are negative:
identification in the limit is provably impossible for all but very
restricted collections of languages $\coll$
\cite{angluin1979finding,angluin1980inductive,gold1967language}.

The problem we are considering, however, is language {\it generation} 
rather than language identification: the algorithm does not need to
discover the identity of the language, but only to generate new strings from it.
We therefore keep the structure of the game between an adversary and an 
algorithm, but change the rules of the game,
following a model of Kleinberg and Mullainathan \cite{KM24} 
(henceforth, the {\em KM model}).
As before, the adversary chooses a target language $K$ known only
to come from a countable set of candidates 
$\coll = \{\lang{1}, \lang{2}, \dots\}$, and
it enumerates the strings of $K$.
But now the algorithm has a different goal:
at time $t$, having seen the strings $\seen_t$ generated so far, 
the algorithm must generate a string $\out_t$ that comes 
from $\trueL - \seen_t$: that is, from 
true language $\trueL$ but not among the set $\seen_t$ already
enumerated. 
The algorithm wins the game---it {\em generates in the
limit} from $\trueL$---if after some finite time $\fint$, all of its
output strings $\out_t$ come from $K - \seen_t$.

The main result for the KM model is a positive one that contrasts
sharply with the negative results for language identification:
there is an algorithm (henceforth, the {\em KM algorithm})
that can achieve generation in the limit for {\em every}
countable collection $\coll$ of candidate languages. This is a
surprising result, given that we make essentially no assumptions about
the structure of the languages, other than that they come from 
a countable list of candidates $\coll$.
It establishes an abstract sense in which the problem of language generation
is very different from --- and more tractable than --- the problem
of language identification.

\subsection{The trade-off between breadth and validity in language generation}
\label{subsec:intro:tradeoff}

The success of language generation in practice has obscured a deep 
question that is challenging to address, 
both within the theoretical models of the process
and also within the empirical work on deployed systems.
The question is this:
\begin{quote}
{\em When a given algorithm succeeds at language generation --- 
consistently producing valid new utterances based on training data ---
how ``much'' of the language is it actually capable of generating?}
\end{quote}
Intuitively, this is a question about the {\em breadth} of what
is being generated: is a given generation algorithm
capable of producing a broad subset of the language,
or is it succeeding by staying carefully within a narrow
subset of the language?

Our goal in this paper is to study such breadth questions theoretically within
the KM model, where recent worked has begun to demonstrate some of their rich structure \cite{charikar-pabbarju,kalavasis-stoc25,KM24}.  
(Below in Section \ref{subsec:intro:related} we draw contrasts between our approach and the approaches in these recent papers.)
We also note that 
in an empirical sense, it is clear that these same types of breadth question
also lurk in the background of discussions about how ChatGPT,
Claude, Gemini, and other LLMs risk creating a narrow
homogeneity of writing styles ---
a version of what generative AI researchers refer to as 
{\em mode collapse}, a failure of breadth
in which the set of outputs being generated
collapses into a small fraction of the space of all possible outputs
\cite{arjovsky2017towards,arjovsky2017wasserstein}.
If we can find more concrete formal definitions of breadth, 
we may be able to eventually start quantifying 
these issues more sharply in deployed LLMs as well.

As we describe in more detail below, we will formulate our questions in
terms of the {\em density} of one language in another:
roughly speaking, we determine the density of a language $\lan$ in another
language $\lan'$ by looking at the fraction of the first $N$ strings in
$\lan'$ that belong to $\lan$, and then taking a limit as $N$ goes to infinity.
In particular, if the set of outputs of a generation algorithm has
high density in the true language $K$, then the algorithm is generating
with high breadth.
In this way, density gives our question of breadth a quantitative dimension:
breadth in generation is not all-or-nothing, but instead 
we can talk about generation algorithms whose outputs have higher or 
lower density in the true language $K$.

Since our goal is to find generation algorithms whose output sets have
high density, 
it's useful to start by observing the sense in which 
the KM algorithm \cite{KM24} naturally tends toward low-density output sets.
We will have more to say about this in the next few sections, but
at a high level, we can describe the KM algorithm  as follows.
The algorithm maintains at each time $t$ a potentially infinite
descending chain of languages nested by inclusion ---
$\lang{c_1} \supseteq \lang{c_2} \supseteq \lang{c_3} \supseteq \cdots$ ---
with the property that (i) each of these languages is {\em consistent}
at time $t$ (the sample $S_t \subseteq K$ is a subset of each of them),
and (ii) after some finite time, the true language $\trueL = \lang{z}$
must be one of these languages.
At time $t$,
the KM algorithm outputs a string from the language $\lang{c_i}$ in
this chain where $c_i$ is maximal subject to $c_i \leq t$.
Since each $\lang{c_i}$ is consistent, and $t$ grows unboundedly,
it follows that we will eventually have $c_i \geq z$,
at which point $\lang{c_i} \subseteq \lang{z} = K$.

Notice that in this way, the KM algorithm isn't just achieving 
element-based generation --- producing output strings $\out_t$ in 
each step $t$ --- but also what we might call 
{\em index-based generation in the limit}:
in each step $t$ it identifies a language $\lang{i_t}$ with the
property that after some finite time $\fint$, we have
$\lang{i_t} \subseteq \trueL$.
(Once $\lang{i_t}$ is a subset of $\trueL$, any string the algorithm
produces from $\lang{i_t}$ must come from $\trueL$, thus satisfying
our original element-based guarantee.)
However, the algorithm is forever uncertain about when the language it
identifies really becomes a subset of $K$ (and in a sense, this
is unavoidable given the negative results of Gold and Angluin for 
identification in the limit).
Therefore it perpetually moves
further and further down its descending chain of languages
$\lang{c_1} \supseteq \lang{c_2} \supseteq \lang{c_3} \supseteq \cdots$,
each step shrinking the set of strings it could generate. This process
can continue indefinitely. In the end, even though we know the algorithm
will eventually reach languages for which $\lang{i_t} \subseteq \trueL$,
the set of strings it produces 
may dwindle to an infinitesimally small fraction of the true
language $K$. It's very easy to construct examples where the density of
the algorithm's output in $K$ is zero in an asymptotic sense.

Even though this description is specialized to the KM algorithm,
there is something intuitively natural about it:
For a language generation algorithm in general, there is a
tension between two goals:
(i) {\em validity}, to guarantee that the algorithm
generates from $K$ in the limit, and
(ii) {\em breadth}, to guarantee that the algorithm
generates a ``large'' amount of $K$, as formalized in our case
by the notion of density.
Both of these ideas are fundamental to language generation both in its theoretical formulations and in deployed applications; they appear under a variety of different names.
Earlier we discussed {\em mode collapse} as a failure of breadth \cite{arjovsky2017towards,arjovsky2017wasserstein};
we note that failures of validity are also a central issue, and they arise in discussions of the tendency of language models to {\em halluncinate} and produce utterances that don't accord with their specifications
\cite{kalai2023calibrated,xu2024hallucination}.

The direct tension between these two ideas --- validity and breadth --- feels fundamental as well:
if we must guarantee validity, it seems natural that we need, in some
sense, to successively shrink the set of strings being considered, so
that we can sure that they are contained inside $K$ eventually.
This naturalness also makes it feel intuitive to wonder whether any
solution to generation in the limit must necessarily tend toward
output sets of zero density for some instances. A main question left
open by \cite{KM24} is whether this is indeed necessary in any
solution to generation in the limit. 

We now pose this question precisely, and then describe our resolution of it.

\subsection{Formalizing breadth in language generation using density}
\label{sec:intro:density}

We now give the precise definition of density that we use
for formalizing breadth in language generation; 
from this definition, we can then state our main questions more precisely.

If $\lan$ and $\lan'$ are two languages in $\coll$, and the elements of $\lan'$ listed in order as
$\lan' = \{\ell_1', \ell_2', \ell_3', \ldots\}$, 
we say that the {\em upper density} of $\lan$ in $\lan'$ is
   \[ \dup(\lan, \lan') = \limsup_{N \to \infty}  \frac{\mid \lan \cap \{\ell_1', \dots, \ell_N'\} \mid}{N}.\]
Analogously, we say that the {\em lower density} of $\lan$ in $\lan'$ is
   \[ \dlow(\lan, \lan') = \liminf_{N \to \infty}  \frac{\mid \lan \cap \{\ell_1', \dots, \ell_N'\} \mid}{N}.\]

It is necessary to have both definitions:
the upper and lower densities can differ dramatically if, for example, 
$\lan$ consists
of increasingly long intervals that alternate between containing
many elements of $\lan'$ and then very few elements of $\lan'$.
It is not hard to create examples like this where 
the upper density can be arbitrarily close to one while 
the lower density is arbitrarily close to zero.

\xhdr{The density of outputs}
With these definitions, studying the 
density of the algorithm's outputs 
in the true language $\trueL$ lets us 
formalize our quantitative notion of breadth.
Specifically let $E = \{w_1, w_2, w_3, \dots\}$ be 
the sequence of strings enumerated in order by the adversary
in an instance of the KM model, and let $\ma$ be any
algorithm that generates from $\trueL$ in the limit.
If we run the interaction of the adversary and the algorithm forever (in each time step the adversary
outputs a string $w_t$, and $\ma $ outputs an string $o_t$ in reply,
and then we iterate over time steps), we get an infinite set $O(E,\ma)
=\{o_1, o_2, \dots\}$ consisting of all strings in $\trueL$
that the algorithm $\ma$ ever 
outputs.\footnote{Note that there is a 
slightly larger set $O'(E,\ma)$ consisting
of {\em all} the strings that $\ma$ ever generates;
we have $O(E,\ma) = O'(E,\ma) \cap \trueL$, and 
since $\ma$ achieves generation in the limit, we
know that $O'(E,\ma)$ only contains at most finitely many
elements that are not also in $O(E,\ma)$.}

Two key quantities are now the lower density
and the upper density respectively of $O(E,\ma)$ in $\trueL$:
that is, $\dlow(O(E,\ma),K)$ and $\dup(O(E,\ma),K)$.
These are both ways of measuring the breadth of the 
algorithm's output, with the lower density serving as
the more demanding measure since it will always be less
than or equal to the upper density.

We now ask:

\noindent\textbf{Question ($\ast$)}
\vspace{-0.1in}
\begin{quote}
    {\em ($\ast$) Is there an absolute constant $c > 0$
and an algorithm $\ma$ that achieves generation in the
limit for every instance,
and for which the lower density of $O(E,\ma)$ in $\trueL$ is always
at least $c$?}
 \end{quote}
(There is also a weaker version of this question in which 
``lower density'' is replaced by the easier-to-achieve ``upper density''.)

We observe that this question makes precise two key points from
our earlier discussion.
First, Question ($\ast$) formalizes a particular trade-off
between validity (that the algorithm achieves generation in the limit)
and breadth (that the algorithm generates a positive fraction of
the strings in $\trueL$, in the sense of lower density).
And second, our earlier observations about the KM algorithm can be 
summarized by saying that there are instances where 
its output set $O(E, \ma)$ (with $\ma$ equal to the KM algorithm) can have upper density 0, and therefore
also lower density 0, in $\trueL$.

We can therefore think of Question ($\ast$) as also asking 
whether the validity-breadth trade-off suggested by the properties of the
KM algorithm is fundamentally necessary:

\noindent\textbf{Question ($\ast \ast$)}
\vspace{-0.1in}
\begin{quote}
{\em 
Is zero lower density (or even zero upper density) 
unavoidable on at least some instances for any algorithm that
achieves generation in the limit?}
\end{quote}

We will survey our results in detail later in the introduction, but
to preview one of the main results now:
we show that zero density is not inevitable.
In particular, we answer Question ($\ast$) in the affirmative,
designing an algorithm $\ma$ whose output set $O(E,\ma)$ always
has lower density at least $c$, for a fixed absolute constant $c > 0$.

\xhdr{Density for index-based generation}
Recall that the KM algorithm achieves a stronger type of guarantee
that we refer to as {\em index-based generation}:
in each step $t$, it guesses a language $\lang{i_t} \in \coll$
in such a way that $\lang{i_t} \subseteq \trueL$ for all $t$
beyond some finite step $\fint$.

We can use the notions of upper and lower density to
reason about breadth for index-based generation as well.
In particular, it is easy to construct instances for which
the upper densities (in $\trueL$) of the languages $\lang{i_t}$ 
chosen by the KM algorithm converge to 0 as $t$ increases;
this is a formal sense in which 
the algorithm achieves generation by explicitly seeking
out increasingly ``thin'' subsets of $\trueL$, and thereby generating
new elements of $\trueL$ forever, but from more and more restricted
parts of it.

We can therefore ask whether this type of trade-off for
index-based generation is necessary as well.

\noindent\textbf{Question ($\ast\!\ast\!\ast$)}
\vspace{-0.1in}
\begin{quote}
{\em 
Is there an algorithm for index-based generation in the limit for which 
the upper (or lower) densities of the languages $\lang{i_t}$
do not converge to 0 as $t$ increases?
More strongly, 
is there an algorithm for index-based generation in the limit for which 
the upper (or lower) densities of the languages $\lang{i_t}$
remain uniformly bounded away from 0 as $t$ increases?
}
\end{quote}

As we will see, the answer to this question is subtle, in that it
is neither fully positive nor fully negative.
We can show that there is an algorithm for index-based generation in the limit 
where $\lang{i_t}$ achieves lower density 1 in $\trueL$ infinitely often,
and therefore the sequence of lower densities (and hence upper densities)
does not converge to 0 in $t$.
On the other hand, we can also show that there are instances where
for any algorithm achieving index-based generation in the limit, the
languages $\lang{i_t}$ it generates must contain an infinite 
{\em subsequence} whose upper densities converge to 0.
Therefore, the best densities achievable for index-based generation require --- on at least some instances --- a kind of oscillation between high densities
and low densities over the steps of the adversary-algorithm interaction.

\subsection{Further Related Work}
\label{subsec:intro:related}

Exploring the question of validity-breadth trade-offs
via the KM model has been the subject of other recent papers;
they pursue different formulations, and our results in fact form
an interesting contrast with theirs.
In particular, Charikar and Pabbaraju \cite{charikar-pabbarju}
ask whether there could be
an algorithm that achieves generation in the limit, with the property
that if we allow it to generate a countable list of strings
after seeing a finite set $S_t \subseteq \trueL$, it is then able to
generate all the remaining strings in $K - \seen_t$.
Kalavasis et al \cite{kalavasis-stoc25}
ask similar questions, and others,
in a different model where
the adversary produces its enumerated strings by first specifying
a probability distribution over the elements of $\trueL$.
In both cases, the papers obtain remarkable negative results. The conclusion from their work is that this very strong notion
of breadth is too much to require in general.

In contrast to these strong negative results, our results establish that
if we ask for slightly less --- an approximate version
as quantified by the upper and lower
density measures, in which it suffices to generate a positive fraction of the language --- then in fact the trade-off between validity
and breadth isn't as severe as the structure of the KM algorithm might initially make it seem:
an approximate middle ground with both generation in the limit
and positive lower density is possible.

\section{Overview of Results}
\label{sec:overview}

In this section, we give an overview of the main results of the paper,
before moving on to their proofs in the subsequent sections.

\subsection{Formal Definition of Generation: Element-Based and Index-Based}

In order to state the results, we begin by briefly recapping the 
KM model and the two types of guarantees for generation in the limit ---
element-based and index-based --- that we use.

\begin{dfn}[KM Model]\label{def:KM}
As before, the KM model consists of a game played between
and adversary and an algorithm.
At the outset, adversary chooses a true language $\trueL$ from 
a countable list of options
$\coll = \{\lang{1}, \lang{2}, \dots\}$,
where each language $\lang{i} \in \coll$ 
is an infinite subset of the ground set $U$.\footnote{As a 
concrete example, we could imagine that the collection
$\coll$ consists of all languages from some well-known language
family, such as all regular languages or all context-free languages,
but the model is in fact general enough 
to handle arbitrary collections $\coll$.}
We imagine the collection being specified by a black box that
answers {\em membership queries} for the algorithm:
given a string $w$ and an index $i$, it will report whether
or not $w \in \lang{i}$.
The adversary chooses a language $\trueL \in \coll$, but
does not reveal the identity of $\trueL$ to the algorithm, and 
it then enumerates the strings of $\trueL$ one by one.
We say that this constitutes an {\em instance} of the generation problem:
a countable collection $\coll$ of candidate languages, together
with an enumeration of one of these languages $\trueL \in \coll$.
Now, let $w_t$ be the string that the adversary enumerates in step $t$,
and let $\seen_t = \{w_1, \dots, w_t\}$ be the finite set of all
strings that the adversary has enumerated up
through time step $t$.
In each step, the algorithm must output a string $\out_t$, with
the goal that hopefully it should come from the true language $\trueL$.
\end{dfn}
\begin{dfn}[Validity]
We say that the algorithm achieves
{\em generation in the limit} 
if there is some time $t^*$ after which the algorithm always 
produces a string from $\trueL$ ---
that is, if $\out_t \in \trueL$ for all $t \geq t^*$.
\end{dfn}
Here $t^*$ is allowed to depend on the adversary enumeration order. 

As mentioned earlier, the KM model is closely related to the 
60-year-old Gold-Angluin model. 
The only difference is that the algorithm in the Gold-Angluin model
must guess the identity of the language, by outputting an index $i_t$
in each step with the goal that $\lang{i_t} = \trueL$.

\xhdr{Element-Based and Index-Based Generation}
Earlier, we also noted that the KM algorithm 
has the following additional structure that is of interest in its own right:
at all times $t$, it tries to maintain a {\em subset} of
the true language $\trueL$, since if it generates a string
$\out_t$ from a subset of $K$, then $\out_t$ must belong to $K$ as well.
More precisely, at every step $t$, 
the KM algorithm produces an index $i_t$,
and it achieves the guarantee that after some time $\fint$,
we have $\lang{i_t} \subseteq \trueL$ for all $t \geq \fint$.
Then it simply generates a string $\out_t \in \lang{i_t} - \seen_t$,
and this string satisfies the requirement that $\out_t \in K - \seen_t$.

We therefore have two related but distinct notions of
generation in the limit:
\begin{itemize}
\item {\em Element-based generation:} 
This is our original notion, in 
which the goal is to produce a string 
$\out_t$ such that $\out_t \in K - \seen_t$ for all $t$ after some finite time
$\fint$.
\item {\em Index-based generation:}
This is the further guarantee that the KM algorithm also achieves,
in which the goal is to produce an index $i_t$ for a
language such that $\lang{i_t} \subseteq K$ for all $t$ after 
some finite time $\fint$.
Index-based generation directly implies element-based generation.
\end{itemize}

Both of these notions of generation
are naturally motivated from the operation of
LLMs in practice: from the user-facing side these models engage in 
element-based generation; but since they operate by maintaining
an internal representation that continually evolves in the face
of training samples, they can be viewed on the back end
as performing a kind of
index-based generation in which the index $i_t$ corresponds to
the algorithm's choice of representation at time $t$.

As a final point in setting up the model, our notion of an algorithm in this paper is any function that takes the collection $\coll$ and the strings seen so far $\seen_t$ and produces a string $\out_t$ (in the case of element-based generation) or an index $i_t$ of a language in $\coll$ (in the case of index-based generation).
This follows the tradition of past work on both the Gold-Angluin model as well as the KM model of generation and its recent follow-ups, where the main distinctions between positive and negative results rely on the algorithm's lack of knowledge of the true language $\trueL$, rather than on issues of computability.
(In the terminology of the KM model, this means that we are studying {\em generation in the limit via a function} \cite{KM24}, where we need to design a function $f_\coll(\seen_t)$ that produces either a string $\out_t$ or an index $i_t$.)

\subsection{Overview of Results: Index-Based Generation}

We now survey the main results of the paper, first for
index-based generation, and then for element-based generation.

\xhdr{Accurate generation}
In the context of index-based generation, there is a natural
first question to ask that doesn't directly involve quantitative bounds 
on density, but helps in reasoning about them.
Specifically, as the KM algorithm iterates through language indices $i_t$,
eventually producing only languages $\lang{i_t}$
that are subsets of $\trueL$ (once $t \geq \fint$), we can ask about the time steps in which
it has the language exactly right: that is, when has it chosen an
index $i_t$ for which $\lang{i_t} = \trueL$?\footnote{For this 
it would be sufficient to have $i_t = z$, but given that
the same language can occur multiple times in the list 
$\coll$ of candidate languages, we might have some index $z' \neq z$
for which $\lang{z'} = \lang{z} = \trueL$.
Going forward, we will make the assumption that $\lang{z}$ represents
the first occurrence of $\trueL$ in $\coll$; that is,
$\lang{i} \neq \trueL$ for all $i < z$.}
We will say that the algorithm is {\em accurate} in a given
step $t$ if $\lang{i_t} = \trueL$.

Now, it is not hard to verify that on many basic instances
of language generation (specified by a collection $\coll$
of candidate languages and an adversary enumerating one of these
language $\trueL = \lang{z}$), the KM algorithm is accurate
in only finitely many steps.
Would it be possible to design an algorithm that achieves
generation in the limit but is accurate in an infinite sequence of steps?
If we start from the KM algorithm --- which
as described in Section \ref{subsec:intro:tradeoff}
involves moving down
an infinite descending chain of languages 
$\lang{c_1} \supseteq \lang{c_2} \supseteq \lang{c_3} \supseteq \cdots$ that eventually contains the true language $\trueL = \lang{z}$ ---
this would seem to require some controlled way of moving back
``up'' the chain toward $\trueL = \lang{z}$ infinitely often.
But if we move back up the chain, how to do we make sure we don't
move too far back infinitely often, repeatedly choosing a proper
superset of $\trueL$ and thus failing to achieve generation in the limit?

In fact, however, we show that it is possible to design an algorithm
that moves back to exactly $\trueL$ infinitely often, 
while also achieving generation in the limit:
\begin{thm}\label{thm:acc-intro}
There is an algorithm that achieves 
index-based generation in the limit and is also accurate
in an infinite sequence of time steps.
\end{thm}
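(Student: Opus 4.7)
The plan is to construct $\ma$ as a modification of the KM algorithm that interleaves its usual descent down a chain of subsets of $\trueL$ with periodic ``returns'' back up to an index of $\trueL$ itself. Recall that the KM subroutine maintains a descending chain $\lang{c_1^t} \supseteq \lang{c_2^t} \supseteq \cdots$ and outputs the deepest position; after some finite time, the chain contains $\trueL = \lang{z}$ at a fixed position $i^*$, with proper supersets of $\trueL$ above and proper subsets below. I would run KM as a subroutine, producing at each time $t$ a safe index $c_t$ with $\lang{c_t} \subseteq \trueL$ for $t \geq \fint$, and then designate a sparse infinite subsequence of ``accuracy attempt'' times $\tau_1 < \tau_2 < \cdots$ (say $\tau_k = 2^k$) at which $\ma$ is allowed to deviate from $c_t$. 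On all non-attempt times, $\ma$ simply outputs $c_t$.

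At each attempt $\tau_k$, I would target a particular chain position $j(k)$ chosen by a dovetailing schedule so that every fixed position is targeted infinitely often. The algorithm outputs the chain-index $c_{j(k)}^{\tau_k}$ only if a ``safety certificate'' affirms $\lang{c_{j(k)}^{\tau_k}} \subseteq \trueL$; otherwise it falls back to $c_{\tau_k}$. The certificate I have in mind is built from persistence and dominance in the chain: roughly, require that position $j(k)$ has held a fixed index across many prior steps and that some deeper position in the chain has already been certified $\subseteq \trueL$ by the KM descent argument. Because the chain stabilizes at positions $\geq i^*$ once the adversary has exposed enough of $\trueL$, this certificate will eventually be passed at $i^*$ with no deviation.

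The argument then splits into two parts. Index-based generation holds because attempts either fall back to KM's safe output (inheriting its validity) or pass a certificate whose design structurally implies $\lang{c_{j(k)}^{\tau_k}} \subseteq \trueL$. Accuracy at infinitely many steps holds because, once position $i^*$ permanently carries $\trueL$ in the chain, the dovetailing schedule targets $i^*$ at infinitely many later attempts, and after a burn-in the certificate at $i^*$ passes every time, so $\lang{i_{\tau_k}} = \trueL$ infinitely often. The main obstacle is engineering the certificate to be simultaneously permissive enough to pass at position $i^*$ infinitely often, yet restrictive enough to reject all positions $j < i^*$ (which correspond to proper supersets of $\trueL$) from some finite time onward. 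The difficulty here is structural: proper supersets of $\trueL$ remain consistent with $\seen_t$ forever, so consistency alone cannot distinguish them from $\trueL$, and one must leverage finer information from the evolving KM chain -- in particular, the inclusion witnesses between consecutive chain elements and their interaction with the growing sample $\seen_t$ -- to rule them out.
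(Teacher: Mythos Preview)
Your plan has a genuine gap at exactly the point you flag as the main obstacle: the certificate. The two ingredients you propose---persistence of position $j(k)$ in the chain, and ``dominance'' from a deeper certified position---do not separate the position of $\trueL$ from the positions of its proper supersets. Once the chain stabilizes, \emph{every} position at or above $i^*$ holds a fixed index forever (all supersets of $\trueL$ remain consistent and strictly critical), so persistence is passed by all of them. And ``some deeper position has been certified $\subseteq \trueL$ by the KM descent argument'' is not something the algorithm can verify: the KM guarantee is existential (there is some $\fint$), not effective, so you have no internal witness for when the deep position is actually safe. Your closing sentence about ``inclusion witnesses between consecutive chain elements'' is pointing in the right direction, but as written it is a hope rather than a mechanism. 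Note in particular that an inclusion witness in $\lang{c_j} \setminus \lang{c_{j+1}}$ can only be observed at the moment it enters $\seen_t$, and at that moment $\lang{c_{j+1}}$ is ejected from the chain---so the information is transient, not something a scheduled attempt at time $\tau_k$ can retroactively inspect.

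The paper's algorithm avoids the certificate problem altogether by being \emph{reactive} rather than scheduled. At each step it looks at the strictly critical chain from time $t-1$ and checks whether the new string $w_t$ lies in every chain element. If so, it behaves like KM and outputs the deepest position. If not, there is a unique cutoff $k_{t-1}$ with $w_t \in \lang{c_{t-1}(k_{t-1})}$ but $w_t \notin \lang{c_{t-1}(k_{t-1}+1)}$, and the algorithm outputs $c_{t-1}(k_{t-1})$. Validity follows because after a finite time $\trueL$ sits in the chain and $w_t \in \trueL$ forces the cutoff to be at or below $\trueL$'s position. Infinite accuracy follows from a changepoint argument: the chain position immediately \emph{after} $\trueL$ is always occupied by a proper subset of $\trueL$, which must eventually become inconsistent; at the step where its index first increases, $w_t$ lies in $\trueL$ but not in that subset, so the cutoff lands exactly on $\trueL$. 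This happens infinitely often. The mechanism you are missing is precisely this use of the adversary's current string $w_t$ as the trigger---it is the only observable event that distinguishes $\trueL$'s position from those of its proper supersets.
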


\xhdr{Density guarantees for index-based generation}
Now, let's consider how our density measures can be applied
to analyze algorithms for index-based generation in the limit.
A first natural question is the following.
Suppose an algorithm generates indices $i_1, i_2, i_3, \ldots$
with the property that for some $t \geq \fint$, we have 
$\lang{i_t} \subseteq \trueL$.
What can we say about the sequence of upper densities
of these languages in $\trueL$ --- that is, the sequence 
$d_1, d_2, d_3, \ldots$ where $d_t = \dup(\lang{i_t},\trueL)$?

We first observe that Theorem \ref{thm:acc-intro} has the following
immediate corollary: since infinitely often we have a language
$\lang{i_t}$ with upper and lower density equal to 1 in $\trueL$
(because $\lang{i_t}$ is $\trueL$), we have
\begin{cor}\label{cor:index-limsup-intro}
There is an algorithm that achieves index-based generation in the limit
and has 
$\displaystyle{\limsup_{t \rightarrow \infty} d_t = 1}$
for every instance.\footnote{Just to make sure the definitions
are kept straight, it is important to note
that when we talk about $\displaystyle{\limsup_{t \rightarrow \infty} d_t}$,
we are talking about a $\limsup$ of a sequence of numbers
(the $d_t$ values)
each of which are themselves the $\limsup$ of a sequence of numbers
(the fractions of the first $N$ elements of $\trueL$ that also
belong to $\lang{i_t}$, over all $N$).}
\end{cor}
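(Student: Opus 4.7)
The plan is to invoke Theorem \ref{thm:acc-intro} directly and then unpack what accuracy at infinitely many steps implies for the sequence $(d_t)$. I would first let $\ma$ denote the algorithm guaranteed by Theorem \ref{thm:acc-intro}, and fix an arbitrary instance specified by a collection $\coll$ and an enumeration $E$ of some $\trueL \in \coll$. By the theorem there is an infinite set $T \subseteq \mathbb{N}$ of time steps on which $\ma$ is accurate, meaning $\lang{i_t} = \trueL$ for every $t \in T$, and simultaneously $\ma$ achieves index-based generation in the limit.

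The key step is then to observe what the upper density $d_t = \dup(\lang{i_t}, \trueL)$ is at an accurate step. If we list the elements of $\trueL$ in order as $\ell_1, \ell_2, \ldots$, then for each $t \in T$ we have $\lang{i_t} \cap \{\ell_1, \ldots, \ell_N\} = \{\ell_1, \ldots, \ell_N\}$ for every $N$, so the ratio defining the upper density is identically $1$ along the sequence, and hence $d_t = 1$ for all $t \in T$. Since $T$ is infinite, the sequence $(d_t)_{t \in \mathbb{N}}$ contains an infinite subsequence equal to $1$.

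Finally, I would combine this with the trivial upper bound $d_t \le 1$, which holds for every $t$ directly from the definition of $\dup$ as a $\limsup$ of quantities in $[0,1]$. The infinite subsequence of $1$'s gives $\limsup_{t \to \infty} d_t \ge 1$, while the uniform upper bound gives $\limsup_{t \to \infty} d_t \le 1$, so equality holds. Since the instance was arbitrary, this establishes the claimed property for every instance.

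There is essentially no obstacle at this stage: all of the substantive work has already been done in Theorem \ref{thm:acc-intro}, where the difficulty is constructing an algorithm that oscillates back to the exact language $\trueL$ infinitely often without ever sliding up to a strict superset of $\trueL$ after time $\fint$. The corollary itself only requires translating accuracy into a density statement, which is immediate from the definitions.
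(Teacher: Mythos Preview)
Your proposal is correct and follows exactly the same approach as the paper: invoke Theorem~\ref{thm:acc-intro}, note that at each accurate step $\lang{i_t}=\trueL$ has upper density $1$ in $\trueL$, and conclude that $\limsup_t d_t = 1$. The paper states this in one sentence immediately before the corollary; your version simply spells out the trivial upper bound $d_t\le 1$ as well.
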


If the goal is to ensure non-trivial breadth in generation, however,
then we would also like something more: we
would like the sequence $d_t$ to remain uniformly bounded away from
0 beyond some finite time.
This is a requirement on the $\liminf$ rather than the $\limsup$;
we want 
$\displaystyle{\liminf_{t \rightarrow \infty} d_t > 0}$.
It turns out that is not achievable in every instance, since we can
show the following:

\begin{thm}\label{thm:vanishing-instance-intro}
There exist instances of the generation problem where {\it any}
algorithm achieving index-based generation in the limit must 
satisfy 
$\displaystyle{\liminf_{t \rightarrow \infty} d_t = 0}$.
\end{thm}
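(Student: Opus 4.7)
The plan is to exhibit, for each algorithm $\ma$ satisfying index-based generation in the limit, an explicit instance on which $\liminf_t d_t=0$. The collection $\coll$ and true language $K=\mathbb{N}$ are fixed; only the adversarial enumeration $E$ depends on $\ma$. Take $U=\mathbb{N}$ and
\[\coll:=\{\mathbb{N}\}\cup\{L_{F,k}:F\subset\mathbb{N}\text{ finite},\,k\ge 2\},\]
where $L_{F,k}:=F\cup\{mk:m\in\mathbb{N},\,mk>\max F\}$. Each $L_{F,k}$ is an infinite subset of $\mathbb{N}$ of density $1/k$ in the natural order that contains $F$, so $\coll$ is a countable collection of infinite languages as required by the KM model.

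Given $\ma$, I construct $E$ inductively in phases indexed by $n\ge 1$. At the start of phase $n$, let $S$ be the current sample (empty when $n=1$); the adversary enumerates the smallest element of $\mathbb{N}\setminus S$ to produce a new sample $S'$, sets $L_n:=L_{S',n+1}\in\coll$, and then enumerates the elements of $L_n\setminus S'$ in increasing order, one per step. Phase $n$ terminates at the first step $t_n$ at which $\lang{i_{t_n}}\subseteq L_n$, after which the construction proceeds to phase $n+1$.

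The crux is to show that each phase terminates in finite time. Suppose for contradiction that phase $n$ ran forever. Because $S'\subseteq L_n$, because phase $n$ adds only elements of $L_n$, and because it eventually exhausts $L_n\setminus S'$ in the natural order, the resulting infinite play would itself be a valid adversarial enumeration of the separate instance whose true language is $L_n$. The algorithm's index $i_t$ depends only on $S_t$, so its values are identical on this hypothetical $L_n$-instance and on our actual $\mathbb{N}$-instance. Since $\ma$ achieves index-based generation on the $L_n$-instance, there is a finite $T$ with $\lang{i_T}\subseteq L_n$, forcing $t_n\le T<\infty$ --- a contradiction. Hence $t_n<\infty$, and monotonicity of upper density under inclusion gives $d_{t_n}=\dup(\lang{i_{t_n}},\mathbb{N})\le \dup(L_n,\mathbb{N})=1/(n+1)$.

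Finally, the elements inserted at the starts of successive phases form a strictly increasing sequence of naturals, so every $m\in\mathbb{N}$ is eventually enumerated and $E$ is a valid enumeration of $K=\mathbb{N}$. Combining with the per-phase density bound gives $\liminf_t d_t\le \lim_n 1/(n+1)=0$, as desired. The main obstacle is the phase-termination step; it is handled by the Gold--Angluin-style observation that a non-terminating phase would itself present a valid $L_n$-enumeration to the algorithm, converting $\ma$'s \emph{global} validity across all of $\coll$ into the \emph{local}, per-phase commitment guarantee that drives the density bound.
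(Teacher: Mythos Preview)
Your proof is correct and takes a somewhat different, more direct route than the paper. The paper derives the result as a corollary of its general characterization (Theorem~\ref{thm:vanishing-char-intro}, equivalently the ``if'' direction of Theorem~\ref{thm:1}): it shows that whenever $\coll$ admits, for every $\epsilon>0$, an infinite perfect tower with respect to $K$ whose constituent languages all have upper density at most $\epsilon$ in $K$, then any validity-guaranteeing algorithm has index-based breadth zero; concrete collections such as Example~\ref{ex:1} then witness the theorem. You bypass the tower abstraction entirely and give a self-contained construction: the family $\{L_{F,k}\}$ is rich enough that at each phase the adversary can manufacture on the fly a language $L_n\in\coll$ of density $1/(n+1)$ containing the current sample, and your phase-termination step is exactly the ``pretend the true language is $L_n$'' coupling that also drives the paper's proof of the ``if'' direction. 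What the paper's approach buys is the full characterization of vanishing breadth and its quantitative extension (Theorem~\ref{thm:2}); what your approach buys is a short, elementary proof of the bare existence statement that does not require introducing the perfect-tower formalism.
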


Taking Corollary \ref{cor:index-limsup-intro} and
Theorem \ref{thm:vanishing-instance-intro} together,
it says that there are instances where
the best an algorithm for index-based generation in the limit
can do is to produce languges $\lang{i_t}$ whose upper densities
oscillate infinitely often between 1 and arbitrary small values.
This is a surprising consequence of this combination of results,
since it says that solutions which swing between these extremes
are in fact necessary for index-based generation that seeks
to maximize breadth.

\xhdr{A characterization of vanishing breadth}
Let us say that the instances described by 
Theorem \ref{thm:vanishing-instance-intro} have
{\em vanishing breadth} (in an index-based sense):
any algorithm achieving index-based generation in the limit must
satisfy
$\displaystyle{\liminf_{t \rightarrow \infty} d_t = 0}$.

Not every instance has vanishing breadth; some are much more tractable,
and allow for positive values of 
$\displaystyle{\liminf_{t \rightarrow \infty} d_t}$.
It is therefore interesting to ask if we can characterize 
the instances that have vanishing breadth, and it turns out that we can,
using a concept --- an {\em infinite perfect tower of languages} ---
that will play an important role in much of our analysis.

For a given instance of language generation, with a collection $\coll$
of candidate languages,
an infinite perfect tower is a sequence of languages
that ``converges,'' in a sense we will make precise next,
to a terminal language that is a proper
superset of all of them.  
More concretely, let
$\genlang_1, \genlang_2, \genlang_3, \ldots $
be a countable sequence of languages, each from $\coll$,
and let $\termlang$ be a language in $\coll$ such that
$\genlang_j \subsetneq \termlang$ for all $j$.
We say that $\genlang_k$ {\em fixes} a string $w \in \termlang$
if $w \in \genlang_i$ for all $i \geq k$, and $k$ is the minimum
index that has this property for $w$.
(That is, either $w \not\in \genlang_{k-1}$, or $k = 1$.)
Let $B_k$ denote the set of all strings that are fixed by $\genlang_k$.

Now we can write,
\begin{dfn}[infinite perfect tower]\label{def:perfecttower-intro}
We say that a sequence of languages 
$\genlang_1, \genlang_2, \genlang_3, \ldots $, each from $\coll$,
forms an {\em infinite perfect tower} with respect to a language
$\termlang \in \coll$ if 
\begin{itemize}
\item[(i)] $\genlang_j \subsetneq \termlang$ for all $j \geq 1$.
\item[(ii)] $\genlang_j$ fixes at least one string of $\termlang$, 
for all $j \geq 1$.
\item[(iii)] Every string of $\termlang$ is fixed by some 
language $\genlang_j$ in the sequence.
\end{itemize}
\end{dfn}
We call $\termlang$ the {\it terminal language} of this infinite perfect tower $(\genlang_1, \genlang_2, \dots)$. 
Intuitively, as we iterate through the languages 
$\genlang_1, \genlang_2, \genlang_3, \ldots $, every
string of $\termlang$ eventually appears and after some point stays forever;
and the sequence is non-redundant in that each language $\genlang_j$
represents the moment of permanent appearance for some string of $\termlang$.

An infinite perfect tower in which all the languages
$\genlang_j$ have small upper density in $\termlang$ is a useful object,
since we can show it allows an adversary to take the true language
to be $\trueL = \termlang$ and force any algorithm achieving 
index-based generation in the limit to guess languages of small
upper density infinitely often.
And conversely, we can show that if the algorithm from 
Theorem \ref{thm:acc-intro} guesses languages of small upper density
in $\trueL$ infinitely often, then we can extract from its guesses an
infinite perfect tower $\genlang_1, \genlang_2, \genlang_3, \ldots $
with respect to $\trueL$ in which all the languages
$\genlang_j$ have small upper density in $\trueL$.
When filled in, these arguments can be used to establish the following:

\begin{thm} \label{thm:vanishing-char-intro}
An instance of the generation problem, specified by
a collection $\coll$ of candidate languages and a true language $\trueL$,
has vanishing breadth (in an index-based sense) if and only if
for any $\epsilon > 0$, there is an infinite perfect tower 
$\genlang_1, \genlang_2, \genlang_3, \ldots $ in $\coll$ with
respect to $\trueL$ in which each $\genlang_j$ satisfies
$\dup(\genlang_j, \trueL) \leq \epsilon$.
\end{thm}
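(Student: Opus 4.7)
For this characterization I will prove the two implications separately, with the harder direction being the forward one where the main difficulty lies in enforcing property (iii) of the perfect tower. For the \emph{backward direction} ($\Leftarrow$), assume that for every $\epsilon>0$ an infinite perfect tower $\Gamma_1^{(\epsilon)},\Gamma_2^{(\epsilon)},\dots$ in $\coll$ with terminal $K$ and $\dup(\Gamma_j^{(\epsilon)},K)\le\epsilon$ exists. Given any algorithm $\ma$ that achieves index-based generation in the limit, I construct an enumeration $E$ of $K$ in phases so that at the end of phase $k$ the algorithm's chosen index $i_t$ satisfies $\dup(\Lambda_{i_t},K)\le 1/k$. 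In phase $k$, having already enumerated a finite set $S$, I invoke the tower for $\epsilon=1/k$ and use property (iii) to pick $j_k$ large enough that each string of $S$ is fixed by some $\Gamma_{j'}^{(1/k)}$ with $j'\le j_k$, so that $S\subseteq \Gamma_{j_k}^{(1/k)}$. I then append strings of $\Gamma_{j_k}^{(1/k)}\setminus S$ in some fixed enumeration of $\Gamma_{j_k}^{(1/k)}$, making the enumerated prefix indistinguishable from a prefix of an enumeration of $\Gamma_{j_k}^{(1/k)}$. Since $\Gamma_{j_k}^{(1/k)}\in\coll$ and $\ma$'s output is a function of the seen sequence, the validity of $\ma$ on the instance whose true language is $\Gamma_{j_k}^{(1/k)}$ forces, at some finite time $t_k$ within this phase, $\Lambda_{i_{t_k}}\subseteq\Gamma_{j_k}^{(1/k)}$, hence $d_{t_k}\le 1/k$. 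Between phases I dovetail in the next string of a fixed enumeration of $K$, which only requires slightly enlarging $j_{k+1}$ (again by (iii) applied to that string), guaranteeing that $E$ exhausts $K$. As $k\to\infty$ this gives $\liminf_t d_t=0$.

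For the \emph{forward direction} ($\Rightarrow$), fix $\epsilon>0$ and let $\ma$ be the algorithm from Theorem~\ref{thm:acc-intro}, valid in the limit and accurate at infinitely many steps $s_1<s_2<\cdots$ with $\Lambda_{i_{s_m}}=K$. By vanishing breadth, the adversary can pick an enumeration $E$ of $K$ under which $\liminf_t d_t=0$, yielding an infinite sequence of times $r_n$ with $\Lambda_{i_{r_n}}\subseteq K$ and $\dup(\Lambda_{i_{r_n}},K)\le\epsilon$. These are the candidate tower elements, and property (i) follows since the density bound $\epsilon<1$ forces proper containment. My plan for property (iii) is to strengthen the adversary's enumeration so that at each chosen low-density time the algorithm's output is forced to contain a prescribed finite prefix of $K$. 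Concretely, iterate in blocks: after the $m$-th accurate moment $s_m$, the adversary enumerates a finite list of strings of $K$ and then, between $s_m$ and $s_{m+1}$, there will be a low-density time at which the algorithm commits to some small $\Lambda$ which, by a simulation argument combining the fact that $\ma$ is a function of the seen sequence with the vanishing-breadth guarantee applied to this extended prefix, must contain those prescribed strings. Diagonalizing over $m$ while appending the next string of $K$ to the ``must contain'' list produces a sequence $\Gamma_1,\Gamma_2,\dots$ of low-density guesses in which every string of $K$ eventually lies in all subsequent terms. Property (ii) is then enforced by thinning: drop any $\Gamma_j$ that fails to fix a new string relative to its predecessors.

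The main obstacle is precisely this forced-containment step in the forward direction. Proving it cleanly will combine the accuracy guarantee of Theorem~\ref{thm:acc-intro} (to ``reset'' the internal representation back to $K$) with vanishing breadth (to force a subsequent descent to a language of density $\le\epsilon$), arranged as a diagonal construction indexed by successive prefixes of the adversary's enumeration. The topology introduced in the paper, where an infinite perfect tower is precisely the notion of convergence to $K$, supplies the correct formal language for manipulating these selections and organizing the diagonal.
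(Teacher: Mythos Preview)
Your backward direction is correct and essentially matches the paper's ``if'' argument: feed the algorithm a prefix consistent with some tower element $\Gamma_{j_k}$, invoke validity on the instance with true language $\Gamma_{j_k}$ to force a subset guess, then move on. The dovetailing to exhaust $K$ is handled correctly.

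The forward direction has a real gap, and it stems from missing a simple observation that would dissolve the obstacle you identify. You write that the ``main obstacle is precisely this forced-containment step,'' and you propose to \emph{construct} an enumeration $E'$ in blocks, invoking ``the vanishing-breadth guarantee applied to this extended prefix'' to get a low-density guess after each prescribed set of strings. But vanishing breadth only promises, for each validity-guaranteeing algorithm, the \emph{existence} of one enumeration with $\liminf d_t=0$; it does not say that every finite prefix of strings from $K$ can be extended to trigger a low-density guess. Your simulation argument would need exactly this prefix-robustness, and you have not established it (nor is it obvious how to, short of the characterization you are trying to prove).

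The fix is to drop the construction of $E'$ entirely and just use the enumeration $E$ that vanishing breadth hands you for the algorithm $\Acc$ of Theorem~\ref{thm:acc-intro}. The point you are missing is that at every step $t$ (past a finite burn-in), the guess $\lang{i_t}$ produced by $\Acc$ is \emph{consistent}, i.e.\ $S_t=\{w_1,\dots,w_t\}\subseteq \lang{i_t}$; this is immediate from the algorithm's definition, since it only outputs indices of strictly critical languages at the previous step and the case split guarantees $w_t$ is also contained. Hence for the subsequence of low-density times $r_1<r_2<\cdots$, each $\lang{i_{r_n}}$ contains $\{w_1,\dots,w_{r_n}\}$, and since $E$ enumerates all of $K$, every string of $K$ lies in $\lang{i_{r_n}}$ for all sufficiently large $n$. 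That is property~(iii) for free---no strengthening, no diagonalization, no appeal to accuracy moments. The paper packages this as the notion of a \emph{feasible sequence} and shows that any infinite subsequence of $\Acc$'s guesses (with $K$ itself removed) inherits the property that every tail has union $K$; from there, a thinning pass secures property~(ii). Your final sentence about thinning is right, but everything before it in the forward direction is working around a non-problem.
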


Furthermore, in Theorem \ref{thm:2} later in the paper, we generalize
this theorem to characterize, for a given instance of the generation problem,
the highest value of 
$\displaystyle{\liminf_{t \rightarrow \infty} d_t}$ that any
algorithm for index-based generation in the limit can achieve.

\subsection{Overview of Results: Element-Based Generation}
We now give an overview of our results on density as a measure
of breadth for element-based generation.
These results include the affirmative resolution of
Question ($\ast$) that we previewed early in this section.

Element-based generation differs from index-based generation in 
several important ways.
First, for an algorithm to achieve element-based generation in the limit,
there is nothing requiring it to name the language $\lang{i_t}$ that
it is generating from at time $t$;
all it needs to do is to output a string $\out_t$.
Second, the element-based structure of the output allows us to
ask different kinds of questions about density, even
for algorithms that are achieving element-based generation as
consequence of index-based generation.
In particular, consider the following contrast.
Our earlier results on index-based generation tracked the algorithm's
``internal state'' via the sequence of indices $i_t$ for the languages
it was generating from; as we saw there, the density of
$\lang{i_t}$ in $\trueL$ might oscillate wildly across
different time steps $t$.
For element-based generation, on the other hand, 
the most natural questions don't track the fluctuations in the 
algorithm's internal state, but instead study the properties
of a single countable set: $O(E,\ma)$, which 
we recall is the full set of 
strings in $\trueL$
generated by the algorithm $\ma$ over all the time steps
in its interaction with an adversary who is enumerating the sequence
of strings $E$.

The set $O(E,\ma)$ is a natural object to study because, by definition,
it constitutes the set of all possible 
outputs that a user of the algorithm would be able to see over
the course of the algorithm's execution.
Asking about the density of $O(E,\ma)$ in $\trueL$
is therefore a way to ask directly about the algorithms's breadth of outputs
as perceived by an entity interacting with it,
separately from any internal representations (via the indices $i_t$)
that it might be using to generate these outputs.
This different way of asking the question thus gives an algorithm
a chance to succed at element-based generation in ways that
can be obscured by guarantees for index-based generation:
the fact that an algorithm spends a lot of time generating
from languages $\lang{i_t}$ that have small upper density in $\trueL$
doesn't actually rule out the possibility that 
$O(E,\ma)$ could still in principle have large lower density in $\trueL$.

\xhdr{The density of $O(E,\ma)$}
This contrast in guarantees between index-based and element-based generation is in fact what we find.
We first provide an algorithm with a positive lower bound on
the upper density of $O(E,\ma)$,
and then an algorithm with a positive lower bound on
the lower density of $O(E,\ma)$ (the latter being a harder
guarantee to achieve, since the lower density can in general
be arbitrarily smaller than the upper density).

As a first observation, we note that even if the collection
$\coll$ consisted only of the single language $\trueL$ --- so that
there was no uncertainty about which language the adversary was
enumerating from --- we should still not expect $O(E,\ma)$ to have
lower or upper density above $1/2$.
To see why this is, 
consider an adversary that always enumerates the 
element of $\trueL$ with the lowest index among all 
strings in $\trueL$ that neither it nor the algorithm has produced yet.
Then among the first $N$ elements of $\trueL$ in order, for any $N$, 
the adversary will produce at
least half of the available strings of $\trueL$ before the algorithm does,
and so the density of the algorithm's outputs can be at most $1/2$.
And the algorithm can essentially achieve $1/2$ in this simple case
just by always generating the lowest-indexed string in $\trueL - \seen_t$.
So $1/2$ is a natural absolute upper bound on what we can expect
for the lower or upper density of $O(E,\ma)$ in $\trueL$.

Our first result matches this for upper density on arbitrary instances:

\begin{thm} \label{thm:element-upper-intro}
For every $c < 1/2$, 
there is an algorithm $\ma$ that achieves element-based
generation in the limit on every instance,
and has $\dup(O(E,\ma),\trueL) \geq c$.
\end{thm}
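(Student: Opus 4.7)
The plan is to run the algorithm from Theorem~\ref{thm:acc-intro} as a subroutine to obtain, at each step $t$, a hypothesis language $\lang{i_t}$ with $\lang{i_t}\subseteq\trueL$ for all $t\ge\fint$ and $\lang{i_t}=\trueL$ for infinitely many $t$. Our element-based algorithm $\ma$ outputs $o_t=\min_U(\lang{i_t}\setminus\widetilde{\seen}_t)$, where $\widetilde{\seen}_t=\seen_t\cup\{o_1,\ldots,o_{t-1}\}$ is the set of all strings observed so far by the adversary or by the algorithm itself. Validity is immediate: for $t\ge\fint$, $o_t\in\lang{i_t}\subseteq\trueL$, which gives element-based generation in the limit.

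For the density bound, the central observation is that at each \emph{accurate time} $t$ (where $\lang{i_t}=\trueL$), $o_t$ is the $U$-smallest element of $\trueL$ not yet observed. Since $|\widetilde{\seen}_t\cap\trueL|\le 2t$, this element lies within $\{\ell_1,\ldots,\ell_{2t+1}\}$ in canonical order. Listing the accurate times as $t_1<t_2<\cdots$, the corresponding algorithm outputs $\ell_{j^*(t_1)},\ell_{j^*(t_2)},\ldots$ form a strictly increasing sequence of indices in $\trueL$ (each is added to $\widetilde\seen$ after its round), so $|O(E,\ma)\cap\{\ell_1,\ldots,\ell_{2t_k+1}\}|\ge k$, and therefore
\[
\dup(O(E,\ma),\trueL)\;\ge\;\limsup_{k\to\infty}\frac{k}{2t_k+1}.
\]

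To drive this $\limsup$ above any fixed $c<1/2$, the set of accurate times $T_K=\{t_1,t_2,\ldots\}$ must have upper density at least $2c$ among the integers; since $c$ may be arbitrarily close to $1/2$, this upper density must be arbitrarily close to $1$. I would arrange this by augmenting the subroutine of Theorem~\ref{thm:acc-intro} with a \emph{holding} mechanism parameterized by $c$: each newly-proposed candidate hypothesis is adopted for up to $R=R(c)$ consecutive rounds, with the block aborted early only if an adversary enumeration ever violates the current hypothesis. Because every post-$\fint$ hypothesis is already a subset of $\trueL$, holding preserves validity; and because $\trueL$ itself is never contradicted, any holding block anchored at a truly accurate hypothesis runs to its full length~$R$, contributing $R$ accurate times to $T_K$.

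The main obstacle is to show that enough holding blocks are full-length accurate to push the upper density of $T_K$ above $2c$. A counting argument over intervals between successive candidate switches must amortize truncated non-accurate blocks against the full-length accurate ones, showing that at infinitely many time horizons the proportion of rounds within accurate blocks exceeds $2c$. This is where a refined use of the infinite-accuracy guarantee from Theorem~\ref{thm:acc-intro} becomes essential, together with careful control over how quickly the subroutine cycles through candidates when its hypothesis is invalidated by new adversary enumerations.
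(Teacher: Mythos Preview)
Your opening estimate is sound: at each accurate time $t$, outputting $\min_U(\trueL\setminus\widetilde{\seen}_t)$ lands within the first $2t$ elements of $\trueL$, and the outputs at accurate times are distinct, yielding $\dup(O(E,\ma),\trueL)\ge\limsup_k k/(2t_k+1)$. The gap is entirely in the second half, where you need the accurate times $T_K$ to have upper density arbitrarily close to $1$.

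Your holding mechanism with a fixed block length $R=R(c)$ cannot achieve this. The adversary controls two things simultaneously: how sparse the accurate moments of $\Acc$ are, and whether non-accurate holding blocks get truncated. Concretely, consider an instance with an infinite perfect tower $\lang{1}\subsetneq\lang{2}\subsetneq\cdots$ with terminal language $\trueL$. The adversary can dwell inside $\lang{j}$ for $M_j$ rounds (with $M_j$ chosen as large as it likes) before revealing a string in $\trueL\setminus\lang{j}$. During those $M_j$ rounds every held hypothesis is a subset of $\lang{j}\subsetneq\trueL$, none is ever contradicted, and so \emph{every} holding block runs its full length $R$ while being non-accurate. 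The accurate blocks, meanwhile, only occur at the changepoints between phases, and so contribute $O(R)$ rounds out of each $M_j$. Choosing $M_j\to\infty$ drives the upper density of $T_K$ to zero. The amortization you gesture at in the last paragraph---truncated non-accurate blocks paying for full-length accurate ones---simply doesn't apply, because nothing forces non-accurate blocks to be truncated.

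The paper's approach avoids this by replacing the fixed block length $R$ with an \emph{adaptive} criterion: it stays lazy on a language $\lang{i}$, outputting the smallest unused string of $\lang{i}$, until the ordered density of the output set within $\lang{i}$ reaches $c/2$. Crucially, during this lazy phase the algorithm records the ``missed interval'' of hypotheses that $\Acc$ would have produced, and after finishing it jumps to the \emph{maximal} (under inclusion) rich language in that missed interval. Since $\trueL$ is eventually the unique maximal element among $\Acc$'s hypotheses and is visited infinitely often, it eventually appears as that maximal missed language, at which point the algorithm is lazy on $\trueL$ itself and pushes the ordered density in $\trueL$ up to $c/2$. The adaptive stopping rule is what makes this work regardless of how long the adversary dwells in proper sublanguages; a fixed $R$ cannot substitute for it.
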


It turns out to be much more challenging to establish a lower bound
on the lower density of $O(E,\ma)$ in $\trueL$, and the following
theorem is the result in the paper that requires the most involved 
analysis:

\begin{thm} \label{thm:element-lower-intro}
There exists an absolute constant $c > 0$
and an algorithm $\ma$ that achieves element-based
generation in the limit on every instance,
and has $\dlow(O(E,\ma),\trueL) \geq c$.
\end{thm}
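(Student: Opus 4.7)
The plan is to build an algorithm whose outputs are not tethered to a single descending chain of hypotheses (which is what drives the KM algorithm's output density toward zero), but that instead prioritize strings lying early in $\trueL$'s enumeration while still preserving validity. As a preliminary observation, note the absolute upper bound $1/2$ on the achievable lower density: at time $t$ the adversary has already produced $t$ strings from $\trueL$ and the algorithm has produced at most $t$, so among the first $2t$ elements of $\trueL$'s enumeration at most half can belong to $O(E,\ma)$. The target constant $c$ therefore lies in $(0,1/2]$, and the goal is only to exhibit some positive value.

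At each step $t$ the algorithm maintains the set $J_t = \{j \leq t : \seen_t \subseteq \lang{j}\}$ of consistent indices, together with, for each $j \in J_t$, a pointer to the smallest-adversary-index element of $\lang{j}$ not yet in $\seen_t$ or in the algorithm's past outputs. It selects a distinguished index $j_t^{*} \in J_t$ using a KM-style minimum-consistent-index rule, augmented with a breadth-preserving clause: once a language is adopted as $\lang{j_t^*}$, it may be replaced only when a consistency witness permanently eliminates it, not merely because its pointer drifts. The algorithm then outputs the string pointed to by $j_t^*$. Validity follows from a stabilization argument analogous to the KM analysis: after some finite $\fint$ the language $\lang{j_t^*}$ is always a subset of $\trueL$, so all subsequent outputs lie in $\trueL - \seen_t$.

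The lower-density analysis requires uniform control over $|O(E,\ma)\cap\{\ell_1,\ldots,\ell_N\}|/N$ as $N\to\infty$. Since the pointer rule always targets the earliest available string in the currently adopted language, whenever $\lang{j_t^*}$ has even modest density in $\trueL$'s early prefix the output set accumulates a proportional share. The main obstacle is exactly the regime of Theorem \ref{thm:vanishing-instance-intro}: on vanishing-breadth instances no single consistent hypothesis can carry the density by itself, so the algorithm must rely on its outputs coming from many different hypotheses that collectively cover the early prefix of $\trueL$. I would formalize this via the infinite perfect tower structure of Definition \ref{def:perfecttower-intro}: the sequence of languages $\lang{j_t^*}$ that the algorithm cycles through generates an infinite perfect tower with respect to $\trueL$, whose fixing sets $B_k$ partition $\trueL$; a charging argument on the $B_k$ together with the early-string pointer rule should show that a constant fraction of every prefix of $\trueL$ is eventually output. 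Establishing this collective-coverage lemma --- and arranging the selection rule so that the cycling realizes it without prior knowledge of $\trueL$ --- is where I expect the bulk of the technical difficulty to lie.
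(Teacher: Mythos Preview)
Your proposal has a genuine gap in the lower-density argument (I set aside the validity concern: a literal minimum-consistent-index rule fails whenever some $\lang{1} \supsetneq \trueL$ stays consistent forever, but presumably you intend the actual KM critical-chain rule). The real obstacle is not merely that individual hypotheses $\lang{j_t^*}$ may be thin; it is that the adversary can build \emph{nested} infinite perfect towers of arbitrary depth---towers whose terminal languages themselves form a tower with respect to a yet larger language, iterated indefinitely---and can force any algorithm that outputs only from its current hypothesis to dwell arbitrarily long at each thin level before the hypothesis changes. Your early-string pointer then produces only elements of the current thin $\lang{j_t^*}$; the $B_k$ do partition $\trueL$, but the adversary controls how long you spend inside each thin $\lang{j_t^*}$ relative to the prefix length $N$, so no uniform bound on $|O \cap \{\ell_1,\dots,\ell_N\}|/N$ follows. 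The collective-coverage lemma you hope for is false for this algorithm on such instances: coverage is eventual but not at a uniform rate, which is exactly the distinction between upper and lower density.

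The ingredient the paper adds, and that your outline lacks, is a \emph{fallback} mechanism: whenever the identified language changes, the algorithm deliberately outputs strings not from the current thin hypothesis but from a strictly larger ancestor, selected via a dynamically maintained tree on the strictly critical languages; the number of extra ancestor strings is charged (via tokens loaded into a fallback string list) against the depth just descended in the critical chain. The density proof then shows that any long interval of $\trueL$ missed by $O$ forces a sequence of ascents through ancestor levels, each pre-paid by an earlier descent that filled the fallback list with small-index strings, yielding $c = 1/8$. This overshoot---generating from languages strictly larger than the current hypothesis---is what converts eventual coverage into a uniform prefix bound, and your breadth-preserving clause (which governs only when to switch hypotheses, not what to output) cannot supply it.
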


We are able to establish $c = 1/8$ as a concrete value
for the constant in Theorem \ref{thm:element-lower-intro},
but there is no indication that this is tight and our proof is not intended to optimize the value of $c$.

\paragraph{Proof idea and Topological point view.}

To give a brief sense for how the proof of Theorem \ref{thm:element-lower-intro} goes, we start by recalling the intuition for Theorem \ref{thm:vanishing-char-intro}, in which the adversary can cause difficulties for an algorithm by following the sequence of languages in an infinite perfect tower with respect to some terminal language $\termlang$ (where the infinite perfect tower consists entirely of languages that have low upper density in $\termlang$). But the adversary can do more, and this is where the arguments become more involved. In particular, for some instances it is possible to construct an infinite sequence of infinite perfect towers whose terminal languages themselves form an infinite perfect tower (for an illustration of a variant of this construction, see Figure \ref{fig:tree2} in Section \ref{sec:element:lower}). We can then iterate a construction like this in a tree structure: each node of the tree corresponds to a language $\lan$, and the (countable) set of children of $\lan$ are the languages in an infinite perfect tower whose terminal language is $\lan$. For some instances, this tree structure can be built up to an arbitrary height, and this type of structure poses particular challenges for our analysis.

Given that we view an infinite perfect tower as a kind of convergent
sequence of languages, this iterated adversary strategy is analogous
to constructions in topology in which we find the limit points of a space,
then the limits of those limit points, then the limits of those
limit points, and so forth.
We show that this analogy can be made fully precise by
defining a topology on the languages in $\coll$. This viewpoint is useful as it allows us to apply certain topological theorems and properties on the space of languages. 

We therefore define a topological space $(\coll, \mathcal{T})$ whose underlying point set consists of the languages in $\coll$ and whose open sets are constructed as follows.
Given that the crucial question for generation in the limit
is to find languages $\lan' \in \coll$ contained in larger languages $\lan \in \coll$
while fully contains a finite set $F$ of strings seen so far,
we define basic open sets in a form that reflects this. For each language $\lan \in \coll$ and each finite subset $F$ of the ground set of all the possible strings (which is again a countable set), 
define a basic open set as
\[
U_{\lan,F} = \{\lan' \in \mathcal{X} \mid F \subseteq \lan' \subseteq \lan\}.
\]
Let the collection of sets \( U_{\lan,F} \), ranging over all languages \( \lan \in\mathcal{X} \) and all finite sets \( F  \subset \mathbb{N} \), serve as a basis for the open sets. These basis define a topology on \( \mathcal{X} \).  
This topological space is first-countable and Hausdorff. Furthermore, this space is also equipped with a partial order defined by the subset relation on the languages in $\coll$, so this space is an object known as a {\em partially ordered space}, or {\em pospace}, which combines a topological structure with an underlying partial order \cite{gierz-pospace}.

We find that this topology both clarifies some of our basic constructions ---
for example, there exists an infinite perfect tower with respect to
a language $L \in \coll$ if and only if $\lan$ is a limit point in
this topology (as we will prove in Section \ref{sec:element:lower}) --- and also explains the
iterated adversarial strategy described above in some situations, in which languages are stratified
into different levels;
we find that these levels are in fact, in important cases, closely related to 
 the different {\em Cantor-Bendixson derivatives} of $\coll$
under the topology defined by our open sets \cite{settheory}.

Now, if we think of these levels as leading downward to increasingly restrictive languages, which are therefore more likely to be contained in $\trueL$, then the original idea behind the KM algorithm can be viewed as a process of proactively descending down the levels, in order to ensure that the strings being generated will eventually lie in the true language $\trueL$. However, as discussed earlier, if we need to guarantee that the set of all generated strings has a reasonably large density in the true language, we also need to try to be aggressive in terms of guessing the correct language, and we will see that this in effect involves the opposite process, to regularly go back up through the levels.
This comes with the danger of ``overshooting'' our target, as in the discussion around Theorem \ref{thm:acc-intro} above: we need to make sure that we don't overshoot infinitely often, going too high in the levels and no longer remaining inside the true language $\trueL$. 

The Cantor-Bendixson derivatives will help us ensuring that we do not overshoot often. Even once we have the Cantor-Bendixson derivative level of each language, however,
there are multiple further issues in the analysis that need to be handled in order to reason about the possible
adversary strategies. More serious complications will  happen in the case when the Cantor-Bendixson rank of $(\coll, \mathcal{T})$ is infinite or when the \emph{perfect kernel} of the topological space is non-empty. In the latter case, the limit points of the whole set could just be the whole set itself. (Since $\coll$ is infinite, it is completely possible that every language is a limit point in the space $(\coll, \mathcal{T})$). In such a situation, there is no clear way for how we should even build the hierarchical levels as the derivatives are not providing any information, and thus it is unclear how to define these ``levels" and also which two languages should be on adjacent ``levels". We defer these details to the full proof
of the theorem at the end of the paper.

With this as our overview, we now proceed with the full details of
the results and their proofs, beginning in the next section.


\section{A Validity-Guaranteed Algorithm that is Accurate Infinitely Often}

We now begin presenting the results and their proofs in full detail, starting with index-based generation. 
Recall that  we have a countable list of
candidate languages $\coll = \{\lang{1}, \lang{2}, \dots\}$, where each $\lang{i}$ in $\coll$
is a subset of some countable ground set $U$, and the languages are listed by some pre-determined order;
an adversary chooses one of these languages $\trueL \in \coll$,
and it enumerates the strings of $\trueL$ one by one in some order determined by the adversary.
Let $\seen_t$ be the finite set of all
strings that the adversary has enumerated up 
through time step $t$.

In {\em index-based generation}, we consider an algorithm that tries to guess which language it should be generating from;
in step $t$, the algorithm outputs $i_t$ (intending to generate
from $\lang{{i_t}} \in \coll$), 
and it achieves index-based generation in the limit if if there
is some $\fint$ such that for all steps $t \geq \fint$, we have
$\lang{{i_t}} \subseteq \trueL$.
The KM algorithm \cite{KM24} achieves index-based generation in the limit, and as a result it also achieves {\em element-based generation} in the limit, since it can simply choose any string $\out_t \in \lang{i_t} - \seen_t$ and then we have $\out_t \in \trueL - \seen_t$ once $t \geq \fint$.

In index-based generation, we say that an algorithm is \emph{accurate} at time step $t$ if $\lang{{i_t}} = \trueL$. Unfortunately, in many examples, the KM algorithm is accurate for only a finite number of time stamps. This is because the KM algorithm seeks out languages $\lang{i_t}$ to guess that tend in general to continuously shrink, ensuring that after some finite time, at each step, the suggested language is a subset of $\trueL$. Our goal is to show the following unexpected result: we can design an algorithm that achieves index-based generation in the limit while also guaranteeing that the algorithm will be accurate infinitely often. 
This is the content of the Theorem \ref{thm:acc-intro} from Section \ref{sec:overview}, which we restate here.

\begin{thm}\label{thm:acc}
There is an algorithm that can guarantee 
index-based generation in the limit and also be accurate
in an infinite sequence of time steps.
\label{stmt:inf-accuracy}
\end{thm}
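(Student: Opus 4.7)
The plan is to build on the KM algorithm, which at each time $t$ maintains a descending chain of consistent languages $\lang{c_1^{(t)}} \supseteq \lang{c_2^{(t)}} \supseteq \cdots$ with the property that for all sufficiently large $t$, the true language $\trueL$ appears at some fixed position $k^\ast$ in the chain, with strict supersets of $\trueL$ above position $k^\ast$ and strict subsets of $\trueL$ below. KM outputs the deepest position reached so far, which eventually slides strictly past $k^\ast$ and therefore achieves accuracy only at finitely many time steps.

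To remedy this, I would introduce an adaptive ``target'' variable $p_t$, initialized to $p_0=1$, which serves as the algorithm's guess for $k^\ast$. At each time step $t$, the modified algorithm outputs $c_{p_t}^{(t)}$ whenever a stability condition on position $p_t$ is met (an ``accuracy attempt''), and otherwise falls back on KM's deepest index $c_{m_t}^{(t)}$ (a ``safety fallback''). The variable $p_t$ is updated upward in response to observable triggers on the chain, for instance whenever a new index gets stably inserted into the chain above position $p_t$, indicating that $\lang{c_{p_t}^{(t)}}$ is a strict superset of $\trueL$ rather than $\trueL$ itself.

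I would then verify the two desired properties. For the subset guarantee, I would argue that $p_t$ stabilizes at exactly $k^\ast$ in finite time: at those times the accuracy-attempt outputs are indices $j$ with $\lang{j} = \trueL$ (trivially a subset of $\trueL$), while the KM fallback outputs $c_{m_t}^{(t)}$ are already subsets of $\trueL$ by KM's original analysis, so every output past a finite time lies in $\trueL$. For accuracy infinitely often, once $p_t = k^\ast$ is permanent, the stability condition at position $k^\ast$ is met at infinitely many time steps (because the chain entry $c_{k^\ast}^{(t)}$ stabilizes to $z$ with $\lang{z}=\trueL$), producing an output $\lang{c_{k^\ast}^{(t)}} = \trueL$ infinitely often.

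The main obstacle is designing the increment rule for $p_t$ so that it converges to \emph{exactly} $k^\ast$, neither overshooting into positions where we lose accuracy forever nor getting stuck at some $p < k^\ast$ whose language strictly contains $\trueL$ (which would violate the subset guarantee). From $\seen_t$ alone the algorithm cannot directly detect whether $\lang{c_{p_t}^{(t)}} = \trueL$, since any strict superset of $\trueL$ is observationally indistinguishable from $\trueL$ under the adversary's enumeration. The proof must therefore exploit the dynamical structure of the KM chain: at positions strictly below $k^\ast$ the chain entries exhibit a characteristic pattern of updates as further candidate supersets are discovered or as previously consistent descendants get ruled out, whereas the entry at position $k^\ast$ eventually never changes. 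Translating this asymmetry into a concrete increment trigger that advances $p_t$ past every $p < k^\ast$ but not past $k^\ast$ itself, and handling the boundary cases where the chain above $k^\ast$ is short or where the eligible index values conspire to mimic stability below $k^\ast$, is the technical heart of the argument.
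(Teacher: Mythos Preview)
Your proposal has a genuine gap at precisely the point you identify as the technical heart. After a finite time the chain entries at \emph{all} positions $1,\dots,k^\ast$ stabilize permanently (each is a fixed superset of $\trueL$ that remains consistent forever), so the only observable asymmetry between $p<k^\ast$ and $p=k^\ast$ lies one step deeper, at position $p+1$: it stabilizes in the former case and churns in the latter. Any increment trigger you can implement must therefore amount to a finite-horizon stability test on position $p_t+1$. But the adversary controls the enumeration order and can postpone enumerating any string outside $\lang{c_{k^\ast+1}^{(t)}}$ for as many steps as it likes, so position $k^\ast+1$ can be made to look stable for arbitrarily long; your trigger then fires at $p_t=k^\ast$ just as readily as at $p_t=k^\ast-1$, forcing $p_t$ to overshoot. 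With an increment-only rule there is no recovery, and accuracy holds only finitely often. Worse, while $p_t$ is still below $k^\ast$ (as it must be initially), your accuracy attempts output a strict superset of $\trueL$, so validity also fails unless you can guarantee $p_t\ge k^\ast$ from some finite time on---which is exactly the convergence you have not established. Adding a decrement rule (decrement when position $p_t$ itself becomes inconsistent) can salvage the idea, but that yields oscillation rather than convergence and requires a different argument than the one you sketch.

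The paper takes an entirely different route that never tries to learn $k^\ast$. At each step it inspects the chain of strictly critical languages from time $t-1$ and checks which of them contain the newly revealed string $w_t$; if $w_t$ fails to lie in some of them, it outputs the index of the deepest one that still contains $w_t$ (otherwise it outputs the usual KM index). Because the chain is nested and $w_t\in\trueL$, that deepest position is always at least $k^\ast$, giving validity. For accuracy, the language at position $k^\ast+1$ is a strict subset of $\trueL$ and must become inconsistent infinitely often; at each such moment $w_t$ lies in position $k^\ast$ but not in position $k^\ast+1$, so the output is exactly $\trueL$. The idea you are missing is to use the currently enumerated string $w_t$ as a direct, per-step probe of where the superset/subset boundary sits in yesterday's chain, rather than attempting to estimate that boundary over time.
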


The key point is the combination of these two guarantees.
Index-based generation in the limit on its own is achieved by
the KL algorithm in \cite{KM24}.
And if we simply wanted an algorithm that was accurate in an
infinite sequence of time steps, it could be oblivious to
the adversary's
enumeration and just list the indices in the ``triangular'' order
1, 1, 2, 1, 2, 3, 1, 2, 3, 4, ...\footnote{Note
however that no such oblivious listing of indices can 
work for all collections $\coll$
if index-based generation in the limit is required, since 
the algorithm's listing must contain languages $\lang{i}, \lang{j} \in \coll$
with $\lang{i} \not\subseteq \lang{j}$ that are both listed infinitely often,
and then the adversary can simply declare that the true language
$K = \lang{j}$.}
We also note that the KL algorithm  does not achieve
the combination of guarantees in Theorem \ref{stmt:inf-accuracy}.

We describe an algorithm $\Acc$ that achieves Theorem
\ref{stmt:inf-accuracy} by starting with some definitions.
First, the true language $\trueL$ might appear multiple times
in the sequence of languages in $\coll$;
let $\lang{z}$ be its earliest appearance in terms of the language ordering in $\coll$.
(That is, $\lang{i} \neq K$ for all $i < z$.)

A language $\lang{n}$ is {\em consistent} at step $t$ if
$\seen_t \subseteq \lang{n}$.
Define $\res{\coll}{n} = \{\lang{1}, \lang{2}, \dots, \lang{n}\}$, i.e., the first $n$ languages in $\coll$.

\begin{dfn}\label{dfn:sc}
We say a language $\lang{n}$ is {\em strictly critical} 
at step $t$ if
$\lang{n}$ is consistent with $\seen_t$, and for every language
$\lang{i} \in \res{\coll}{n}$ that is consistent with $\seen_t$,
we have $\lang{n} \subsetneq \lang{i}$.
\end{dfn}
This is almost the same as the definition of a critical language from 
\cite{KM24}, except that we require $\lang{n}$ to be a proper subset of 
each consistent $\lang{i}$ with $i < n$, rather than simply a subset.

In general, whether a language is strictly critical can change 
over different time steps $t$, but
there is always at least one strictly critical for every time step $t$,
since there is always at least one consistent language at time $t$,
and the first consistent language in the ordering of $\coll$ must be
strictly critical.

We can also show

\begin{claim}\label{claim:consist}
    If a language $\lang{n}$ is strictly critical in step $t$, then $\lang{n}$
will continue be strictly critical in every step $r > t$
for which it is consistent at step $r$.
\label{stmt:stay-critical}
\end{claim}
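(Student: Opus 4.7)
The plan is to exploit the monotonicity of the sample sequence $\seen_t \subseteq \seen_r$ for $r > t$, which implies that the collection of languages consistent with the sample can only shrink (never grow) as time progresses. Since strict criticality of $\lang{n}$ at a given step is a condition that quantifies over those $\lang{i} \in \res{\coll}{n}$ which happen to be consistent at that step, shrinking the quantified set can only make the condition easier to verify, not harder. The strict containment $\lang{n} \subsetneq \lang{i}$ itself is a property of the languages — independent of which $\seen_\cdot$ is in hand — so once it holds for a particular $\lang{i}$ at step $t$ it continues to hold at step $r$ unchanged.

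Concretely, I would fix an arbitrary $r > t$ at which $\lang{n}$ is consistent, and verify the two clauses of Definition~\ref{dfn:sc} at step $r$. Clause (i), consistency of $\lang{n}$ at step $r$, is the hypothesis of the claim. For clause (ii), I would take any $\lang{i} \in \res{\coll}{n}$ that is consistent with $\seen_r$, so $\seen_r \subseteq \lang{i}$. Because enumerations accumulate, $\seen_t \subseteq \seen_r \subseteq \lang{i}$, and so $\lang{i}$ is also consistent at step $t$. Strict criticality of $\lang{n}$ at step $t$ then delivers $\lang{n} \subsetneq \lang{i}$ — exactly the inequality required at step $r$. Since $\lang{i}$ was arbitrary, $\lang{n}$ is strictly critical at step $r$.

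I do not expect a serious obstacle here: the claim is essentially a monotonicity observation. The only point that deserves care is noting that the quantification set $\res{\coll}{n} = \{\lang{1},\dots,\lang{n}\}$ is time-independent, so the only thing that changes from step $t$ to step $r$ is \emph{which} members of $\res{\coll}{n}$ qualify as consistent — and this set shrinks. In particular, $\lang{n}$ never has to ``compete'' against any new consistent language at a later time, which is precisely why strict criticality is preserved.
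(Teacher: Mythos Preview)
Your proposal is correct and follows essentially the same approach as the paper's proof: both hinge on the observation that any $\lang{i}\in\res{\coll}{n}$ consistent at step $r$ must (since $\seen_t\subseteq\seen_r$) also have been consistent at step $t$, whence $\lang{n}\subsetneq\lang{i}$ by strict criticality at step $t$. Your write-up is simply a more expansive rendering of the paper's one-sentence argument.
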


\proof{
If $\lang{n}$ is consistent at step $r$, then for every $i < r$ 
for which $\lang{i}$ is consistent at step $r$, we have
$\lang{n} \subsetneq \lang{i}$ since $\lang{i}$ must also have been consistent at step $t$.
\endpf
}

We next show that the true language eventually becomes strictly
critical; this is a small adaptation of a result from \cite{KM24},
which showed the same thing for critical languages,
rather than strictly critical ones.

\begin{lem}\label{lem:Ksc}
There exists a time step $\zt$ such that for all $t \geq \zt$,
the language $\lang{z}$ is strictly critical at step $t$.
\label{stmt:true-strictly-critical}
\end{lem}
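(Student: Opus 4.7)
The plan is to show that after finitely many steps, every language $\lang{i}$ with $i<z$ is either permanently inconsistent or is a proper superset of $\lang{z}$, which together with the trivial consistency of $\lang{z}$ itself will yield strict criticality. The key asset is that there are only finitely many indices below $z$, so we only need a finite ceiling of ``bad times.''

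First, I would observe that $\lang{z}$ is consistent at every time step: since $\seen_t \subseteq K = \lang{z}$ for all $t$, the consistency part of Definition \ref{dfn:sc} is automatic for $\lang{z}$. So the entire content of the lemma is about ensuring that no $\lang{i}$ with $i<z$ fails the strict containment $\lang{z}\subsetneq\lang{i}$ while remaining consistent. I would then split the indices $i<z$ into two disjoint classes based on a time-independent set-theoretic property:
\begin{itemize}
\item[(a)] Those with $\lang{z}\subseteq \lang{i}$. By the choice of $z$ as the first index with $\lang{i}=K$, we have $\lang{i}\neq\lang{z}$, hence $\lang{z}\subsetneq \lang{i}$. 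These indices pose no obstacle at any time step.
\item[(b)] Those with $\lang{z}\not\subseteq \lang{i}$. Then there exists a witness string $w_i\in \lang{z}\setminus \lang{i}$. Since the adversary enumerates all of $\trueL=\lang{z}$, there is a first time $\tau_i$ at which $w_i\in \seen_{\tau_i}$; from then on $\seen_t\not\subseteq \lang{i}$, so $\lang{i}$ is permanently inconsistent.
\end{itemize}

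Now I would set $\zt := 1+\max\{\tau_i : i<z, \ i\text{ is of type (b)}\}$, which is finite because the maximum is taken over a subset of the finite index set $\{1,\dots,z-1\}$ (with the convention $\zt:=1$ if type (b) is empty). For any $t\geq \zt$ and any $i<z$ with $\lang{i}$ consistent at step $t$, $i$ must be of type (a), so $\lang{z}\subsetneq \lang{i}$. Combined with the fact that $\lang{z}$ is itself consistent at step $t$, Definition \ref{dfn:sc} applied to $n=z$ is satisfied, so $\lang{z}$ is strictly critical at step $t$.

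There is really no ``hard step'' here; the only thing to be careful about is the distinction between the two types of inconsistency behavior for indices $i<z$, and the use of the minimality of $z$ to rule out equality $\lang{i}=\lang{z}$ in case (a). The proof is a direct adaptation of the analogous result for (non-strict) critical languages in \cite{KM24}, with the strict-subset requirement accommodated precisely by invoking the minimality of $z$.
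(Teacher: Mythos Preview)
Your proof is correct and follows essentially the same approach as the paper: partition the indices $i<z$ according to whether $\lang{z}\subseteq\lang{i}$, use the enumeration of $K$ to kill off the non-supersets in finite time, and invoke minimality of $z$ to upgrade subset to proper subset for the survivors. If anything, your write-up is slightly more careful than the paper's (which only explicitly names the ``strict subset'' subcase before jumping to the conclusion about supersets), but the underlying argument is identical.
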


\proof{
For any language $\lang{i}$ with $i < z$, if $\lang{i}$ is a strict subset of $\lang{z}$, then there must be a time when the adversary output some string in $\lang{z} \setminus \lang{i}$. Since there are in total $z-1$ languages come earlier than $\lang{z}$ in the original language ordering, there must be a finite time $\zt$ after which all the languages $\lang{i}$ for $i<z$ that remains consistent after time $\zt$ are supersets of $\lang{z}$.

We claim that $\lang{z}$ is also strictly critical for all $t \geq \zt$.
This is because $\lang{z} \subseteq \lang{i}$ for all consistent 
$\lang{i}$ with $i < z$; since such an $\lang{i}$ is not equal to $\lang{z}$,
it must be a proper superset of $\lang{z}$, 
and hence $\lang{z}$ is strictly critical.
\endpf
}

Now, at time $t$, consider the sequence of all strictly critical languages,
$\lang{{c_t(1)}}, \lang{{c_t(1)}}, \lang{{c_t(3)}}, \ldots$,
where $c_t(j) < c_t(j+1)$.
(Note that this sequence could be either finite or infinite, and
our algorithm will need to implicitly handle both cases.)
By the definition of strict criticality, this sequence is nested by
proper inclusion, in that $\lang{{c_t(j+1)}} \subsetneq \lang{{c_t(j)}}$ for all $j$. So this sequence forms a proper descending chain of languages under set inclusion. 
Let $h_t$ be maximum $j$ such that $c_t(j) \leq t$.

We define $i_1$ arbitrarily, and then for $t > 1$, we define $i_t$ as 
follows.
\begin{itemize}
\item[(a)] If $w_t$ belongs to all the strictly critical languages from 
time $t-1$ --- that is, if $w_t \in \lang{{c_{t-1}(j)}}$ for all $j$ ---
then we define $i_t = c_{t-1}(h_{t-1})$.
\item[(b)] Otherwise, $w_t$ does not belong to all
the strictly critical languages from time $t-1$.
Because the strictly critical languages are nested by proper inclusion,
this means there is some index $k_{t-1}$ such that
$w_t \in \lang{{c_{t-1}(j)}}$ for all $j \leq k_{t-1}$ and 
$w_t \not\in \lang{{c_{t-1}(j)}}$ for all $j > k_{t-1}$.
We define $i_t = c_{t-1}(k_{t-1})$. If no such $k_{t-1}$ exists, then simply let $i_t = c_{t-1}(h_{t-1})$. This edge casethat $k_{t-1}$ does not exist will never appear after some finite amount of time because of Claim \ref{stmt:stay-critical}.
\end{itemize}
In each step, the algorithm guesses $i_t$ as its index for the language
$\lang{{i_t}}$. The crucial part of this algorithm is that at time $t$, the judgement is based on the sequence of strictly critical languages at time $t-1$ instead of time $t$.

First we show

\begin{prop}
The algorithm achieves index-based generation in limit.
\label{stmt:index-based-gen-limit}
\end{prop}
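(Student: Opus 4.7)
The plan is to show that after some finite time $T$, every index $i_t$ produced by $\Acc$ satisfies $\lang{i_t} \subseteq K$. The two ingredients I combine are Lemma \ref{lem:Ksc}, which guarantees a time after which $\lang{z}$ is always strictly critical, and the fact that the strictly critical languages at any given step form a properly descending chain. Together these give $\lang{z}$ a well-defined position $j^*$ in the chain from some time onward, and any language at or below that position is automatically a subset of $\lang{z}=K$.

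Concretely, I would first apply Lemma \ref{lem:Ksc} to fix $\zt$ such that $\lang{z}$ is strictly critical at every step $r \geq \zt$, and then set $T = \max(\zt + 1,\, z + 1)$. For any $t \geq T$, the strict-criticality chain at time $t-1$,
\[
\lang{c_{t-1}(1)} \supsetneq \lang{c_{t-1}(2)} \supsetneq \cdots,
\]
contains $\lang{z}$ at some position $j^*$ (so $c_{t-1}(j^*) = z$). Since $z \leq t-1$, the maximum index $h_{t-1}$ with $c_{t-1}(h_{t-1}) \leq t-1$ automatically satisfies $h_{t-1} \geq j^*$.

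The main step is then a short case analysis following the definition of $\Acc$. In case (a), $i_t = c_{t-1}(h_{t-1})$, and the nesting of the chain together with $h_{t-1} \geq j^*$ gives $\lang{i_t} \subseteq \lang{c_{t-1}(j^*)} = K$. In case (b), $i_t = c_{t-1}(k_{t-1})$, where $k_{t-1}$ is the threshold index with $w_t \in \lang{c_{t-1}(j)}$ for all $j \leq k_{t-1}$; since $w_t \in K = \lang{c_{t-1}(j^*)}$ and the chain is nested, $w_t$ lies in every $\lang{c_{t-1}(j)}$ with $j \leq j^*$, which forces $k_{t-1} \geq j^*$ and hence $\lang{i_t} \subseteq K$ again. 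The degenerate subcase of (b) in which no $k_{t-1}$ exists cannot arise once $t \geq T$, because $w_t$ is then guaranteed to sit in at least the first $j^*$ languages of the chain.

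I do not anticipate a serious obstacle: the argument is essentially careful bookkeeping, and its core is the single observation that once $\lang{z}$ joins the strictly critical chain, the algorithm's rule always picks an index at or below position $j^*$, and hence inside $K$. The only mild subtlety is ruling out the degenerate branch of case (b), which the choice of $T$ handles automatically via Claim \ref{claim:consist}.
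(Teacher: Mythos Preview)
Your proposal is correct and essentially identical to the paper's proof: both fix the same threshold $\fint = 1 + \max(z,\zt)$, use Lemma~\ref{lem:Ksc} to place $\lang{z}$ in the strictly critical chain at step $t-1$, and then dispatch cases (a) and (b) by the nesting of that chain. The only cosmetic difference is that you phrase the comparison in terms of the chain position $j^*$ while the paper compares $\coll$-indices directly; your closing reference to Claim~\ref{claim:consist} is unnecessary (the degenerate branch of (b) is already ruled out by $w_t \in \lang{c_{t-1}(j^*)}$), but this does not affect correctness.
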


\proof{
In the enumeration of $\lang{z}$, we know by Lemma
\ref{stmt:true-strictly-critical} that there is a step $\zt$
such that for all $t \geq \zt$, the language $\lang{z}$ is 
strictly critical at step $t$.
Let $\fint = 1 + \max(z,\zt)$.
For each $t \geq \fint$, we have $z < t$,
and the language $\lang{z}$ is equal to 
one of the strictly critical languages 
$\lang{{c_{t-1}(j)}}$ in the previous step $t-1$.

If the algorithm uses case (a) in step $t$, then since 
$z \leq t-1$ and $c_{t-1}(h_{t-1})$ 
is the maximum index in $\coll$ of a critical language
less than or equal to $t-1$, we must have $z \leq c_{t-1}(h_{t-1})$.
Therefore $\lang{{i_t}} = \lang{{c_{t-1}(h_{t-1})}} \subseteq \lang{z}$ as required.

If the algorithm uses case (b) in step $t$, then since $w_t \in \lang{z}$,
we must have $z \leq c_{t-1}(k_{t-1})$.
Since the strictly critical languages are nested by proper inclusion,
we therefore have
$\lang{{i_t}} = \lang{{c_{t-1}(k_{t-1})}} \subseteq \lang{z}$.
\endpf
}

The following result now establishes Theorem  \ref{stmt:inf-accuracy},
by showing that from any point in time, there will always be
a future step in which it is accurate.

\begin{prop}
For every step $t$, there is some step $r \geq t$ in which 
the algorithm is accurate.
\label{stmt:alg-later-accurate}
\end{prop}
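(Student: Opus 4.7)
The plan is to identify two favorable structural situations at step $r$ that each directly yield accuracy, and then rule out the complementary ``bad'' situation by contradiction. I will fix $t$ large enough that Lemma~\ref{lem:Ksc} applies and $\lang{z}$ is strictly critical at every step $r-1 \geq t-1$. Let $\lang{m_{r-1}}$ denote the strictly critical language immediately below $\lang{z}$ in the chain at step $r-1$, whenever such a language exists. First, if $\lang{m_{r-1}}$ does not exist, then every strictly critical language at step $r-1$ is a superset of $\lang{z}$ (and so contains $w_r$), the algorithm enters case~(a), and $c_{r-1}(h_{r-1}) = z$ since $\lang{z}$ is the deepest strictly critical language with index at most $r-1$. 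Second, if $\lang{m_{r-1}}$ exists and $w_r \notin \lang{m_{r-1}}$, then by strict nesting $w_r$ is absent from every strictly critical language at or below $\lang{m_{r-1}}$, so case~(b) applies with $k_{r-1}$ equal to the position $j^*$ of $\lang{z}$, giving $i_r = z$.

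Assume for contradiction that neither situation occurs at any $r \geq t$; i.e., $\lang{m_{r-1}}$ exists and $w_r \in \lang{m_{r-1}}$ for every $r \geq t$. Because $w_r \in \lang{m_{r-1}}$ keeps $\lang{m_{r-1}}$ consistent, Claim~\ref{claim:consist} ensures that $\lang{m_{r-1}}$ remains strictly critical at step $r$ and thus stays in the chain; therefore the new immediate-below $\lang{m_r}$ must be a (weakly larger) superset of $\lang{m_{r-1}}$, and the sequence $\{\lang{m_{r-1}}\}_{r \geq t}$ is weakly increasing under inclusion. If the sequence is eventually constant at some $\lang{A} \subsetneq \lang{z}$, then the adversary must eventually enumerate some $v \in \lang{z} \setminus \lang{A}$, producing an $r$ with $w_r \notin \lang{m_{r-1}} = \lang{A}$, contradicting the assumption.

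The remaining possibility, which I expect to be the main obstacle, is that the sequence grows strictly infinitely often, producing a properly increasing chain $\lang{A^{(0)}} \subsetneq \lang{A^{(1)}} \subsetneq \lang{A^{(2)}} \subsetneq \cdots \subsetneq \lang{z}$ of strictly critical languages in $\coll$, with corresponding indices $a^{(j)}$. The key insight is a constraint on these indices coming from the definition of \emph{strictly} critical. At the step $s_{j+1}$ when $\lang{A^{(j+1)}}$ first becomes strictly critical, each earlier $\lang{A^{(k)}}$ with $k \leq j$ is still consistent (by the contradictory assumption); so if $a^{(k)} < a^{(j+1)}$ then strict criticality would force $\lang{A^{(j+1)}} \subsetneq \lang{A^{(k)}}$, contradicting $\lang{A^{(k)}} \subsetneq \lang{A^{(j+1)}}$. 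Hence $a^{(j+1)} \leq \min_{k \leq j} a^{(k)}$. Since the $\lang{A^{(j)}}$ are distinct languages with distinct indices, the sequence $a^{(0)}, a^{(1)}, a^{(2)}, \ldots$ must be strictly decreasing in $\mathbb{N}$, which is impossible. This rules out the chain being infinite and completes the contradiction, establishing accuracy at some $r \geq t$.
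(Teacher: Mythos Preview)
Your argument is correct and uses essentially the same idea as the paper: both track the strictly critical language immediately below $\lang{z}$ in the chain (the paper works with its index $c_r(p_r+1)$, you with the language $\lang{m_{r-1}}$ itself) and derive a contradiction from the fact that this language can neither stabilize nor have its index decrease forever. The paper argues that the index must eventually jump \emph{up}, at which point the old language has become inconsistent and case~(b) yields accuracy; you argue contrapositively that if $w_r$ always lies in $\lang{m_{r-1}}$ then the languages weakly increase while their indices in $\coll$ strictly decrease, which is impossible. These are the same mechanism read in opposite directions.

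One small correction: your assertion that \emph{every} earlier $\lang{A^{(k)}}$ with $k\le j$ remains consistent at step $s_{j+1}$ is not supported by the contradictory hypothesis for $k<j$. Once $\lang{m_{r-1}}$ has grown to $\lang{A^{(k+1)}}$ and beyond, the assumption $w_r\in\lang{m_{r-1}}$ no longer forces $w_r\in\lang{A^{(k)}}$, so $\lang{A^{(k)}}$ may well become inconsistent. Fortunately you only need the case $k=j$, which \emph{is} justified: at the step $s_{j+1}$ where $\lang{A^{(j+1)}}$ first becomes strictly critical, the language $\lang{A^{(j)}}$ was the immediate-below at the preceding step and $w_{s_{j+1}}\in\lang{A^{(j)}}$ by hypothesis, so it is still consistent. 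That alone gives $a^{(j+1)}<a^{(j)}$ for each $j$, hence the strictly decreasing sequence you want. You should also make explicit that $t$ is chosen with $t\ge z+1$ (so that $z\le r-1$ in Situation~(1)); the paper takes $t\ge 1+\max(z,\zt)$ for exactly this reason.
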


\proof{
It is sufficient to show the result for $t \geq \fint$,
for the value of $\fint$ defined 
in the proof of Proposition \ref{stmt:index-based-gen-limit}.
For such values of $t$, let $p_t$ be the index of 
$\lang{z}$ in the list of
critical languages at time $t$;
that is, $z = c_t(p_t)$.

Let us suppose by way of contradiction that the algorithm
is not accurate in any step $r \geq t$.
Since we know from the proof of Theorem  \ref{stmt:index-based-gen-limit} that
the algorithm chooses a language $\lang{i_r} \subseteq \lang{z}$,
in each of these steps, it follows that
$\lang{i_r}$ is a proper subset of $\lang{z}$.
By the definition of the algorithm, this language comes from the
list of strictly critical languages at time $r-1$;
in this list, it has some index $q_{r-1}$, and we have $q_{r-1} > p_{r-1}$.

This means in particular that the list of strictly critical languages
at each step $r \geq t$ contains at least one language after $\lang{z}$.
In other words, the number $c_r(p_r + 1)$ is defined for all $r \geq t$:
this is the index in $\coll$
of the strictly critical language immediately 
following $\lang{z}$ in step $r$.

Let's consider the sequence of numbers
$c_r(p_r + 1)$ as $r$ increases from $t$.
There is no index $r$ such that $c_s(p_s + 1)$ remains constant for
all $s \geq r$; this is because 
$\lang{c_r(p_r + 1)}$ is a proper subset of $\lang{z}$, and so in
particular there is some string $u_r \in \lang{z} - \lang{c_r(p_r + 1)}$,
and some step $s > r$ in which $u_r$ is enumerated by the adversary.
Once this happens, $\lang{c_r(p_r + 1)}$ is no longer consistent,
and so it can no longer be on the list of strictly critical languages;
Therefore $c_s(p_s + 1) \neq c_r(p_r + 1)$.

We say that a step $r > t$ is a {\em changepoint} if 
$c_r(p_r + 1) \neq c_{r-1}(p_{r-1} + 1)$;
the argument in the previous paragraph establishes that there is
an infinite sequence of changepoints.
Since $c_r(p_r + 1)$ starts at $c_t(p_t + 1)$ when $r = t$, and
never decreases below $z$, the difference 
$c_r(p_r + 1) - c_{r-1}(p_{r-1} + 1) \neq 0$ cannot be negative at
every changepoint; therefore, there is some step $r$ for which
$c_r(p_r + 1) - c_{r-1}(p_{r-1} + 1)$ is positive.

We consider this value of $r$:
the index in $\coll$ of the next strictly critical language after $\lang{z}$
has increased.
This means that $\lang{c_{r-1}(p_{r-1} + 1)}$ is no longer critical
in step $r$, and by Claim
\ref{stmt:stay-critical}, this can only happen if 
$\lang{c_{r-1}(p_{r-1} + 1)}$ is no longer consistent.
Therefore, the string $w_r$ enumerated by the adversary in step $r$
does not belong to $\lang{c_{r-1}(p_{r-1} + 1)}$.

It follows that in step $r$, the algorithm uses case (b),
and the index $k_{r-1} = p_{r-1}$.
Therefore, in step $r$ the algorithm sets $i_t = c_{r-1}(p_{r-1})$ ---
that is, the algorithm is accurate in step $r$.
\endpf
}

\section{Breath vs Validity - Index Based}

We now begin making use of our notions of density, focusing first on the case of index-based generation, and the way that density can serve as a quantitative notion of breadth.  Beyond just the formal definition of density, it is useful to consider some of the conceptual issues that motivate the definitions, and some of the associated concepts like an infinite perfect tower.  We therefore start with a discussion of these points, and then go on to our characterization of the density that is achievable for index-based generation.

\subsection{Definition of index based breadth}

We start by recalling the definitions of upper and lower density from Section \ref{sec:intro:density}, repeated here for ease of use.
   \begin{dfn}\label{def:up}
Given two countable sets $\lan$ and $\lan'$ where the cardinality of $\lan'$ is infinite, and the elements of $\lan'$ listed in order as
$L' = \{\ell_1', \ell_2', \ell_3', \ldots\}$, 
we say that the {\em upper density} of $\lan$ in $\lan'$ is
   \[ \dup(\lan, \lan') = \limsup_{N \to \infty}  \frac{\mid L \cap \{\ell_1', \dots, \ell_N'\} \mid}{N}.\]
Analogously, we say that the {\em lower density} of $\lan$ in $\lan'$ is
   \[ \dlow(\lan, \lan') = \liminf_{N \to \infty}  \frac{\mid L \cap \{\ell_1', \dots, \ell_N'\} \mid}{N}.\]
   \end{dfn}

The upper density is very sensitive to the ordering of the strings in the languages. 
Below is an example.
\begin{example}[Reordering strings in $\lan$ and $\lan'$ can result in different upper density]\label{example:reorder}
    Let $\lan$ be the set of even positive integers and $\lan'$ be the set of positive integers. Order the integers in $\lan$ in increasing order.
If the ordering in $\lan'$ is also in increasing order,  the upper density of $\lan$ in $\lan'$ is $\frac{1}{2}$.
        Consider an alternative ordering for $\lan'$:
        Start with the first $2^0 = 1$ smallest even number (2).
            Add the next smallest available odd number (1).
             Include the next $2^1 = 2$ smallest even numbers (4, 6).
            Add the next smallest available odd number (3).
             Include the next $2^2 = 4$ smallest even numbers (8, 10, 12, 14).
           Add the next smallest available odd number (5).
             Continue this pattern, adding $2^k$ smallest available even numbers followed by one odd number.
       This process yields a valid ordering of all integers in $\lan'$. Under this ordering, the upper density of $\lan$ in $\lan'$ is 1.
\end{example}

    Despite this sensitivity, the upper density remains a natural definition, and in all our applications, the ordering is important. The ordering, in some sense, indicates which strings are more significant, with the more important strings come earlier in the ordering. In the second ordering above, the scarcity of odd numbers suggests they are less important, implying that $\lan$ captures most of the important properties of $\lan'$. Thus, it is reasonable to conclude that the upper density of $\lan$ in $\lan'$ is 1.

    In particular, our Theorems \ref{thm:vanishing-instance-intro}, \ref{thm:vanishing-char-intro}, and later Theorem \ref{thm:2} are resistant to this ordering sensitivity. In other words, even if the ordering of elements in different languages might be inconsistent, these results still hold. 

The next two definitions introduce the {\it index-based breadth} for an algorithm.

\begin{dfn}[index-based breadth]
Let $\{\lang{1}, \lang{2}, \ldots\}$ be a collection of countably many distinct languages, and $K$ be the true language. Let $\ma$ be an algorithm trying to perform index-based generation, so at each time step $t$ it is trying to guess one language $\lang{i_t}$ it should generate from. We say that the \textit{index-based breadth of $\ma$} with respect to $\{\lang{1}, \lang{2}, \ldots\}$ and $K$ is at most $\beta$, if for any $\epsilon > 0$, the adversary can generate input such that there exist infinitely many time stamps $t_1 < t_2 < \cdots$ where, for each $t_j$, at least one of the following conditions holds:
\begin{enumerate}
    \item The language algorithm $\ma$ guesses at time $t_j$ includes some string not in $K$ (a failure of index-based generation in the limit), or
    \item The upper density of the language $\lang{i_{t_j}}$ guessed by $\ma$ at time $t_j$ is at most $\beta + \epsilon$ in $K$.
\end{enumerate}
Define the index-based breadth of $\ma$ with respect to $\{\lang{1}, \lang{2}, \dots\}$ and $K$ to be
 \[ \beta^* = \inf \{\beta: {\text{the index-based breadth of $\ma$ is at most $\beta$}}\}.\]
\end{dfn}

For index-based generation, we 
say that an algorithm  $\ma$ {\it guarantees validity} if, regardless of how an adversary enumerates the strings in the true language $K$ as input, there exists a finite time $\fint$ (which may depend on the adversary's input sequence) such that at any time $t \geq \fint$, the language $\lang{i_t}$ guessed by the algorithm at time $t$ will be a subset of $K$.

We now define an important notion that will play a central role in determining the index-based breadth. These concepts will also be crucial in analyzing the element-based breadth in later sections.

\begin{dfn}[infinite perfect tower]\label{def:perfecttower}
    Fix a countable infinite ordered list of distinct languages $(\Lambda_1, \Lambda_2, \ldots)$ and let  $K$ be another languages. Define $B_1 = \bigcap_{i \geq 1} \Lambda_i$, and for each $k \geq 2$, 
      \[ B_k = \left(\bigcap_{i \geq k} \Lambda_i\right) \setminus (B_1 \cup B_2 \cup\dots \cup B_{k-1}).\]
      (Observe that $B_k$ is the set of strings of $\trueL$ that are {\em fixed} by $\Lambda_k$ in the sense defined in Section \ref{sec:overview}.)
      We say $(\Lambda_1, \Lambda_2, \dots)$ is an {\it infinite perfect tower} with respect to $K$ if the following hold simultaneously.
  \begin{enumerate}
      \item (Stronger Completeness) $\bigcup_{j=1}^\infty B_j = K$, and for each $j \geq 1$, $\Lambda_j \subsetneq K$.
      \item (Tower property) for each $j \geq 1$, $B_j \neq \emptyset$.
  \end{enumerate} 
\end{dfn}
This definition is equivalent to Definition \ref{def:perfecttower-intro}. We call $K$ the {\it terminal language} of this infinite perfect tower $(\genlang_1, \genlang_2, \dots)$.
 
 We will see in Theorems \ref{thm:1} and \ref{thm:2} that the existence of the ``thinnest" infinite perfect tower in $\coll$ will essentially determine the index-based breadth guarantee of any algorithm in the KM model that guarantees validity.

 Below are some examples regarding infinite perfect towers. 
\begin{example}\label{ex:1}
Let $K$ be an infinite-dimensional vector space with basis $\{e_i\}_{i=1}^{\infty}$, where each string is a finite sum $\sum c_i e_i$ with $c_i \in \mathbb{N}$ and only finitely many $c_i$ are non-zero. The strings of $K$ are enumerated diagonally: first list strings supported on $e_1$ with $\ell_2$ norm $\leq 1$, then strings supported on $\text{span}\{e_1,e_2\}$ with $\ell_2$ norm $\leq 2$, and so on. Each $\lang{i_j}$ is defined as the vector space over $\mathbb{N}$ with basis $\{e_1,\dots,e_j\}$, and has upper density zero in $K$. The sequence $(\lang{i_1}, \lang{i_2}, \dots)$ is an infinite perfect tower. 
Here, $B_j = \text{span}\{e_1,\dots,e_j\} \setminus \text{span}\{e_1,\dots,e_{j-1}\}$ is non-empty, and we have a strictly increasing chain $\lang{i_1} \subsetneq \lang{i_2} \subsetneq \lang{i_3} \subsetneq \cdots$ whose union equals $K$.
\end{example}

\begin{example}
Let the collection of languages be $\lang{i} = \{n \in \mathbb{Z} \setminus \{0\} : i \mid n\}$ and $K = \lang{1}$. No infinite perfect tower exists since $\bigcap_{i \in I} \lang{i} = \emptyset$ for any infinite index set $I$.
\end{example}

Below is an example with an infinitely perfect tower however the upper density is large.
\begin{example}
Let \(K\) be the set of rational points inside the unit disc \(\mathbb{D}^2\) with its boundary $\mathbb{S}^1$. Consider a mapping $f: \mathbb{N} \to  \mathbb{S}^1$. For \(i \in \mathbb{N}\), construct \(\lang{i}\) by taking all the rational points on the unit disc $\mathbb{D}^2$ enclosed in the  convex hull of the points $f(1), \dots, f(i)$. With proper choice of the function $f$, we can obtain an infinite perfect tower
where the upper density of $\lang{i}$ in $K$ tends to $1$ as $i \to \infty$. Here, rational points are ordered by the sum of absolute values of their coordinates.
\end{example}

\subsection{Characterization of Zero Breadth}
Our first result in this framework states an ``if and only if" condition for the breadth to be zero. 
The following is equivalent to the statement of 
Theorem \ref{thm:vanishing-char-intro} presented in Section \ref{sec:overview}.

\begin{thm}\label{thm:1}
Let $\coll= \{\lang{1}, \lang{2}, \dots\}$ be a collection of countably many distinct languages, each as a countable ordered  sequence of   strings.

\begin{enumerate}
    \item(The ``if" direction) Suppose for any $\epsilon > 0$, there exists an infinite perfect tower $(\lang{i_1}, \lang{i_2}, \dots)$ with respect to $K$,  such that for every $ j \geq 1$, the upper density of $\lang{i_j}$ in $K$ is at most $\epsilon$.
Then any algorithm in the KM model that guarantees validity will have zero index-based breadth with respect to $K$. 

\item(The ``only if" direction) Suppose there exists some \(\epsilon > 0\) such that there is no infinite perfect tower \((\lang{i_1}, \lang{i_2}, \ldots)\) with respect to \(K\), where for every $j \geq 1$ the upper density of \(\lang{i_j}\) in \(K\) is at most \(\epsilon\). Then there exists an algorithm \(\ma\) in the KM model which guarantees validity, and the breadth of \(\ma\) is at least \(\epsilon\).
\end{enumerate}
\end{thm}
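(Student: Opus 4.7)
The plan is to prove the two directions by distinct techniques. The ``if'' direction is an adversarial construction exploiting the thin infinite perfect tower, while the ``only if'' direction is an algorithm design backed by a topological extraction argument.

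\textbf{The ``if'' direction.}
Fix $\epsilon > 0$ and let $(\genlang_1, \genlang_2, \ldots)$ be an infinite perfect tower with respect to $\trueL$ with $\dup(\genlang_j, \trueL) \leq \epsilon$ for every $j$, with fixing sets $B_j$ satisfying $\bigcup_j B_j = \trueL$ and $B_1 \cup \cdots \cup B_j \subseteq \genlang_j$. For each $j$, fix in advance a full enumeration $E_j$ of $\genlang_j$. The adversary proceeds in phases: in phase $j$ it first performs a bookkeeping step, enumerating one as-yet-unseen string from each of $B_1, \ldots, B_j$ (all of which lie in $\genlang_j$) to ensure the final enumeration covers $\trueL$; it then continues enumerating strings of $\genlang_j$ along $E_j$ (skipping those already seen) until the current guess $\lang{i_t}$ satisfies $\lang{i_t} \subseteq \genlang_j$, and marks that moment as $T_j$ before advancing to phase $j+1$. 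The key claim is that phase $j$ terminates in finite time, which I prove by comparison with the parallel hypothetical scenario in which $\genlang_j$ is the true language and the adversary enumerates $\genlang_j$ in an order matching the real enumeration so far and continuing along $E_j$. Since the algorithm is purely a function of its observations, its guesses agree with the real scenario at matching prefixes; validity applied to the hypothetical forces the guess to become a subset of $\genlang_j$ at some finite time, at which point phase $j$ ends. At $T_j$ we have $\dup(\lang{i_{T_j}}, \trueL) \leq \dup(\genlang_j, \trueL) \leq \epsilon$, so condition (2) of the breadth definition holds at the infinite sequence $T_1 < T_2 < \cdots$.

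\textbf{The ``only if'' direction.}
Assume that no infinite perfect tower with respect to $\trueL$ consists entirely of languages of upper density at most $\epsilon$. I would construct the algorithm $\ma$ by modifying the accurate algorithm from Section 3: at each step, rather than descending to the deepest strictly critical language, $\ma$ descends to the deepest strictly critical language whose upper density in an appropriate reference (which converges to $\trueL$ in the long run) is at least $\epsilon - \delta_t$, for a tolerance $\delta_t \downarrow 0$. Validity is preserved by an argument analogous to the accurate algorithm: $\trueL$ eventually becomes strictly critical by Lemma \ref{lem:Ksc}, and the algorithm's guess stays weakly below $\trueL$ in the descending chain once $\trueL$ is critical. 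For breadth, suppose for contradiction that there exist infinitely many times $t_1 < t_2 < \cdots$ at which $\lang{i_{t_k}} \subseteq \trueL$ has $\dup(\lang{i_{t_k}}, \trueL) \leq \epsilon - \delta$ for some fixed $\delta > 0$. The aim is then to extract from this bad subsequence an infinite perfect tower with respect to $\trueL$ all of whose terms have upper density at most $\epsilon - \delta < \epsilon$, contradicting the hypothesis.

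\textbf{Main obstacle.}
The hardest technical step is this extraction in the ``only if'' direction, since a descending chain of density-$\leq \epsilon$ subsets of $\trueL$ need not satisfy item (iii) of Definition \ref{def:perfecttower-intro}: its union could be a proper subset of $\trueL$. Resolving this requires two ingredients working together: (a) the dynamics of $\ma$, which responds to a live enumeration of $\trueL$ and can be engineered so that every string of $\trueL$ eventually lies inside cofinitely many of the chosen guesses; and (b) the first-countable Hausdorff topology on $(\coll, \mathcal{T})$ from the overview, under which a convergent sub-subsequence of the bad guesses has a limit in the pospace that must equal $\trueL$ once (a) is in force. Together these upgrade the bad subsequence into a genuine infinite perfect tower with terminal $\trueL$ and every term of density at most $\epsilon - \delta$, completing the contradiction.
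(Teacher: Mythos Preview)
Your ``if'' direction has the right architecture but a real gap: when you advance from phase $j$ to phase $j+1$, the strings you enumerated along $E_j$ lie in $\genlang_j$ but need not lie in $\genlang_{j+1}$, because an infinite perfect tower is not nested by inclusion. Your coupling for phase $j+1$ needs the entire prefix so far to be a valid start of an enumeration of $\genlang_{j+1}$, and this breaks as soon as any such stray string has appeared. The paper repairs this by not advancing to index $j+1$ but to the smallest $j'$ for which every string enumerated so far lies in $B_1 \cup \cdots \cup B_{j'} \subseteq \genlang_{j'}$; since only finitely many strings have been emitted and $\bigcup_m B_m = \trueL$, such a $j'$ exists and the coupling goes through.

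Your ``only if'' direction overreaches. The modification you propose --- restrict descent to strictly critical languages whose upper density in ``an appropriate reference which converges to $\trueL$'' exceeds $\epsilon - \delta_t$ --- requires the algorithm to have a reference that identifies $\trueL$, which Gold--Angluin forbids; without it the threshold cannot be evaluated and your validity sketch has no force. The paper simply runs $\Acc$ unmodified and argues the contrapositive: if its breadth were below $\epsilon$ on some enumeration, infinitely many guesses would be proper subsets of $\trueL$ with upper density at most $\epsilon$, and from that subsequence one extracts a thin infinite perfect tower, contradicting the hypothesis. Your extraction sketch is actually sound --- property (a) is automatic from consistency ($w_s \in \seen_t \subseteq \lang{i_t}$ for all $t \geq s$) rather than something to engineer, so the whole bad subsequence already converges to $\trueL$ in $(\coll,\mathcal{T})$ with no sub-subsequence needed, and the construction behind Claim \ref{claim:iptequiv} then yields the tower from bad guesses alone. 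Drop the modification, run your extraction against $\Acc$, and the argument closes.
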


\subsubsection{Proof of Theorem \ref{thm:1}, the ``if" direction}
\begin{proof}[Proof of Theorem \ref{thm:1}: the ``if" direction]

Suppose for any $\epsilon > 0$, there exists an infinite perfect tower of languages $\lang{i_1}, \lang{i_2}, \dots$ such that
for each $ j \geq 1$, the upper density of $\lang{i_j}$ in $K$ is at most $\epsilon$.

We want to couple several scenarios, when the true language is $L_{i_j}$ versus $K$. Recall the definition of $B_j$'s in Definition \ref{def:perfecttower}. There are two cases.

Case 1: If all of the \(B_j\)'s are finite. If the adversary enumerates the strings in \(B_1\) first, then \(B_2\), followed by \(B_3\), and so on, they can exploit the assumption \(\bigcup B_j = K\) and the fact that the algorithm guarantees validity in the following way. From some time \(T_1\) onward, the adversary knows that the algorithm will generate only strings from \(K\). Suppose the \(T_1\)-th string enumerated by the adversary this way is in \(\lang{j_1}\). The index $j_1$ exists by the Completeness assumption in an infinite perfect tower. The adversary will know that if they enumerate strings in \(B_1, B_2,\) etc., until reaching the \(T_1\)-th step in \(B_{j_1}\), the algorithm will start outputting  strings which are in \(K\).

The first $T_1$ strings are all from $\lang{j_1}$. The adversary then begins to enumerate strings in \(\lang{j_1}\), pretending that the true language is \(\lang{j_1}\). Since the algorithm always guarantees validity, from some time \(T_2 > T_1\), the algorithm will ensure all outputs are from \(\lang{j_1}\). Let \(j_2\) be the smallest index larger than \(j_1\) such that all the first \(T_2\) strings enumerated (which are all in \(\lang{j_1}\)) are in \(B_1 \cup \dots \cup B_{j_2}\). Note that \(B_1 \cup \dots \cup B_{j_2} \subset \lang{j_2}\). The adversary will then pretend the true language is \(\lang{j_2}\) by enumerating the remaining strings in \(B_1 \cup \dots \cup B_{j_2}\) first, followed by the rest of the strings in \(\lang{j_2}\).

Again, since the algorithm guarantees validity, by a similar argument, at some finite time \(T_3 > T_2\), the algorithm will only output strings in \(\lang{j_2}\). This process is repeated. Thus, we find an infinite sequence of time stamps \(T_1 < T_2 < T_3 < \dots\) such that at each time stamp \(T_s\), the possible set of outputs of the algorithm will only be strings in \(\lang{j_{s-1}}\). This implies that the breadth is at most \(\epsilon\) since the upper density of each language in this infinite perfect tower is at most \(\epsilon\), as desired.

The subtlety here is that we need to ensure the way the adversary generates input will indeed enumerate all the strings in \(K\). This is guaranteed by our analysis and procedure above precisely because each \(B_i\) is finite and also due to the Stronger Completeness property (\(\bigcup_{j=1}^\infty B_j = K\)).

\

Case 2: Suppose there are $B_j$'s which are infinite. Without loss of generality, we may assume that $B_1$ is infinite (otherwise we can truncate the first few languages in the infinite perfect tower and it will still be an infinite perfect tower). 
The proof is similar to the previous one, with careful consideration of the requirement that the adversary must eventually enumerate all strings in $K$.

Given that $B_1$ is infinite, the adversary initially pretends that the true language is $\lang{i_1}$. By saying the adversay pretends that the true language is $\lang{i_1}$, the adversary should have a plan to {\it be able} to enumerate all the strings in $\lang{i_1}$. Note that the adversary cannot  enumerate all the strings in $B_1$ first, since this cannot be extended to a full enumeration of all the strings in $\lang{i_1}$ if $\lang{i_1} \setminus B_1$ is again infinite. Therefore, the adversary could instead alternate strings in $B_1$ and $\lang{i_1} \setminus B_1$, until the later is exhausted if ever happens. This is a {\it plan} to enumerate all the strings in $\lang{i_1}$. Due to the algorithm's validity, for this enumeration plan of the adversary, there exists a finite $T_1$ such that at $T_1$, all the possible strings generated by the algorithm should belong to $\lang{i_1}$. 

The adversary's strategy is as follows:

1. Enumerate the first $T_1$ strings in $\lang{i_1}$ in the way described above, compelling the algorithm to output only strings from $\lang{i_1}$ at time $T_1$. 

2. Let $j_2 > i_1$ be the smallest index such that the first $T_1$ strings are in $B_1 \cup \dots \cup B_{j_2}$. Such $j_2$ exists by the stronger completeness condition. Since $B_1 \cup \dots \cup B_{j_2} \subset \lang{j_2}$, the adversary will now pretend that the true language is $\lang{j_2}$. The way the adversary enumerates the strings in $\lang{j_2}$ is similar to above, again alternate between the remaining un-enumerated strings in $B_1 \cup \dots \cup B_{j_2} \subset \lang{j_2}$ and $\lang{j_2} \setminus (B_1 \cup \dots \cup B_{j_2})$, until the latter is exhausted if ever happens.

Similar to the previous case, at some $T_2 > T_1$, the algorithm will exclusively output strings only from $\lang{j_2}$. Let the first $T_2$ strings all belong to $B_1 \cup \dots \cup B_{j_3}$ for some $j_3 > j_2$. Then now the adversary pretends that the true language is $\lang{j_3}$. The process is repeated.  

By the Stronger Completeness condition, the adversary will eventually enumerate all the strings in $K$. This is why the  stronger completeness condition in the definition of an infinite perfect tower is important. Furthermore, at each time stamps $T_t$ for $t \geq 2$, the possible set of output from the algorithm are subsets of $\lang{j_t}$. Therefore the breath of the algorithm is at most $\epsilon$ by definition.  
\end{proof}

\subsubsection{Proof of Theorem \ref{thm:1}, the ``only if" direction}
We will show that the algorithm in Theorem \ref{thm:acc} works, which we denote this algorithm as $\Acc$. 

We first prove some general claims about the possible sequences of languages their algorithm touches on as time progresses. 

\begin{dfn}
  Given a language $K$ and an infinite ordered sequence of languages 
$ (\lanj{1}, \lanj{2}, \dots)$,
  where a language $\lanj{j}$ could be repeated in the sequence. We say this sequence is {\it feasible} if the following conditions hold simultaneously.
\begin{enumerate}
    \item (proper subset) For each $t \geq 1$,   $\lanj{t} \subsetneq K.$
    \item(string enumeration) There is an enumeration of strings in $K = (w_1, w_2, \dots)$ such that for each $t \geq 1$, $w_1, \dots, w_t \in \lanj{t}$. 
\end{enumerate}  
\end{dfn}

\begin{claim}[Union tail]\label{claim:uniontail}
    Let $\mathcal{L} = (\lanj{1}, \lanj{2}, \dots)$ be an infinite feasible sequence.  For each $t \geq 1$, 
   \[
    \bigcup_{t': t' \geq t} \lanj{t'}= K. \]
\end{claim}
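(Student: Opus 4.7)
The plan is to observe that this claim is essentially immediate from the definition of feasibility, and amounts to a matching of two indexing schemes: the time index $t'$ into the sequence of languages, and the position index $n$ of a string in the enumeration of $K$.

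First I would dispense with the easy inclusion. Since feasibility condition (1) gives $\lanj{t'} \subsetneq K$ for every $t'$, we immediately have $\bigcup_{t' \geq t} \lanj{t'} \subseteq K$.

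For the reverse inclusion $K \subseteq \bigcup_{t' \geq t} \lanj{t'}$, I would fix an arbitrary string $w \in K$ and invoke feasibility condition (2): there exists an enumeration $K = (w_1, w_2, \dots)$ such that $w_1, \dots, w_{t'} \in \lanj{t'}$ for every $t' \geq 1$. Since this enumeration lists all of $K$, we have $w = w_n$ for some $n \geq 1$. Now take $t' = \max(n, t)$. Then $t' \geq t$, and $t' \geq n$, so $w_n$ appears in the initial segment $w_1, \dots, w_{t'}$, which by condition (2) is contained in $\lanj{t'}$. Hence $w \in \lanj{t'} \subseteq \bigcup_{s \geq t} \lanj{s}$, completing the proof.

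There is no real obstacle here; the only subtlety is noticing that the conclusion holds uniformly over all $t$ even though the enumeration in condition (2) is a single fixed enumeration, and this is handled simply by choosing $t' = \max(n, t)$ rather than the more naive $t' = n$. Thus the argument is two or three lines and does not require any further machinery beyond unwinding the definition of feasibility.
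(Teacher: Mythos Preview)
Your proof is correct and follows essentially the same approach as the paper: both arguments unwind the definition of feasibility, using property (1) for the inclusion $\bigcup_{t'\geq t}\lanj{t'}\subseteq K$ and the enumeration from property (2) to place every $w_n$ inside some $\lanj{t'}$ with $t'\geq t$. The paper's version is slightly terser (it notes $w_{t'}\in\lanj{t'}$ for $t'\geq t$ and $w_1,\dots,w_t\in\lanj{t}$ separately, rather than combining via $t'=\max(n,t)$), but the content is identical.
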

\begin{proof}
    Notice that for each $t' \geq t$, $w_{t'} \in \lanj{t'}$. Furthermore, $w_1, w_2, \dots, w_t \in \lanj{t}$. Thus the claim is proved as $(w_1, w_2, \dots)$ is an enumeration of all the strings in $K$. 
\end{proof}

\begin{claim}\label{claim:heredity}
    An infinite subsequence of an infinite feasible sequence is again an infinite feasible sequence. 
\end{claim}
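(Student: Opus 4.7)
The plan is to verify both clauses of feasibility for an arbitrary infinite subsequence, and to observe that this can be done using the very same string enumeration of $K$ that witnessed feasibility of the original sequence.

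First I would fix notation: let $\mathcal{L} = (\lanj{1}, \lanj{2}, \dots)$ be infinite feasible with witness enumeration $K = (w_1, w_2, \dots)$, and let $t_1 < t_2 < t_3 < \cdots$ be any strictly increasing sequence of positive integers, producing the subsequence $\mathcal{L}' = (\lanj{t_1}, \lanj{t_2}, \dots)$. The proper-subset clause is immediate: each $\lanj{t_k} \subsetneq K$ holds because the property is inherited term-by-term from $\mathcal{L}$.

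For the string-enumeration clause, the key (and essentially only) observation is that because $(t_k)$ is a strictly increasing sequence of positive integers, we have $t_k \geq k$ for every $k \geq 1$. The same enumeration $(w_1, w_2, \dots)$ of $K$ that witnesses feasibility of $\mathcal{L}$ will therefore witness feasibility of $\mathcal{L}'$: for each $k \geq 1$,
\[
\{w_1, \dots, w_k\} \;\subseteq\; \{w_1, \dots, w_{t_k}\} \;\subseteq\; \lanj{t_k},
\]
where the first inclusion uses $k \leq t_k$ and the second is exactly the string-enumeration property of the original feasible sequence applied at index $t_k$. Reindexing by $k$, this is precisely what is required for $\mathcal{L}'$ to be feasible.

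There is no real obstacle here; the lemma is essentially a bookkeeping observation that feasibility is a \emph{downward-closed} property under passing to infinite subsequences, and no new enumeration needs to be constructed. The only point to highlight when writing it out is the bound $t_k \geq k$, which is what allows the witness enumeration to be reused verbatim.
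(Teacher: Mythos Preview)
Your proof is correct and follows essentially the same approach as the paper: both reuse the original witness enumeration $(w_1,w_2,\dots)$ of $K$ and rely on the observation that $\{w_1,\dots,w_k\}\subseteq \lanj{t_k}$ whenever $t_k\geq k$. Your write-up is in fact more explicit than the paper's one-line justification, but the underlying idea is identical.
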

\begin{proof}
    This is by the string enumeration property, and 
    by noticing that if $w_1, \dots, w_t \in \lanj{t}$, then $w_1, \dots, w_t \in \lanj{t'}$ for all $t' > t$. 
\end{proof}

\begin{claim}\label{claim:finite}
    In an infinite feasible sequence $\mathcal{L}$, each language $\lan$ can only appear finitely number of times.
\end{claim}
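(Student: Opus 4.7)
The plan is to prove this by contradiction, exploiting both properties of a feasible sequence simultaneously: the proper subset property forces each $\lanj{t}$ to miss at least one element of $K$, while the string enumeration property forces $\lanj{t}$ to contain an ever-growing initial segment of some fixed enumeration of $K$. If a language were to repeat infinitely often, we could push the ``growing initial segment'' all the way to all of $K$, contradicting properness.

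Concretely, I would first invoke the definition of feasibility to fix an enumeration $K = (w_1, w_2, \dots)$ such that $\{w_1, \dots, w_t\} \subseteq \lanj{t}$ for every $t \geq 1$. Then, assuming for contradiction that some language $\lan$ occurs at infinitely many positions $t_1 < t_2 < t_3 < \cdots$ in $\mathcal{L}$, the string enumeration property applied at each $t_k$ yields $\{w_1, \dots, w_{t_k}\} \subseteq \lanj{t_k} = \lan$. Taking the union over $k$ and using $t_k \to \infty$ shows that every string $w_j$ belongs to $\lan$, so $K = \{w_1, w_2, \dots\} \subseteq \lan$. But the proper subset property gives $\lan = \lanj{t_1} \subsetneq K$, a contradiction.

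There is essentially no obstacle here; the argument is a direct compatibility check between the two clauses of the feasibility definition. The only point to handle carefully is that the enumeration $(w_1, w_2, \dots)$ provided by the string enumeration property is a single fixed enumeration of all of $K$ (so every element of $K$ appears as some $w_j$), which is what lets us conclude $K \subseteq \lan$ from $w_j \in \lan$ for all $j$. Once this is noted, the contradiction with $\lan \subsetneq K$ closes the argument, proving that every language in $\coll$ appears only finitely many times in $\mathcal{L}$.
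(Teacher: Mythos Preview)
Your proof is correct and essentially the same as the paper's. The paper argues directly---pick a single string $w_T \in K \setminus \lan$ (which exists since $\lan \subsetneq K$), and observe that for all $t \geq T$ the string enumeration property forces $w_T \in \lanj{t}$, so $\lan$ cannot appear past position $T$---whereas you phrase the same mechanism as a contradiction by accumulating all of $K$ inside $\lan$; the underlying idea is identical.
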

\begin{proof}
    Let $w_T$ be an string in $K \setminus \lan$.  Thus after time $T$, $\lan$ cannot appear anymore in $\mathcal{L}$ by the string enumeration property. This also means $\lan$ can only appear in the first $T$ positions in $\mathcal{L}$, and thus the occurrence of $\lan$ is finite. 
\end{proof}

\begin{claim}\label{claim:inf}
    Any  feasible sequence $\mathcal{L}$ contains infinitely number of distinct languages. 
\end{claim}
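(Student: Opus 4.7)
The plan is to argue by contradiction using Claim \ref{claim:finite}. Suppose $\mathcal{L}$ contains only finitely many distinct languages, say $\lan_1, \lan_2, \dots, \lan_k$. By Claim \ref{claim:finite}, each $\lan_i$ can appear only finitely many times in $\mathcal{L}$; call this number of appearances $n_i$. Then the total length of $\mathcal{L}$ is at most $n_1 + n_2 + \cdots + n_k$, which is finite. This contradicts the assumption that $\mathcal{L}$ is an infinite sequence, completing the proof.

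The argument is essentially a one-liner because Claim \ref{claim:finite} already does the heavy lifting (using the string enumeration property to show that once an adversary has enumerated a string outside of some $\lan$, the language $\lan$ can never reappear in the sequence). There is no real obstacle here: the only thing to be careful about is to interpret ``feasible sequence'' as a genuinely infinite list indexed by $t \geq 1$ (as the definition specifies), rather than as a finite one, so that the pigeonhole-style counting yields the desired contradiction.
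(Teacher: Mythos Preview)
Your proof is correct and follows essentially the same approach as the paper: both argue by contradiction from Claim \ref{claim:finite}, noting that finitely many distinct languages each appearing finitely often would force $\mathcal{L}$ to have finite length, contradicting that it is an infinite sequence.
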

\begin{proof}
    This is a corollary of Claim \ref{claim:finite}. If $\mathcal{L}$ contains a finite number of distinct languages, and each language only appears finitely number of times in $\mathcal{L}$, then the feasible sequence has to have finite length, a contradiction. 
\end{proof}
The claims above could also be easily seen by the topological definition of infinite perfect tower. We will elaborate more in Claim \ref{claim:iptequiv}. 

\begin{claim}\label{claim:K'unionisK}
 Let $\mathcal{L}$ be an infinite feasible sequence with respect to $K$.   For any finite collection of strings $K'$ in $K$, the union of the strings over the languages in $\{\lan:  \lan \in \mathcal{L} {\text{ and }}  K' \subset \lan \}$  is  $K$. 
\end{claim}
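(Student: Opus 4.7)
The plan is to reduce the claim directly to Claim \ref{claim:uniontail} by exploiting the finiteness of $K'$ together with the string enumeration property of a feasible sequence. First I would fix the enumeration $(w_1, w_2, \dots)$ of $K$ guaranteed by the string enumeration property, so that $w_1, \dots, w_t \in \lanj{t}$ for all $t \geq 1$. Since $K'$ is a finite subset of $K$, every element of $K'$ appears somewhere in this enumeration, so there exists a finite index $T$ (namely the largest position in the enumeration of any element of $K'$) such that $K' \subseteq \{w_1, \dots, w_T\}$.

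Next I would observe that for every $t \geq T$, the string enumeration property gives $K' \subseteq \{w_1, \dots, w_T\} \subseteq \{w_1, \dots, w_t\} \subseteq \lanj{t}$. Hence the tail $(\lanj{T}, \lanj{T+1}, \lanj{T+2}, \dots)$ consists entirely of languages $\lan \in \mathcal{L}$ with $K' \subseteq \lan$. In particular,
\[
\bigcup_{t \geq T} \lanj{t} \subseteq \bigcup \{\lan : \lan \in \mathcal{L} \text{ and } K' \subseteq \lan\}.
\]
By Claim \ref{claim:uniontail} applied to the feasible sequence at time $T$, the left-hand side equals $K$. Combined with the fact that each $\lan \in \mathcal{L}$ satisfies $\lan \subsetneq K$ (so the union on the right is contained in $K$), this sandwich forces the union on the right to equal $K$, as desired.

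There is no real obstacle here: the only subtlety is making sure one picks $T$ large enough to capture all of $K'$, which is legitimate precisely because $K'$ is finite (the statement would fail in general for infinite $K'$, which explains why finiteness is assumed). The proof is essentially a one-line consequence of Claim \ref{claim:uniontail} once the threshold $T$ is identified.
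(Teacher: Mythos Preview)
Your proposal is correct and follows essentially the same approach as the paper: pick a threshold index $T$ large enough that $K' \subseteq \{w_1,\dots,w_T\}$, observe that every $\lanj{t}$ with $t \geq T$ then contains $K'$, and apply Claim~\ref{claim:uniontail} to the tail. The paper's proof is just a terser version of exactly this argument.
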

\begin{proof}
    Suppose the last string in $K'$ being enumerated is $w_{T_0}$.  By the string enumeration property, for any $t > T_0$, it holds that $K' \subset \lanj{t}$. Then the claim follows by Claim \ref{claim:uniontail}.
\end{proof}
\begin{claim}\label{claim:K'infinite}
  There is no finite collection of strings $K'$ in $K$, such that the set of languages $\{ \lan : \lan \in \mathcal{L} {\text{ and }}  K' \subset \lan \}$  has finite cardinality as a set (i.e., the number of distinct such languages is finite).
\end{claim}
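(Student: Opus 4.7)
The plan is to argue by contradiction, combining the string enumeration property with Claim \ref{claim:finite} (each language occurs only finitely often in $\mathcal{L}$). Suppose there is a finite collection $K' = \{v_1, \dots, v_k\} \subset K$ such that $\mathcal{S} = \{\lan : \lan \in \mathcal{L} \text{ and } K' \subset \lan\}$ has only finitely many distinct languages, say $\mathcal{S} = \{\lan_1, \dots, \lan_m\}$.

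The first step is to locate a time after which every language appearing in $\mathcal{L}$ must lie in $\mathcal{S}$. Let $(w_1, w_2, \dots)$ be the enumeration of $K$ witnessing that $\mathcal{L}$ is feasible, and choose $T_0$ large enough that $K' \subseteq \{w_1, \dots, w_{T_0}\}$ (this exists because $K'$ is finite). Then for every $t > T_0$, the string enumeration property gives $\{w_1, \dots, w_{T_0}\} \subseteq \lanj{t}$, hence $K' \subseteq \lanj{t}$ and therefore $\lanj{t} \in \mathcal{S}$.

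The second step is to derive the contradiction. By Claim \ref{claim:finite}, each of the finitely many languages $\lan_1, \dots, \lan_m$ appears only finitely many times in $\mathcal{L}$; let $N_i$ be the number of occurrences of $\lan_i$, and set $N = N_1 + \cdots + N_m$. Then the set of indices $t$ for which $\lanj{t} \in \mathcal{S}$ has size at most $N$, so in particular only finitely many $t$ satisfy $\lanj{t} \in \mathcal{S}$. But by the previous paragraph, every $t > T_0$ satisfies this, and there are infinitely many such $t$, a contradiction.

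There is no real obstacle here: the claim is a direct packaging of Claim \ref{claim:finite} together with the observation, already implicit in Claim \ref{claim:K'unionisK}, that once $K'$ has been enumerated every subsequent language in $\mathcal{L}$ must contain $K'$. The only care needed is to invoke the string enumeration property to justify that containment from time $T_0$ onward, rather than trying to extract it from the union formula in Claim \ref{claim:K'unionisK} alone (which would not preclude a finite family of proper subsets whose union is $K$).
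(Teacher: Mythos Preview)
Your proof is correct and follows essentially the same approach as the paper: pick $T_0$ so that $K'$ is among the first $T_0$ enumerated strings, observe via the string enumeration property that $\lanj{t}\supset K'$ for all $t>T_0$, and then invoke Claim~\ref{claim:finite} to conclude that infinitely many distinct languages must appear. The only cosmetic difference is that you frame it as a contradiction while the paper states the conclusion directly.
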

\begin{proof}
    Suppose the last string in $K'$ being enumerated is $w_{T_0}$. Thus for any $t > T_0$, $\lanj{t} \in \{ \lan : \lan \in \mathcal{L} {\text{ and }}  K' \subset \lan \}$. However, by Claim \ref{claim:finite}, each language $\lan$ can only appear finite number of times in $\mathcal{L}$. Thus $\{ \lan : \lan \in \mathcal{L} {\text{ and }}  K' \subset \lan\}$ should contain infinitely many distinct languages. 
\end{proof}

By Theorem \ref{thm:acc},  no matter how  the adversary enumerates the strings in $K$, there is a finite $T$ such that from $T$ on, the algorithm $\Acc$ always outputs strings from $K$. In particular, this algorithm implies that for any $t \geq T$, the algorithm will only output from some language $\lang{i_t} \subset K$. This results in a sequence 
\[
\mathcal{L}_0=(\lang{i_T}, \lang{i_{T+1}}, \dots).
\] 
Note that some of the $\lang{i_t}$ might repeat itself, and a language could reappear in the sequence after disappearing for sometime. The only time when $\lang{i_t} \neq \lang{i_{t+1}}$ is when a string not in $\lang{i_t}$ is being generated by the adversary or because $\lang{i_t}$ is not among the first $t$ languages (with respect to the language ordering in the chain) in the descending (under set inclusion) chain of strictly critical languages at time $t$.  If a list $\lang{i_t}$ is switched out because $s_{{t+1}} \notin \lang{i_t}$, then $\lang{i_t} \neq \lang{i_{t+1}}$ and the language $\lang{i_t}$ will never appear again in the sequence. 

Let $\mathcal{L}_1$ be the subsequence of $\mathcal{L}_0$ by removing all the languages which equal to $K$. 
Note that if $\mathcal{L}_1$ becomes finite, it means that the algorithm eventually stabilizes in $K$, and thus the breadth is one. 
\begin{claim}\label{claim:f1}
   If $\mathcal{L}_1$ is an infinite sequence, then $\mathcal{L}_1$ is also an infinite feasible sequence.  Any infinite subsequence of $\mathcal{L}_1$ is also an infinite feasible sequence.
\end{claim}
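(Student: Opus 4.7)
The plan is to verify the two defining properties of a feasible sequence for $\mathcal{L}_1$ directly, taking the adversary's own enumeration $(w_1, w_2, \ldots)$ of $K$ as the witnessing enumeration. Property 1 (proper subset) is essentially immediate from the construction of $\mathcal{L}_1$: by Theorem~\ref{thm:acc} and the choice of $T$, every language in $\mathcal{L}_0$ satisfies $\lang{i_t} \subseteq K$, and $\mathcal{L}_1$ is obtained precisely by deleting those entries that equal $K$, so every remaining entry is a proper subset of $K$.

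For property 2 (string enumeration), the key intermediate fact to establish is that for every sufficiently large $t$, $\seen_t = \{w_1, \ldots, w_t\} \subseteq \lang{i_t}$. The language $\lang{i_t}$ is always selected from the list of strictly critical languages at time $t-1$, so by consistency $\seen_{t-1} \subseteq \lang{i_t}$. To see that $w_t \in \lang{i_t}$ as well, note that in case (a) of $\Acc$ the string $w_t$ lies in every strictly critical language from time $t-1$ by hypothesis, while in case (b) the index $k_{t-1}$ is chosen precisely so that $w_t \in \lang{c_{t-1}(k_{t-1})}$; in either case $w_t \in \lang{i_t}$. We may enlarge $T$ if necessary to exclude the finitely many early steps where the case (b) edge case (no such $k_{t-1}$) could occur, since the algorithm's definition guarantees such a bound exists. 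Thus $\seen_t \subseteq \lang{i_t}$ for all $t \geq T$.

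Now index the elements of $\mathcal{L}_1$ as $(\lanj{1}', \lanj{2}', \ldots)$, where $\lanj{k}' = \lang{i_{t_k}}$ for some subsequence $T \leq t_1 < t_2 < \cdots$ of step indices; in particular $t_k \geq k$. Then $\{w_1, \ldots, w_k\} \subseteq \seen_{t_k} \subseteq \lang{i_{t_k}} = \lanj{k}'$, verifying property 2 for $\mathcal{L}_1$. The second assertion of the claim, that every infinite subsequence of $\mathcal{L}_1$ is again feasible, follows immediately from Claim~\ref{claim:heredity} applied to $\mathcal{L}_1$. The only mildly delicate step in this program is the verification that $\seen_t \subseteq \lang{i_t}$, which hinges on the specific construction of $\Acc$ in cases (a) and (b); once that is in hand, the rest is routine bookkeeping with the adversary's enumeration.
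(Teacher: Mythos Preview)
Your proof is correct and follows essentially the same approach as the paper, which simply invokes Claim~\ref{claim:heredity}. The paper's one-line proof leaves implicit the fact that $\mathcal{L}_0$ satisfies the string-enumeration property (i.e., $\seen_t \subseteq \lang{i_t}$ for $t \geq T$), which you verify explicitly via the case analysis of $\Acc$; this is a useful elaboration of what the paper takes for granted.
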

\begin{proof}
    The proof is by Claim \ref{claim:heredity}. 
\end{proof}

From now on, we assume that the breadth is not one, as otherwise we are done. 

We will prove by contradiction. We assume that there is no infinite perfect tower where the upper density of each language is at most $\epsilon$ is $K$. Suppose the breadth is at most $\epsilon$.
We will use the properties derived from the claims above to extract an infinite perfect tower where the upper density of each language is at most $\epsilon$ is $K$, which will lead to a contradiction and thus complete the proof.

Let $\epsilon > 0$. The algorithm in Theorem \ref{thm:acc} having breadth at most $\epsilon$ implies that there exists an infinite sequence of time stamps $t$ such that the language at time $t$, which is $\lang{i_t}$ in $\mathcal{L}_0$, has upper density at most $\epsilon$ in $K$. 
We denote by $\bar{\mathcal{L}}$ the sequence of languages induced by these $\lang{i_t}$ in $\mathcal{L}_1$ (or equivantly, in $\mathcal{L}_0$). By Claim \ref{claim:heredity}, $\bar{\mathcal{L}} = (\lanj{1}, \lanj{2}, \dots)$ is again an infinite feasible sequence.

We now show that $\bar{\mathcal{L}}$ contains an infinite perfect tower $\bar{\mathcal{L}}' = (\lanj{1}', \lanj{2}', \dots)$ where the upper density of each language is at most $\epsilon$ is $K$.

Notice the string $w_1 \in \bigcap_{\lan \in \bar{\mathcal{L}}} \lan$ as $w_1 \in \bigcap_{\lan \in {\mathcal{L}_0}} \lan$. We build the infinite perfect tower by first letting $\lanj{1}'$ to be the first language in $\bar{\mathcal{L}}$, which is $\lanj{1}$.  Consider the string $w_{t_2}$ with the smallest index that is not in $ \lanj{1}' = \lanj{1}$. Let $\lanj{i_2}$ be the left-most language in the ordering of $\bar{\mathcal{L}}$ to the right of $\lanj{1}$ but contains all the strings enumerated from $w_1$ all the way to $w_{t_2}$ (inclusive of $w_{t_2}$). Such an index $i_2$ must exists, because after some finite time, all the languages in $\bar{\mathcal{L}}$ will contain all the strings from $w_1, w_2, \dots, w_{t_2}$. Since $w_{t_2} \notin \lanj{1}$, clearly $\lanj{i_2}$ is not a subset of $\lanj{1}$.  Let $\lanj{2}'$ be $\lanj{i_2}$.

Let $K'$ be the set of strings  $\{w_1, \dots, w_{t_2}\}$. Consider the subsequence of $\bar{\mathcal{L}}$ induced by all the languages in $\bar{\mathcal{L}}$ which contains $K'$. Call this subsequence $\bar{\mathcal{L}}_1$. By Claims \ref{claim:K'unionisK} and \ref{claim:K'infinite} applied to the infinite feasible sequence $\bar{\mathcal{L}}$, this subsequence $\bar{\mathcal{L}}_1$ contains infinite many distinct languages and has union $K$, and thus is also an infinite feasible sequence by Claim \ref{claim:heredity}. Furthermore, all the languages in this sequence contain $K'$. Continuing repeating this process, we will reach an infinite sequence of distinct languages
$\bar{\mathcal{L}}'= (\lanj{1}', \lanj{2}', \dots)$ such that their union is $K$, and $w_1, \dots, w_t \in \lanj{t}$ for every $t \geq 1$. 

We hope that by removing some languages from $\bar{\mathcal{L}}'$ if necessary, we will get an infinite perfect tower. 

For the feasible sequence $\bar{\mathcal{L}}'= (\lanj{1}', \lanj{2}', \dots)$, we slightly abuse notation by defining $B_1 = \bigcap_{i \geq 1} L_i$, and for each $k \geq 2$, 
      \[ B_k = (\bigcap_{i \geq k} \lanj{i}' )\setminus (B_1 \cup B_2 \cup\dots \cup B_{k-1}).\]
By our construction, $\bigcup_{i \geq 1} B_i = K$. 

We first need to guarantee that each $B_j \neq \emptyset$ by removing some languages from $\bar{\mathcal{L}}'$ if necessary. 

For each $i$, define $D_i = B_1 \cup \dots \cup B_i$. Thus $D_i \subset \lanj{i}'$. Then $B_{i+1} = \emptyset $ if and only if $D_i = D_{i+1}$.  We claim that for each $i$, there is some $i' > i$ such that $D_{i'} \neq D_i$. This is because some string $w$ not in $D_i$ will eventually be output by the adversary at some finite time $T > i$, and all the languages $\lanj{t}'$ for $t \geq T$  would contain this string $w$. This implies  $D_{T+1} \neq D_i$.

Therefore, we can partition the time stamps $\mathbb{N}_{\geq 1}$ into infinitely many intervals of finite lengths $[a_1, a_2-1], [a_2, a_3-1], [a_3, a_4-1], \dots $ etc such that within each interval the sets $D_i$ are identical. In other words, 
for each $s \geq 1$, \[ D_{a_s} = D_{a_s+1} = \dots = D_{a_{s+1}-1} \subsetneq D_{a_{s+1}}.\]

We will conduct the following procedure to obtain a subsequence of $\bar{\mathcal{L}}'$ which is an infinite perfect tower. For each interval $[a_s, a_{s+1}-1]$, by the definition of $D_i$'s,
there is an string $w \in D_{a_{s+1}} \setminus D_{a_s}$ that is not shared by  all the languages $\lanj{a_s}', \lanj{a_s+1}', \dots, \lanj{a_{s+1}-1}'$. Thus there exists an index $i_s$ where $a_s \leq i_s < a_{s+1}$  such that $\lanj{i_s}'$ does not contain this string $w \in \lanj{a_{s+1}}' \setminus \lanj{a_s}'$. We remove all the languages $\lanj{a_s}', \lanj{a_s+1}', \dots, \lanj{a_{s+1}-1}'$ except this single language $\lanj{i_s}'$. Notice that the string $w$ satisfies $w \in \lanj{t}'$ for all $t \geq a_{s+1}$, and in particular, $w \in \lanj{i_{s+1}}'$ 
but $w$ is not in $\lanj{i_s}'$.

We simultaneously conduct this replacement for all the intervals at once, and obtain a new infinite sequence
\[ \bar{\mathcal{L}}^*= (\lanj{i_1}'
, \lanj{i_2}', \dots).\] Define $B_k' = (\bigcap_{t \geq k} \lanj{i_t}' ) \setminus (B_1' \cup B_2' \cup\dots \cup B_{k-1}')$ recursively. Our argument earlier shows that some string in $\bigcap_{t \geq s+1} \lanj{i_t}'$ is not in $\lanj{i_s}'$, and thus $ B_{s+1}' \neq \emptyset$. Furthermore, by Claim \ref{claim:heredity} and the definition of an infinite feasible sequence, we have that $\bar{\mathcal{L}}^*$ is an infinite perfect tower. 

However, by our assumption  that the breadth is at most $\epsilon$ and by our choice of $\bar{\mathcal{L}}$, each language in the infinite perfect tower $\bar{\mathcal{L}}^*$ has upper density at most $\epsilon$ in $K$. This contradicts with the assumption that there is no infinite perfect tower with upper density less than $\epsilon$. 

\subsection{Determining Index-based Breadth in the general case}
We generalize Theorem \ref{thm:1} to determine the worst-case breadth for algorithms which guarantee validity. 
\begin{dfn}[Truth index]
Fix a collection of countably many distinct languages $\coll$ and the true language $K \in \coll$. Define the {\it truth index} with respect to this collection $\coll$  and $K$ to be the smallest $\tau \geq 0$ such that for any $\epsilon > 0$, there exist an infinite perfect tower of languages $(\lang{i_1}, \lang{i_2}, \dots)$ with respect to $K$ from $\coll$ such that the upper densities $\dup(\lang{i_j}, K) \leq \tau +\epsilon$ for all $j \geq 1$.

In the case when there is no infinite sequence of languages $(\lang{i_1}, \lang{i_2}, \dots)$ from $\coll$ that could form an infinite perfect tower with respect to $K$, 
we define the truth index $\tau$ with respect to $\coll$ and $K$ to be $1$. 
\end{dfn}

Below is an example showing the existence of any given truth index $0 < \tau \leq 1$. 
\begin{example}
Consider any enumeration of rational numbers in $[\tau, 1]$. Let $Q(n)$ be the first $n$ such rational numbers. For each integer $i \geq 1$, let the language $L_i$ be $Q(i) \cup (\mathbb{Q} \cap [0, \tau])$.  Let the true language $K = \mathbb{Q} \cap [0,1]$. 
     Thus the truth index with respect to this set of languages and $K$ is $\tau$. 
\end{example}

The theorem below is a generalization of Theorem \ref{thm:1}, which determines the breadth value given any countable set of languages and the true language $K$, over the worst-possible adversary and any clever algorithm that guarantees validity. It essentially says that the breadth is the truth index. More precisely, it is given by the following theorem. 
\begin{thm}[Determining the index-based breadth value] \label{thm:2}
Fix any list of countable number of languages $\coll$ and a true language $K \in \coll$. Let $\tau \in [0,1]$ be the truth index with respect to $\coll$ and the true language $K$. Then the following hold. 
\begin{enumerate}
    \item For any algorithm $\ma$ in the KM model that guarantees validity, the adversary has a way to generate input such that the index-based breadth of the algorithm $\ma$ with respect to $\coll$ and $K$ is at most $\tau$.
    \item There is an algorithm $\ma$ in the KM model such that for any adversary input, the index-based breadth of the algorithm $\ma$ with respect to $\coll$ and $K$ is at least $\tau$. 
\end{enumerate}
\end{thm}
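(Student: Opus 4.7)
The plan is to reduce each part of Theorem~\ref{thm:2} to the corresponding direction of Theorem~\ref{thm:1}, replacing the absolute threshold $\epsilon$ used there by $\tau + \epsilon$ on the adversary side and by $\beta + \epsilon < \tau$ on the algorithm side. For part~1, fix any $\epsilon > 0$ and, by the definition of the truth index, pick an infinite perfect tower $(\Lambda_1, \Lambda_2, \ldots)$ with respect to $K$ whose languages all satisfy $\dup(\Lambda_j, K) \leq \tau + \epsilon$. Run the adversary strategy from the ``if'' direction of Theorem~\ref{thm:1} verbatim on this tower: Case~1 when all $B_j$ are finite, and Case~2 otherwise (interleaving strings in $B_j$ with strings in $\Lambda_j \setminus B_j$ to guarantee a full enumeration of $K$). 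That construction produces an infinite sequence of time stamps $T_1 < T_2 < \cdots$ at which validity of $\ma$ forces $\lang{i_{T_s}} \subseteq \Lambda_{j_s}$ for some $\Lambda_{j_s}$ in the tower, so $\dup(\lang{i_{T_s}}, K) \leq \dup(\Lambda_{j_s}, K) \leq \tau + \epsilon$ by monotonicity of upper density in its first argument. Hence the breadth of $\ma$ is at most $\tau + \epsilon$; letting $\epsilon \to 0$ yields $\beta^*(\ma) \leq \tau$.

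For part~2, take $\ma = \Acc$, the algorithm from Theorem~\ref{thm:acc}, which already guarantees validity. Suppose for contradiction that $\beta^*(\Acc) < \tau$, and pick $\beta,\epsilon > 0$ with $\beta^*(\Acc) \leq \beta$ and $\beta + \epsilon < \tau$; this is possible because the set of $\beta$'s satisfying ``breadth at most $\beta$'' is upward-closed. By the definition of ``breadth at most $\beta$,'' the adversary can enumerate $K$ so as to produce an infinite sequence of time stamps $t_1 < t_2 < \cdots$ at which condition~(1) or~(2) of the breadth definition holds. Validity of $\Acc$ rules out condition~(1) for all sufficiently large $t_j$, so after passing to a tail we may assume every $\lang{i_{t_j}} \subseteq K$ satisfies $\dup(\lang{i_{t_j}}, K) \leq \beta + \epsilon$. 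Now run the extraction argument from the ``only if'' direction of Theorem~\ref{thm:1}: the subsequence $(\lang{i_{t_j}})$ forms an infinite feasible sequence by Claim~\ref{claim:f1}, and the selection-and-pruning procedure there (build $\bar{\mathcal{L}}'$ from $\bar{\mathcal{L}}$ using Claims~\ref{claim:K'unionisK}--\ref{claim:K'infinite}, then prune each interval $[a_s, a_{s+1}-1]$ to enforce $B_k' \neq \emptyset$) produces an infinite perfect tower whose languages form a sub-subsequence of $(\lang{i_{t_j}})$. Each of those languages still satisfies the same density bound $\leq \beta + \epsilon < \tau$, so the resulting tower contradicts the minimality of $\tau$ in the definition of the truth index.

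The main obstacle is verifying that the two reductions really do go through without modification when the threshold changes from an absolute constant to a value depending on $\tau$. In part~1 this is essentially automatic: the adversary construction in Cases~1 and~2 of Theorem~\ref{thm:1} uses only the tower properties (stronger completeness and nonempty $B_j$'s), not the particular density bound on its languages, and monotonicity $\dup(A,K) \leq \dup(B,K)$ for $A \subseteq B$ propagates the bound $\tau + \epsilon$ from the tower to the algorithm's guesses. In part~2 the delicate step is that the extraction in the ``only if'' proof selects its tower as a \emph{subsequence} of its input feasible sequence, so any upper bound on the densities of the input languages is automatically inherited by the languages of the output tower; the argument is purely structural and makes no use of the numerical value of the bound. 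Once these two transfers are in place, the minimality property built into the definition of $\tau$ closes the contradiction in part~2 and letting $\epsilon\to 0$ closes part~1.
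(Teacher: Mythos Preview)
Your proposal is correct and takes essentially the same approach as the paper: both parts are reduced to the corresponding directions of Theorem~\ref{thm:1} by replacing the absolute threshold $\epsilon$ there with $\tau+\epsilon$ (for Item~1) and with some $\beta+\epsilon<\tau$ (for Item~2), using $\Acc$ as the witnessing algorithm. The paper's own proof is only a sketch stating exactly this reduction, so your write-up is in fact more explicit than the original.

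One small omission to flag: in Part~1 your first step ``pick an infinite perfect tower with densities $\leq \tau+\epsilon$'' relies on the defining property of the truth index, but that property is \emph{vacated} in the degenerate case where no infinite perfect tower with respect to $K$ exists at all (the definition then sets $\tau=1$ by fiat). In that case your tower selection fails, yet the conclusion ``breadth $\leq 1$'' is trivially true. The paper handles this by treating $\tau=1$ separately at the outset; you should add a one-line remark to the same effect. Your Part~2 argument, by contrast, already covers this degenerate case automatically: extracting a tower with densities $\leq \beta+\epsilon<\tau$ contradicts both ``no tower exists'' and ``$\tau$ is minimal.''
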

Note that Theorem \ref{thm:1} is the special case of this theorem when the truth index $\tau = 0$. 

\begin{proof}
    \emph{(Sketch.)} We first prove the theorem when $
    \tau= 1$. Item 1 is obvious as breadth is always at most one. To prove Item 2, the proof is almost identical to the proof  of Theorem \ref{thm:1}. The only difference is that here we also need to consider the case when there is no infinite perfect tower. This case is already handled in the ``only if" part proof of Theorem \ref{thm:1}.
    The proof for the general case when $0 < \tau < 1$ is almost identical to the proof of Theorem \ref{thm:1}. 
\end{proof}

\section{Breadth vs Validity - Element-based}

For the remainder of the paper, we focus on 
{\em element-based generation} in the limit.
Recall that this is the original definition of generation:
in each time step $t$, the algorithm outputs a string $\out_t$, and the goal is to have $\out_t \in \trueL - \seen_t$ for all $t$ after some finite $\fint$.
We will say that an element-based generation algorithm {\em guarantees validity} if it satisfies this property.

Now, we begin by recalling the definition of $O(E,\ma)$ from Section \ref{sec:intro:density}.
Let $E = \{w_1, w_2, \dots\}$ be any adversary sequence enumerating a language $K$, and let $\ma$ be any algorithm in the KM model that generates from $K$ in the limit (thus guarantees validity). In each time step the adversary outputs a string $w_t$, and $\ma $ outputs an string $o_t$ in reply. Let the infinite set $O(E,\ma) =\{o_1, o_2, \dots\}$ consist of all strings that the algorithm $\ma$ ever outputs. 

The central question in this section and the next is the following.
\begin{quote}
    What is the density of $O(E, \ma)$ in $K$? In particular, does the requirement of validity inherently impose constraints that limit the density of $O(E, \ma)$ in $K$?
\end{quote}

We will study this question for upper density in the current section, and the more complex version of the question for lower density in the following section.

Before we get this, we first observe an important point about the ordering of the strings and its role in defining density.
Recall that there is an underlying set $U$ of all strings; each language $\lang{i} \in \coll$ is an infinite subset of $U$.
For evaluating density, we need an ordering on the strings in each language $\lang{i}$, since we need to be able to speak about the first $N$ strings of $\lang{i}$, and then to take the limit as $N$ increases.
We will assume that the ordering of strings is consistent across all languages.  (For example, it might be simply inherited from the ordering of strings in $U$.) That is, if \( v_1 \) precedes \( v_2 \) in some language \( \lan \), then for any other language \( \lan' \) containing \( v_1 \) and \( v_2 \), it must also hold that \( v_1 \) precedes \( v_2 \) in \( \lan' \).
Without a universal ordering of strings, this notion of  density becomes ill-defined. For instance, two languages \( K \) and \( K' \) might contain almost the same set of strings but have very different orderings. In such a case, the algorithm would be unable to determine which language is the true one, potentially leading to the same output sequence yielding vastly different densities in \( K \) and \( K' \). An instance of this phenomenon is illustrated in Example \ref{example:reorder}.

\subsection{Upper density guarantee for element-based breadth in the limit}

\begin{thm}
There exists an algorithm $\ma$ in KM model that guarantees validity such that, for any $c < 1 $ and any enumeration $ E $ of the true language $ K $ by an adversary, the output $ O $ always ensures that the set $O$ has an upper density of at least $ c/2 $ within $ K $.
\end{thm}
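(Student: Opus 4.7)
The plan is to construct $\ma$ by layering an output rule on top of the algorithm $\Acc$ of Theorem~\ref{thm:acc}. At each step $t$, given the index $i_t$ produced by $\Acc$ and writing $O_{t-1} := \{o_1, \ldots, o_{t-1}\}$, let $\out_t$ be the smallest-indexed element (in the universal ordering on $U$) of $\lang{i_t} \setminus (\seen_t \cup O_{t-1})$. This is well-defined since each $\lang{i_t}$ is infinite while $\seen_t \cup O_{t-1}$ is finite. Validity is immediate from Theorem~\ref{thm:acc}: for every $t \geq \fint$ we have $\lang{i_t} \subseteq \trueL$, so $\out_t \in \trueL - \seen_t$.

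To bound the upper density, I would exploit the infinite sequence of \emph{accurate} time steps $t_1 < t_2 < \cdots$ guaranteed by $\Acc$, at which $\lang{i_{t_j}} = \trueL$. At each such $t_j$, $o_{t_j}$ is the smallest element of $\trueL$ not yet in $\seen_{t_j} \cup O_{t_j-1}$; writing $m_j$ for its $\trueL$-index, minimality forces the entire initial segment $\{k_1, \ldots, k_{m_j}\}$ to lie in $\seen_{t_j} \cup O_{t_j}$. Since $|\seen_{t_j}| \leq t_j$, at least $m_j - t_j$ elements of this segment must lie in $O_{t_j}$, so the density of $O(E, \ma)$ in $\trueL$ at $N = m_j$ is at least $1 - t_j / m_j$. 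Because $m_j \leq |\seen_{t_j} \cup O_{t_j-1}| + 1 \leq 2 t_j$, this bound is at most $1/2$, with near-equality exactly when $m_j$ is close to $2t_j$.

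The crux is to show $\dup(O(E, \ma), \trueL) \geq c/2$ for every $c < 1$, i.e.\ that the density approaches $1/2$ on some subsequence. I would argue by a case analysis on the union $\seen_t \cup O_{t-1}$ along accurate times. In the efficient case, this union has size close to its maximum $2t - 1$ and is almost an initial segment of $\trueL$, so $m_j / t_j \to 2$ along a subsequence, and the bound at $N = m_j$ converges to $1/2$. In the inefficient case, slack appears either because the adversary repeatedly echoes previous algorithm outputs (making $|\seen_t \cap O_{t-1}|$ large) or because $\Acc$ at non-accurate steps outputs strings with large $\trueL$-index. Any such slack is concentrated \emph{outside} the initial segment $\{k_1, \ldots, k_{m_j}\}$, and a separate counting argument picks a larger witness $N$ at which $\{k_1, \ldots, k_N\}$ is almost fully covered by $O$, giving density close to $1$ at that $N$.

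The main obstacle is the inefficient case: one must choose the witness $N$ carefully so that the ratio $|O(E, \ma) \cap \{k_1, \ldots, k_N\}|/N$ captures the algorithm's contributions that ``overshoot'' the initial segment, while tracking the three sources of strings in $\seen_t \cup O_{t-1}$ (fresh adversary enumerations, fresh algorithm outputs, and overlaps between them). The finite set of pre-$\fint$ outputs lying outside $\trueL$ does not affect $\limsup_N$; the more delicate contribution is the infinite stream of non-accurate outputs of $\Acc$, handled by the observation that such outputs can only shift covering mass to larger $\trueL$-indices. Merging the two regimes and letting $t_j \to \infty$ then yields $\dup(O(E, \ma), \trueL) \geq c/2$ for every $c < 1$.
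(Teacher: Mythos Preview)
Your approach is genuinely different from the paper's, and the gap lies in your ``inefficient case.'' The paper does not layer a simple output rule on top of $\Acc$; instead it builds a \emph{lazy} variant $\Acc'$. Each time $\Acc$ would move to a strictly larger language (a ``rich index''), $\Acc'$ freezes on that language and repeatedly outputs its smallest-unused elements until the ordered density of all outputs so far, measured \emph{in that language}, reaches $c/2$. Only then does it refresh and examine the interval of $\Acc$-indices it skipped, again seeking a maximal rich language to freeze on. After validity, $K$ is the unique maximal consistent language, so any skipped interval containing an accurate time eventually forces $\Acc'$ to freeze on $K$ itself, driving ordered density in $K$ to $c/2$ at that moment; these moments supply the witnesses for $\dup$. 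The laziness is what manufactures the density witness directly, and validity of $\Acc'$ must be argued separately (it does not follow from $\Acc$'s, since $\Acc'$ outputs from stale languages during lazy stretches).

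Your rule outputs only one element per accurate step, so the whole burden falls on showing that either $m_j/t_j \to 2$ along a subsequence or the non-accurate outputs create a high-density window at some larger $N$. The second claim is unsubstantiated and appears to fail. Suppose $\Acc$ spends its non-accurate steps in a fixed consistent language $\lang{*} \subsetneq K$ of upper density $\epsilon$ in $K$ (this is exactly the situation the adversary can engineer via an infinite perfect tower, as in Theorem~\ref{thm:1}). Then every non-accurate output of your rule lies in $\lang{*}$, so those outputs contribute density at most $\epsilon$ at \emph{every} $N$; there is no larger $N$ at which they ``almost fully cover'' an initial segment. For the accurate outputs, the refined bound $|\seen_{t_j}\cap[m_j]_K| \le |\lang{*}\cap[m_j]_K| + |\seen_{t_j}\setminus\lang{*}| \lesssim \epsilon m_j + j$ (where $j$ counts accurate steps, since each $w_t\notin\lang{*}$ forces accuracy) yields density at $N=m_j$ of order $1-\epsilon - j/m_j$, which need not approach $c/2$ for $c$ near $1$ once $\epsilon$ is bounded away from $0$. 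Your sketch provides no mechanism to close this gap; the paper's laziness is precisely that mechanism.
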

Recall that, as we discussed earlier in the introduction, since the algorithm is not allowed to repeat an element generated by the adversary, the highest achievable upper density is at most 
$1/2$.

We first need a definition for the ``finite" version of the upper density.  Given a finite set $O_t = (o_1, o_2, \dots, o_t)$ and an infinite ordered set $K = (v_1, v_2, \dots)$, where $O_t \cap K \neq \emptyset$. The {\it ordered density} of $O_t$ in $K$   is defined as 
 \[
 \dor(O_t, K) = \frac{|O_t \cap K|}{N_T}
\]
 where $|\cdot|$ is the cardinality of a finite set, and $N_T$ is the minimum string index in $K$ such that $(O_t \cap K ) \subset \{v_1, \dots, v_{N_T}\}$.
\begin{proof}


Let us denote the algorithm in Theorem \ref{thm:acc}, which guarantees accuracy infinitely often, as $\Acc$. We aim to adapt this algorithm to ensure that for any $ c < 1$, there exists a modified algorithm $\Acc' $ such that, regardless of the adversary's enumeration $ E $, the output set $O(E, \Acc') $ consistently has an upper density of at least $ \frac{c}{2} $ in the true language $K$.

The new algorithm $\Acc'$ is essentially a ``lazy" version of $\Acc$. We describe and couple $\Acc$ and $\Acc'$ in the following way. 

Given an enumeration $E$, for any time $t$, let $i_t$ be the index of the language identified by $\Acc$. Therefore, 
the indices of the languages guessed by $\Acc$ at times $1,2,\dots$ would be $i_1, i_2, \dots$ (Note that the indices $i_t$ only depend on the first $t$ strings in the sequence $E$). We say an index $i_t$ is {\it rich} if the language $\lang{i_t}$  is a strict super set of the language  $\lang{i_{t-1}}$. Note that the rich indices are {\it completely} determined by the adversary input $E$ and the algorithm $\Acc$.

The output of the algorithm $\Acc'$ is defined sequentially as follows:

The language index identified by $\Acc'$ initially follows the same index identified by $\Acc$. Thus, at time $t$, $\Acc'$ identifies $\lang{i_t}$, just as $\Acc$ does, until a specific time $t_1$, which is the first point when a rich index is encountered. Starting from $t_1$, $\Acc'$ becomes {\it lazy}. At this stage, it begins continuously outputting strings from $\lang{i_{t_1}}$ that have not been used previously by either $\Acc'$ or the adversary. These strings are selected in increasing order according to the underlying string ordering. 
This process continues until time $t_2 - 1$, at which point the ordered density of the outputs $(o_1, o_2, \dots, o_{t_2-1})$  is at least $c/2$ within $\lang{i_{t_1}}$. Achieving this is feasible because, starting from $t_1$, the algorithm exclusively outputs strings from $\lang{i_{t_1}}$. Finally, at time $t_2$, the algorithm $\Acc'$ performs a refresh.

During the time interval from $ t_1+1 $ to $t_2 $, the new algorithm $\Acc'$ might have identified different languages compared to $\Acc$. Specifically, during this time interval, while the new algorithm $\Acc' $ is only identifying $L_{i_{t_1}}$, the original algorithm $\Acc$ would have identified a sequence of languages $ \lang{i_{t_1+1}}, \dots, \lang{i_{t_2}} $. Let the indices of this sequence be referred to as the {\it missed interval} at time $ t_2 $, denoted by $\mathcal{M}(t_2) = (i_{t_1+1}, \dots, i_{t_2}) $. These indices represent the set of languages that $\Acc$ would have identified between time $ t_1+1 $ and time $ t_2 $.

If no index in $\mathcal{M}(t_2) = (i_{t_1+1}, \dots, i_{t_2})$ is rich, then starting at time $t_2$, $\Acc'$ will refresh, output an string from $\lang{i_{t_2}}$, and repeat the process, i.e., output repeatedly strings from $\lang{i_{t_2+1}}, \lang{i_{t_2+2}}, \dots$ until finding the next index that is rich.

If there are indices in $\mathcal{M}(t_2) = (i_{t_1+1}, \dots, i_{t_2})$ that are rich, then there are some indices corresponding to languages  which are maximal with respect to string inclusion. Pick the latest  timestamp between $t_1+1$ and $t_2$ which corresponds to a maximal rich index, say it is $i_{t_1'}$ with $t_1+1 \leq t_1' \leq t_2$. Algorithm $\Acc'$ will then output from $\ga{t_1'}$ until sometime $t_3 -1 > t_2$ when the ordered density is at least $c/2$ in $\ga{t_1'}$.

Now at timestamp $t_3$, and we can again define the set of missed indices $\mathcal{M}(t_3) = (i_{t_1'+1}, \dots, i_{t_3})$. If no index there  is rich, then $\Acc'$ will refresh, and at time $t_3$ output an string from $\lang{t_3}$ and repeat. 

If some index in $\mathcal{M}({t_1'+1, t_3}) = (i_{t_1'+1}, \dots, i_{t_3})$ is  rich, then we find the timestamp in this interval whose corresponding language is rich and maximum (break the tie by the latest time stamp), say index $t_2'$, for some $t_1'+1 \leq t_2' \leq t_3 $, then $\Acc'$ will repeatedly generate strings from $\ga{t_2'}$ until time $t_4-1$ so that the ordered density is at least $c/2$ in $\ga{t_2'}$, and repeat by setting $\mathcal{M}({t_4}) = (i_{t_2'+1}, \dots, i_{t_4-1})$, and repeat.

We now claim that $O(E, \Acc') $ has an upper density of at least $c/2 $ in the true language $K = \lang{z} $. By the validity of $\Acc$, we can assume that, after some point in time, $\Acc$ only identifies languages that are subsets of $ K $. Consequently, after some finite time, if a missing interval contains the index $z $, then $z $ will always remain rich within this interval. As a result, the true language $K = \lang{z}$ will consistently be the maximum language and $z$ is always rich.

We prove that there exist infinitely many timestamps $T_1 < T_2 < T_3 < \dots $ at which the ordered density of the algorithm $\Acc'$ in $ K $ is at least $c/2$. This result implies the theorem, as the algorithm always outputs the earliest unused string in $K $. Consequently, suppose at a particular timestamp $T_j$, the output is the $\ell$-th string in $K$, then up till $T_j$, the first $\ell $ strings in $ K $ are fully occupied either by the adversary or by $\Acc'$, which aligns with the definition of the upper density when picking the first $ \ell $ strings in $ K $.

We now prove by contradiction. Suppose after some time $T$, the ordered density of the algorithm $\Acc'$ in $K$ is always strictly less than $c/2$.  We may also assume after $T$, the language identified is always a subset of $K$. 
Theorem \ref{thm:acc} implies that $K$ will be identified by $\Acc'$ infinitely often.

Let $i_t' $ denote the index of the language identified by $\Acc'$ at time $t $. Recall the language identified by $\Acc$ at time $t$ is $\lang{i_t}$. If $i_t' \neq i_t $, then $i_t $ must belong to some missing interval. 
Note that by Theorem \ref{thm:acc}, the true language $K = \lang{z}$ will appear infinitely often. If $z = i_t'$, then by our algorithm $\Acc'$, the ordered density in $K$ will be larger than $c/2$ for some time after $T$, which is a contradiction. Therefore $z \neq i_t'$. 
Thus the index $ i_t $ lies in a missing interval. By the argument above, $K$ will be the maximum language whose index is rich in this missing interval. By the description of the algorithm, the algorithm will address this missing interval, by repeatedly outputting strings from $ K $ (be lazy) until the ordered density in $ K $ reaches at least $ c/2 $, which leads to a contradiction again. 
\end{proof}

The lazy version of the algorithm in this proof might produce many more strings that do not belong to $\trueL$ compared to the original algorithm it is adapted from; this could be viewed as a kind of increased {\it hallucination}.  Despite this, however, the lazy algorithm still achieves element-based generation in the limit, and in the process its output set achieves high upper density.

The result here is very much oriented toward guaranteeing a large upper density in $\trueL$.
By itself, it does not rule out the possibility that the lower density could be very small. 
We take up this question in the next section, where we design an algorithm whose output set $O(E,\ma)$ has a non-trivial lower density, in that it will be lower-bounded by an absolute constant.

\section{Lower density guarantee in element-based breadth in the limit}
\label{sec:element:lower}

In general, guaranteeing a large lower density is much more challenging than guaranteeing a large upper density.  
To guarantee a large upper density, it suffices to find a sequence of (possibly very sparse) positions \( N_1, N_2, \dots \to \infty \) such that for each \( i \), the intersection of the algorithm's overall output \( O(E,\ma) \) with the first \( N_i \) strings in \( K \) has a large size relative to \( N_i \).  
However, to guarantee a high lower density, we must show that for all but finitely many positions \( N \), the intersection of \( O(E,\ma) \) with the first \( N \) strings in \( K \) has a large size relative to \( N \).  

It is not hard to construct examples where the lower density and upper densities differ significantly. 
Here is a basic example.  For each $k$, define the interval $I_k = [3^k, 2 \times 3^k]$, so $|I_k| = 3^k + 1$. Define the gap interval $J_k = [2 \times 3^k + 1, 3^{k+1} - 1]$, so $|J_k| = 3^k - 1$. Together, the intervals $I_k$ and $J_k$ form a partition of the set of positive integers. 
Consider a language $\lan = \bigcup_{k \geq 0} I_k$.
Then the upper density of $\lan$ in $\mathbb{N}$ is as large as $3/4$. However the lower density is at most $1/2$. 
Moreover, by modifying this example, the upper and lower densities can be made arbitrarily far apart.

\paragraph{Positive lower density}
Initially, it might seem
that regardless of the algorithm $\ma$ used to ensure validity, the adversary can always enumerate the strings of $K$ in a way that keeps the lower density of $O(E,\ma)$ in $K$ small. In fact, though, we will show there exists an algorithm $\ma$ that can achieve a non-trivial positive value $c > 0$ for the lower density on all instances.
This is the content of 
Theorem \ref{thm:element-lower-intro},
and the current section builds up to the proof of it.

The proof of Theorem \ref{thm:element-lower-intro} contains 
the most involved arguments in the paper, and so we build up to it in a few steps.
First, it is helpful to begin with some basic motivating examples in Section \ref{subsec:element:lower:examples}
that suggest at a high level why we adopted the definitions and approaches that we did. 
These examples become increasingly rich, and analyzing them requires some work in itself.
The examples reveal some structured types of enumeration strategies for the adversary in general, and 
suggest the need for a more formal description of 
how these strategies operate.
We do this by introducing a topology on the
collection of languages $\coll$, 
in Section \ref{subsec:element:lower:topology}.
The topological definitions make clear that a key aspect of what is going on in the examples can be described by the \emph{Cantor-Bendixson rank} of the topological space we've defined, and in 
Section \ref{subsec:element:lower:finite} we show how to handle the case in which this rank is finite.
Finally, we handle the general case, when the rank may be infinite, in Section \ref{subsec:element:lower:infinite}.

\subsection{Motivating examples}
\label{subsec:element:lower:examples}

We begin with a sequence of examples of increasing complexity, which help in understanding how different types of adversarial enumerations of the true language $\trueL$ might operate.

\begin{example}\label{exam:fallback}
    All the languages are subsets of positive integers. 
    Let the true language $K$ be the set of all positive integers. 
For each $k$, the language $\lang{k} = [k] \cup 100\mathbb{N_+} =  \{1,2,\dots, k\} \cup \{100,200,300,\dots\}$.  For notational convenience, we will say that the true language $K$ is denoted $\lang{0}$.

For example, $\lang{1} = \{1, 100,200,300, \dots\}$ and $$\lang{120} = \{1,2,3,4,5,\ldots,116,117,118,119,120,200,300,400, 500,\dots\},$$ where $\lang{120}$ contains the first 120 natural numbers before moving on to the subsequent multiples of 100.
(Note that the number ``100'' here is just a stand-in for any arbitrarily large constant.)
\end{example}

This example will turn out to be useful in understanding why guarantees for element-based generation can be stronger than guarantees for index-based generation, and in the process will show some of the strategies we'll employ for designing element-based generation algorithms.
In particular, the sequence of languages $\lang{1}, \lang{2}, \lang{3}, \ldots$ form an infinite perfect tower with respect to the terminal language $\trueL$, with each language $\lang{i}$ having upper density .01 in $\trueL$. 
Now suppose we have an algorithm $\ma$ that achieves index-based generation in the limit.
If we follow the adversary strategy used to prove Theorem \ref{thm:vanishing-char-intro}, we can think of an adversary that initially pretends the true language is $\lang{1}$ for a sufficiently long time, forcing the algorithm $\ma$ to eventually guess $\lang{1}$.
Then, the adversary can switch to pretending that the true language is $\lang{2}$ for a sufficiently long period, causing $\ma$ to eventually guess either $\lang{1}$ or $\lang{2}$, and this process continues indefinitely.  In this way, there is an infinite subsequence of time steps at which $\ma$ is guessing languages of upper (and hence also lower) density at most .01 in $\trueL$.

On first inspection, it might seem that if we now turn this into an algorithm for element-based generation in the limit, any such algorithm will spend almost all of its time generating multiples of 100, resulting in an output set $O(E,\ma)$ that is close to density 0.01 on this instance.
But this is where the element-based guarantee allows us to say something stronger.
Specifically, it doesn't matter if the algorithm $\ma$ infinitely often guesses languages $\lang{i}$ of low density provided that cumulatively, element by element over time, it ends up generating enough of $\trueL$.
We now show how this works, and how to achieve a set $O(E,\ma)$ with lower density close to 0.5 on this example.

The stronger algorithm for Example \ref{exam:fallback} works as follows. Each time the adversary enumerates a multiple of 100, the algorithm outputs the next unused string that is a multiple of 100. However, if the adversary enumerates a number $n$ that is not a multiple of 100, the algorithm outputs $n+1$.  This is the crucial point: the algorithm does this even though there is no ``evidence'' based on what it's seen so far that $n+1$ belongs to the true language $\trueL$.  To handle the special case where $n+1$ is already enumerated or happens to be a multiple of 100, we refine the rule: in such cases, the algorithm instead outputs the next unused natural number after $n$.

The first key observation is that this algorithm guarantees validity. If the adversary enumerates from $K$, then validity is trivially satisfied. On the other hand, if the adversary enumerates from some $\lang{k}$, there exists a time $t$ by which the adversary has enumerated all strings of ${1,2,\dots,k}$. After this time, the algorithm may output $k+1$ (or the next available number if $k+1$ is a multiple of 100), but beyond that, it will output only multiples of 100 indefinitely. Consequently, after a finite time, the algorithm enumerates only from the true language.

The second key observation is that this algorithm yields a lower density of approximately 0.5 for $O(E, \ma)$ in $\trueL$. 
If the adversary's true language is $K$, the algorithm and adversary effectively alternate strings of $K$ in increasing order. If the adversary's true language is $\lang{k}$, then they similarly alternate strings of the ``core'' subset in increasing order, aside from an additional finite set of strings. This alternating behavior can be used to directly show that the density approaches 0.5 in the long run.

Another way to view this algorithm --- which will be important for thinking about more complex examples --- is that each time the adversary pretends that the true language is some $\lang{i}$, the algorithm actually ``overshoots" by producing strings from $\lang{i+1}$ for a finite amount of time (in this particular example, overshooting by one time step). Since the algorithm only ``overshoots'' for a finite duration, validity is always guaranteed.  

Furthermore, due to this ``overshooting,'' when the true language is $\mathbb{N}_+$, the algorithm will not produce only multiples of 100 in the limit.  

This effect of {\it falling back} (i.e., overshooting) to a larger language proves particularly useful when designing algorithms with high lower density guarantees. However, determining the appropriate fallback language is highly non-trivial, as we must ensure both validity and a high lower density.  

The next example illustrates a collection of languages $\coll$ with some specific appropriate structure, and it allows us to talk about a richer case in which one can effectively assign the fallback language.

\begin{example}\label{example:1}
This example serves as the foundational test case for motivating the more general algorithm. It also further elucidates the basic advantage of ``falling back".

The construction of the languages works as follows. All the strings are subsets of $\mathbb{N}$. We start with some markers, at integers $a_0 = 0, a_1 = 3, a_2 = 3^2, a_3 = 3^3, \dots$. Let $\lang{n}$ be the language $\bigcup_{i = 0}^\infty [a_i, a_i + n]$.
For example, the first picture below illustrates $\lang{1}$. It means attaching an interval of length 1 at each of the marker $a_i$. The second picture below illustrates $\lang{4}$, where we attach an interval of length 4 at each of the marker $a_i$.

\vspace{0.2in}
\begin{tikzpicture}[scale=0.3]
    \draw[->] (0,0) -- (50,0) node[right] 
    {};
    
    \foreach \x in {0,3,9,27,81} {
        \draw[thick] (\x,0.2) -- ({\x+1},0.2);
        \node[below] at (\x,0) {\x};
    }
\end{tikzpicture}

\begin{tikzpicture}[scale=0.3]
    \draw[->] (0,0) -- (50,0) node[right] {};
    
    \foreach \x in {0,3,9,27,81} {
        \draw[thick] (\x,0.2) -- ({\x+4},0.2);
        \draw[thick] (\x,0.1) -- (\x,-0.1);
        \node[below] at (\x,0) {\x};
    }
\end{tikzpicture}
So we have an infinite sequence of nested languages $\lang{1} \subsetneq \lang{2} \subsetneq \lang{3} \subsetneq \cdots$. Let the true language be $K = \mathbb{N}$.

The algorithm which guarantees non-zero lower density is as follows.  Define a sequence of positive integers $N_1, N_2, \dots$ which goes to infinity quickly. 
At time $t$ suppose the adversary generates $w_t$, and let $\lang{i_t}$ be smallest language that is consistent (i.e., contains all the previous strings  $\seen_t = \{w_1, \dots, w_t\}$ generated by the adversary). As long as  $w_t \leq \max(N_{i_t}, o_1, \dots, o_{t-1}, w_1, \dots, w_{t-1})$, the algorithm will generate $w_t + 1$, or the smallest unused string. Otherwise,  the algorithm generates the next smallest unused string in $\lang{i_t}$. From this we can see that the lower density of $O(E, \ma)$ in $K$ is almost $1/2$ as long as $N_1, N_2, \dots$ go to infinity sufficiently fast.

This algorithm works because it has the ``fall back" feature: by manually forcing the algorithm to output $w_t + 1$ whenever $w_t$ is small. 
\end{example}

\begin{example}\label{example:2}

This example builds upon Example \ref{example:1} and provides further motivation for the general algorithm.

For each positive integer \( i \), let \(\lang{i}^{(0)}\) be the language $\lang{i}$ defined in Example \ref{example:1}. Define the strictly increasing function \( f_i^{(1)}(k): \mathbb{N}_+ \to \mathbb{N}_+ \) such that its image consists of the strings of \(\lang{i}^{(0)}\). Such an \( f_i^{(1)} \) is thus unique. For each positive integer \( j \), we construct the language \(\lang{i:j}^{(1)}\) so that its \( \ell \)-th string is given by \( f_i^{(1)}(\lang{j}^{(0)}(\ell)) \), where \(\lang{j}^{(0)}(\ell)\) denotes the \( \ell \)-th string of \(\lang{j}^{(0)}\), ordered from smallest to largest.

This construction ensures that
\[
\bigcup_{j=1}^\infty \lang{i:j}^{(1)} = \lang{i}^{(0)}.
\]
In essence, we have formed a hierarchical structure resembling a tree, where each level refines the previous one. The shape of this tree is as follows in Figure \ref{fig:ex2}.

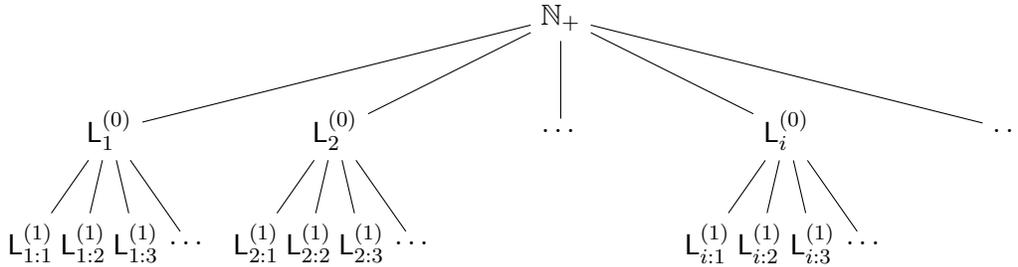
\begin{figure}[h]
\begin{tikzpicture}[
 level distance=1.5cm, 
  level 1/.style={sibling distance=3cm}, 
  level 2/.style={sibling distance=0.7cm} 
]
\node (N) {$\mathbb{N}_+$}
    child {node (K1) {$\lang{1}^{(0)}$}
        child {node (L1) {$\lang{1:1}^{(1)}$}}
        child {node (L2) {$\lang{1:2}^{(1)}$}}
        child {node (Ld) {$\lang{1:3}^{(1)}$}}
        child {node (La) {$\dots$}}
    }
    child {node (K2) {$\lang{2}^{(0)}$}
        child {node (Lb1) {$\lang{2:1}^{(1)}$}}
        child {node (Lb2) {$\lang{2:2}^{(1)}$}}
        child {node (Lbd) {$\lang{2:3}^{(1)}$}}
        child {node (Lb) {$\dots$}}
    }
    child {node (Kd) {$\dots$}}
    child {node (Ki) {$\lang{i}^{(0)}$}
        child {node (Li1) {$\lang{i:1}^{(1)}$}}
        child {node (Li2) {$\lang{i:2}^{(1)}$}}
        child {node (Lid) {$\lang{i:3}^{(1)}$}}
        child {node (Li) {$\dots$}}
    }
    child {node (Kd) {$\dots$}}

;  
\end{tikzpicture}
\caption{Illustration of Example \ref{example:2}}\label{fig:ex2}
\end{figure}

Figure \ref{fig:ex2} illustrates the hierarchical structure of the tree. The leaves under $\lang{1}^{(0)}$, represented by the sequence $\lang{1:1}^{(1)}, \lang{1:2}^{(1)}, \dots$, collectively form the union $\lang{1}^{(0)}$ and is an infinite perfect tower. As a result, $\lang{1}^{(0)}$ is labeled as the parent of these leaves, which are ordered accordingly. The same relationship holds for each $\lang{i}^{(0)}$ and its corresponding leaves.

This tree also exhibits a recursive structure: since $\lang{1}^{(0)}, \lang{2}^{(0)}, \dots$ form an infinite perfect tower and their terminal language (union) is $\mathbb{N}_+$, the root of the tree is naturally $\mathbb{N}_+$, with its children ordered. The root has infinitely many children, and each vertex $\lang{i}^{(0)}$ likewise has infinitely many children.

Now, consider the set of all languages appearing in the tree. The algorithm proceeds as follows: at each time step $t$, identify the smallest index $i_t$ such that $\lang{i_t}^{(0)}$ remains consistent with the adversary’s input so far. Next, find the smallest $j_t$ such that $\lang{i_t:j_t}^{(1)}$ is also consistent. These indices $i_t$ and $j_t$ are well-defined. Moreover, $i_t$ is monotonic in $t$, and $j_t$ is also monotonic when restricted to instances where $i_t = i$.

The key challenge is that the algorithm must correctly determine which language to fall back to, ensuring that it maintains consistency while guaranteeing good lower density.

\begin{enumerate}  
\item If $i_t = i_{t-1}$ but $j_t \neq j_{t-1}$, the algorithm aggressively guesses that the true language is $\lang{i_t}^{(0)}$. It identifies the largest string $w$ (with respect to $\lang{i_t}^{(0)}$) output by both the adversary and the algorithm up to time $t$. The algorithm then outputs the next smallest unused string in $\lang{i_t}^{(0)}$, provided it is at most $w+2$. If no such string exists, the algorithm instead outputs the next smallest unused string in $\lang{i_t:j_t}^{(1)}$. This behavior persists until the pair $(i_t, j_t)$ changes.

Falling back to $\lang{i_t}^{(0)}$ is justified. If the true language is indeed $\lang{i_t:j_t}^{(1)}$, the algorithm eventually restricts its outputs to strings from this set. However, if the true language is $\lang{i_t}^{(0)}$, relying solely on $\lang{i_t:j_t}^{(1)}$ could result in tiny low lower density. To prevent this, the algorithm temporarily falls back to $\lang{i_t}$ for a finite time.

At this stage, the algorithm cannot fall back to $\mathbb{N}$, as this would lead to incorrect behavior. For example, if the true language were $\lang{1}^{(0)}$, an adversary could successively pretending that the true languages are  $\lang{1:1}^{(1)}, \lang{1:2}^{(1)}, \lang{1:3}^{(1)}, \dots$, forcing the algorithm to fall back to $\mathbb{N}$ infinitely often. This would break the validity guarantee.

\item If $i_t \neq i_{t-1}$, the algorithm aggressively guesses that the true language is $\mathbb{N}_+$. It identifies the largest string $w$ (with respect to $\mathbb{N}_+$) output by both the adversary and the algorithm up to time $t$. From that point onward, as long as $(i_t, j_t, k_t)$ remains unchanged, the algorithm always outputs the next smallest unused string in $\mathbb{N}_+$ that is at most $w+2$. If no such string exists, it outputs the next smallest unused string in $\lang{i_t}^{(0)}$.  
\end{enumerate}  

In summary, the algorithm operates by iteratively identifying the lowest and leftmost consistent descendant of the root and falling back to one of its ancestors. It always falls back to the least common ancestor of two identified languages, if the identified minimum consistent language changes.

We now verify that this algorithm generates the true language in the limit while ensuring a large lower density of output strings.  

Case 1: The true language is some \( \lang{i:j}^{(1)} \).  
  After some finite time \( T \), the algorithm will generate only strings from \( \lang{i:j}^{(1)} \), since the strategy only changes when the indices \( i_t, j_t \) are updated.  

Case 2: The true language is some \( \lang{i}^{(0)} \).  
  In this case, the algorithm primarily generates strings from \( \lang{i:1}^{(1)}, \lang{i:2}^{(1)}, \dots \). However, each time the algorithm transitions to the next leaf in the tree, it temporarily falls back to \( \lang{i}^{(0)} \). This scenario mirrors the argument in Example \ref{example:1}, where applying the same algorithm to \( \lang{1}^{(0)}, \lang{2}^{(0)}, \dots \) with respect to \( \mathbb{N}_+ \) guarantees validity and ensures that the output maintains a large lower density within \( \lang{i_t}^{(0)} \).  

Case 3: The true language is \( \mathbb{N}_+ \). 
  A similar argument applies, except now we track changes in the index \( i_t \). Eventually, the algorithm will output a large fraction of the initial strings of \( \lang{1}^{(0)} \), then fall back to $\mathbb{N}$, and then output a large fraction of the initial strings of \( \lang{2}^{(0)} \), and so on. This ensures that the lower density remains large.  

\end{example}

 \begin{example}\label{example:3}
This is a slightly more complicated example illustrating that we can also build upon the previous example. 

For positive integers \(i, j, k\), define the function \(f_{i:j}^{(2)}(k): \mathbb{N}_+ \to \mathbb{N}_+\) to be the unique strictly increasing function such that its image is the set of strings in \(\lang{i:j}^{(1)}\).  

We will construct the language \(\lang{i:j:k}^{(2)}\) so that the \(\ell\)-th string is given by \(f_{i:j}^{(2)}(\lang{i:j}^{(1)}(\ell))\), where \(\lang{i:j}^{(1)}(\ell)\) is the \(\ell\)-th string of \(\lang{i:j}^{(1)}\), ordered from smallest to largest. Clearly, this construction ensures that  
\[
\bigcup_{k=1}^\infty \lang{i:j:k}^{(2)} = \lang{i:j}^{(1)}.
\]

What we have created is a tree of the following structure in Figure \ref{fig:tree2}.  

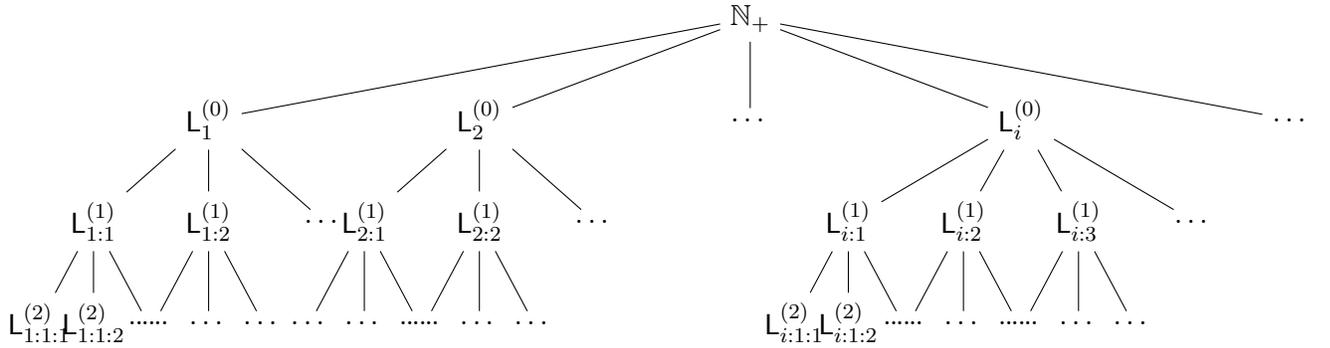
\begin{figure}[h]
    \centering
\begin{tikzpicture}[scale = 0.9,
     level 1/.style={sibling distance=4cm, level distance=1.5cm},
    level 2/.style={sibling distance=1.7cm, level distance=1.5cm},
    level 3/.style={sibling distance=0.8cm, level distance=1.5cm},
]
\node (N) {$\mathbb{N}_+$}

        child {node (L1) {$\lang{1}^{(0)}$}
            child {node (L11) {$\lang{1:1}^{(1)}$}
                child {node (L111) {$\lang{1:1:1}^{(2)}$}}
                 child {node (L112) {$\lang{1:1:2}^{(2)}$}}
                  child {node (L113) {$\dots$}}  
              }
            child {node (L12) {$\lang{1:2}^{(1)}$}
               child {node (L121) {$\dots$}}
                 child {node (L122) {$\dots$}}
                  child {node (L123) {$\dots$}}
               }
            child {node (La) {$\dots$}}
               }
        child {node (L2) {$\lang{2}^{(0)}$}
            child {node (L21) {$\lang{2:1}^{(1)}$}
                 child {node (L211) {$\dots$}}
                 child {node (L212) {$\dots$}}
                  child {node (L213) {$\dots$}}
                 }
            child {node (L22) {$\lang{2:2}^{(1)}$}
                 child {node (L221) {$\dots$}}
                 child {node (L222) {$\dots$}}
                  child {node (L223) {$\dots$}}      
            }
            child {node (Lb) {$\dots$}}
        }
        child {node (Kd) {$\dots$}}
        child {node (Li) {$\lang{i}^{(0)}$}
            child {node (Li1) 
                   {$\lang{i:1}^{(1)}$}
            child {node (Li11) {$\lang{i:1:1}^{(2)}$}}
            child {node (Li12) {$\lang{i:1:2}^{(2)}$}}
            child {node (Lbbb) {$\dots$}}
                    }
            child {node (Li2) {$\lang{i:2}^{(1)}$}
                   child {node (Li21) {$\dots$}}
                 child {node (Li22) {$\dots$}}
                  child {node (Li23) {$\dots$}} 
                  }
            child {node (Lid) {$\lang{i:3}^{(1)}$}
                 child {node (Li31) {$\dots$}}
                 child {node (Li32) {$\dots$}}
                  child {node (Li33) {$\dots$}}     
              }
             child {node (Lid) {$\dots$}}
        }
            child {node (Kd) {$\dots$}}
;

\end{tikzpicture}
    \caption{Illustration of a tree structure for Example \ref{example:3}}
    \label{fig:tree2}
\end{figure}

Let the collection $\coll$ of languages consist of all languages that appear in this tree. The algorithm is a generalization of the one in Example \ref{example:1}, and proceeds as follows: at each time \(t\), it searches for the leftmost language \(\lang{i_t}^{(0)}\) with which the current language is consistent. Under this \(\lang{i_t}^{(0)}\), it then looks for the leftmost \(\lang{i_t:j_t}^{(1)}\) that is consistent, and beneath this \(\lang{i_t:j_t}^{(1)}\), it looks for the leftmost \(\lang{i_t:j_t:k_t}^{(2)}\) that is consistent.

As \(t\) progresses, the algorithm behaves as follows:

When \(k_t \neq k_{t-1}\), but \(i_t = i_{t-1}\) and \(j_t = j_{t-1}\), the language falls back to \(\lang{i_t:j_t}^{(1)}\).

When \(j_t \neq j_{t-1}\), but \(i_t = i_{t-1}\), the language falls back to \(\lang{i_t}^{(0)}\).

When \(i_t \neq i_{t-1}\), the language falls back to \(\mathbb{N}_+\).

When we say that the algorithm ``falls back'' starting at some time $t$ to some language $\lan$,  we mean the following, which is similar to our plan in earlier examples: suppose at time \(t\), the correct consistent child string \(w\) is the largest string in $\lan$ that has been produced by the input and adversary up to time \(t\). From that point onward, as long as the algorithm does not change its strategy (i.e., does not change the minimum consistent language identified), it will always output the next unused string in $\lan$ that is at most \(w + 2\). Once no string smaller than \(w+2 \) remains in $\lan$, the algorithm continues by outputting from the appropriate child of $\lan$ in the tree. 

Again in this algorithm, once the identified language changes, the algorithm always falls back to to the smallest common ancestor of the two languages. 
\end{example}

\subsection{Setup: A topological point of view}
\label{subsec:element:lower:topology}

The general theorem is strongly motivated by the examples above, though the structure of the sets in these examples is relatively simple. First of all, in a general set of languages $\coll$, we might not be able to get the simple tree structure as in the examples. Furthermore, we might also not find the ``minimum" language that is consistent. Handling the general case requires involved technical details, as the set of languages can intersect each other in arbitrary ways.
It turns out that we can transfer our insights from the examples to the general case by introducing a topological structure on the set of languages; we will find that a number of the structures we discussed in the examples have natural analogues in the general case via topological definitions.

We now set up the topological structure that we use.
First, without loss of generality, we assume each language in $\coll$ is a subset of $\mathbb{N}$ (any discrete countable set has strings to be enumerated by $\mathbb{N}$). 
Given our countable collection of languages ${\mathcal{X}}$, we define a topology $\mathcal{T}$ on it as follows. For each language $\lan \in \coll$ and each finite subset $F$ of $\mathbb{N}$, 
define one of the basic open sets to be
\[
U_{\lan,F} = \{\lan' \in \mathcal{X} \mid F \subseteq \lan' \subseteq \lan\}.
\]
Let the collection of sets \( U_{\lan,F} \), ranging over all languages \( \lan \in\mathcal{X} \) and all finite sets \( F  \subset \mathbb{N} \), serve as a basis for the open sets. This defines a topology on \( \mathcal{X} \).  

This topological space is first-countable and Hausdorff. Most importantly, it possesses the key property we require, namely, that the definition of limit points aligns with that of the infinite perfect tower we have defined. 

\begin{claim}\label{claim:iptequiv}
  Language $\lan' \in \mathcal{X}$ is a limit point in $(\mathcal{X}, \mathcal{T})$ if and only if $\lan'$ is a terminal language of some infinite perfect tower.  
\end{claim}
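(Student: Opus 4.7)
The plan is to prove each direction separately, exploiting the fact that the basic open neighborhoods of $\lan'$ are exactly the sets $U_{\lan,F}$ with $F \subseteq \lan' \subseteq \lan$ and $F$ finite, and that the subfamily $\{U_{\lan',F} : F \subseteq \lan' \text{ finite}\}$ already forms a neighborhood basis at $\lan'$ (since $U_{\lan',F} \subseteq U_{\lan,F}$ whenever $\lan' \subseteq \lan$).

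For the ``if'' direction, suppose $\lan'$ is the terminal language of an infinite perfect tower $(\Lambda_1, \Lambda_2, \ldots)$, and fix any basic open neighborhood $U_{\lan,F}$ of $\lan'$. Because $F \subseteq \lan'$ is finite and every string of $\lan'$ is fixed by some language in the tower (property (iii) of Definition \ref{def:perfecttower-intro}), there is an index $J$ with $F \subseteq \Lambda_j$ for all $j \geq J$. Combining this with $\Lambda_J \subsetneq \lan' \subseteq \lan$ yields $\Lambda_J \in U_{\lan,F} \setminus \{\lan'\}$, so $\lan'$ is a limit point.

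For the ``only if'' direction, which is the main content, enumerate $\lan' = \{v_1, v_2, v_3, \ldots\}$ and I will inductively construct the tower $(\Lambda_k)_{k \geq 1}$ together with strictly increasing indices $0 = j_0 < j_1 < j_2 < \cdots$. At step $k \geq 1$, apply the limit-point hypothesis to the neighborhood $U_{\lan', \{v_1, \ldots, v_{\max(j_{k-1}, 1)}\}}$ of $\lan'$ to obtain some $\Lambda_k \in \coll$ with $\{v_1, \ldots, v_{\max(j_{k-1}, 1)}\} \subseteq \Lambda_k \subsetneq \lan'$, and then define $j_k$ to be the least index for which $v_{j_k} \notin \Lambda_k$. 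Existence of $j_k$ follows from $\Lambda_k \subsetneq \lan'$, and $j_k > j_{k-1}$ follows from $\{v_1, \ldots, v_{j_{k-1}}\} \subseteq \Lambda_k$.

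It then remains to verify the three defining properties of an infinite perfect tower. Property (i), $\Lambda_k \subsetneq \lan'$, is immediate from the construction. For property (iii), since $j_k \to \infty$, any $v_n \in \lan'$ satisfies $v_n \in \{v_1, \ldots, v_{j_{k-1}}\} \subseteq \Lambda_k$ for all sufficiently large $k$, so $v_n$ is fixed by some $\Lambda_K$. For property (ii), the ``newly fixed'' string at stage $k+1$ is $v_{j_k}$: for every $m \geq k+1$ we have $j_{m-1} \geq j_k$, so $v_{j_k} \in \{v_1, \ldots, v_{j_{m-1}}\} \subseteq \Lambda_m$, while $v_{j_k} \notin \Lambda_k$ by construction, giving $v_{j_k} \in B_{k+1}$; at stage $1$, $v_1 \in B_1$ since $v_1 \in \Lambda_k$ for all $k$. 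The step that I expect to require the most care when writing out is verifying that the freshly fixed strings $v_{j_k}$ truly land in $B_{k+1}$ rather than some earlier $B_i$, but this follows directly from the strict monotonicity $j_k > j_{k-1}$ coupled with the nested initial-segment containment baked into the construction.
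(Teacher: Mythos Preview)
Your proposal is correct and follows essentially the same approach as the paper's own proof: both directions use the observation that $\{U_{\lan',F} : F \subseteq \lan' \text{ finite}\}$ is a neighborhood basis at $\lan'$, and in the ``only if'' direction both construct the tower inductively by picking $\Lambda_k$ in the basic neighborhood determined by an initial segment of $\lan'$ and then letting the next segment length be the first index missed by $\Lambda_k$. Your write-up is in fact slightly more careful than the paper's in explicitly verifying that each $B_{k+1}$ is nonempty via the witness $v_{j_k}$.
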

\begin{proof}
One direction is straightforward. Suppose there exists an infinite perfect tower \( \lang{1}, \lang{2}, \dots \) with terminal language \( \lan' \). We show that \( \lan' \) is a limit point under \( \mathcal{T} \). It suffices to verify that each basis set \( U_{\lan', F} \) contains at least one language \( \lang{i} \) from the tower.  For each string \( a \in F \), let \( n_a \) be the index \( j \) such that \( B_j \) contains \( a \), where \( B_j \) is as defined in Definition \ref{def:perfecttower}. Such an \( n_a \) exists because the union \( \bigcup_j B_j = \lan' \) and \( a \in \lan'\), with the sets \( B_j \) being disjoint. Since \( F \) is finite, we can define \( N = \max_{a \in F} n_a \). Choosing any \( n > N \), we find that \( \lang{n} \) contains \( F \), so \( \lang{n} \in U_{\lan, F} \). Thus we have that  \( \lan'\) is a limit point.

For the converse, suppose \( \lan' \) is a limit point under \( \mathcal{T} \). By definition, each open set \( U_{\lan', F} \) contains a language \( \lang{i} \) distinct from \( \lan' \). Let \( F_i \) consist of the first \( n_i \) strings of \( \lan' \) for some $n_i$. Then let $\lang{i}$ be a language in \( U_{\lan', F_i} \) that is different from $\lan'$. Let $n_{i+1}$ be the minimal $j$ such that the ${j}$-th string in $\lan'$ is not in $\lang{i}$. Such $j$ exists because $\lang{i} \subsetneq \lan'$. By our construction, $n_{i+1} > n_i$. Repeatedly define the sequence $(n_i)_i$ and the sequence of languages $(\lang{i})_i$. 
This generates a sequence of distinct languages \( \lang{1}, \lang{2}, \dots \) such that each \( \lang{i} \subsetneq \lan' \) and contains the first \( n_i \) strings of \( \lan' \). By definition, this sequence forms an infinite perfect tower with terminal language \( \lan' \). 
\end{proof}

Given a topological space \((\mX, \mathcal{T})\), define its {\it derived set} \(d(\mX)\) as the set of limit points of \(\mX\) with respect to \(\mathcal{T}\). Using this, we work with a topological notion know as the {\it Cantor-Bendixson rank} of \(\mX\), defined as follows \cite{settheory}.

For a given set \(\mX\), we recursively construct a sequence of subsets \(\mX^{(i)}\) as follows. Let \(\mX^{(0)} = \mX\). Define \(\mX^{(1)} = d(\mX)\), the set of limit points of \(\mX\).
Recursively, for each integer \(i \geq 1\), set \(\mX^{(i+1)} = d(\mX^{(i)})\), i.e., the set of limit points of \(\mX^{(i)}\) in \(\mX\).

It is well known that the sequence \(\mX^{(i)}\) is nested and decreasing. This process terminates when the derived set becomes empty or stabilizes, meaning \(d(\mX^{(\alpha+1)}) = d(\mX^{(\alpha)})\) for some ordinal \(\alpha\). The smallest such \(\alpha\) for which \(\mX^{(\alpha)} = \mX^{(\alpha+1)}\) is called the {\it Cantor-Bendixson rank} of \((\mX, \mathcal{T})\).

If the process stabilizes to a non-empty set, this final set is known as the {\it perfect kernel} of \(\mX\). It is known that for any countable subset of a first-countable space, the Cantor-Bendixson rank is countable. Since our topology is first-countable, this applies in our setting. Note that a perfect kernel in our setting will be that every point is a limit point in this set. One could imagine that this should look like in Figure \ref{fig:tree2} where we have tree of infinite depth.

In the rest of the section, without loss of generality, we could assume the ground set of strings is the set of natural numbers $\mathbb{N}$, and $K$ is a subset of $\mathbb{N}$. $K$ consists of the set of ordered integers $\{\psi(1), \psi(2), \dots\}$. In other words, $\psi: \mathbb{N} \to \mathbb{N}$ is an increasing function where the $i$-th string in $K$ is $\psi(i)$. For each string $x \in K$, we denote $\Succ_K(x)$ to be the next string in $K$ following immediately after $x$. i.e., $\Succ_K(x) = \psi( \psi^{-1}(x)+1)$.

\subsection{Finite rank case}
\label{subsec:element:lower:finite}

We first prove a weaker version of our main result, in which we restrict to the special case in which the collection of languages has empty perfect kernel and finite Cantor-Bendixson rank with respect to the topology we have defined.  This turns out to be a useful first step, and  
some of the proof techniques will be important ingredients later in the general setting. 
Specifically, we will prove the following.

\begin{thm}\label{thm:finiteRankHighLD}
    There is an algorithm $\ma$ that achieves element-based generation in the limit and has the following property. Suppose the collection of languages $\coll$ has associated topology $(\mX, \mathcal{T})$ with empty perfect kernel and Cantor-Bendixson rank at most $r$ for some finite positive integer $r$. Then for any adversarial enumeration $E$ of one of the languages $\trueL \in \coll$, the set of output strings $O(E,\ma)$ generated by the algorithm has a lower density in $K$ that is at least $1/(3(r+1))$. 
\end{thm}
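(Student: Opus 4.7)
The finite rank assumption supplies a concrete hierarchy on $\coll$: each language $\lan$ receives a rank $\rho(\lan) \in \{0, 1, \ldots, r\}$, defined by $\lan \in \mX^{(\rho(\lan))} \setminus \mX^{(\rho(\lan)+1)}$, which is well-defined because the perfect kernel is empty. By Claim \ref{claim:iptequiv}, $\rho(\lan) \geq k+1$ precisely when $\lan$ is the terminal of some infinite perfect tower whose members all have rank $\geq k$; equivalently, a rank-$k$ language is isolated within $\mX^{(k)}$ under the topology $\mathcal{T}$, so it admits a finite ``witness'' $F \subseteq \lan$ such that no other language of rank $\geq k$ is sandwiched between $F$ and $\lan$. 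This stratification is the intrinsic topological version of the explicit tree structure used in Examples \ref{example:2} and \ref{example:3}, where the rank-$k$ languages play the role of nodes at depth $r-k$.

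Building on those examples' fallback strategy, the algorithm maintains at each time $t$ a nested chain of consistent current guesses $M_r^t \supseteq M_{r-1}^t \supseteq \cdots \supseteq M_0^t$, where $M_k^t$ is the lowest-indexed rank-$k$ language in $\coll$ that is consistent with $\seen_t$ and contained in $M_{k+1}^t$ (with the convention $M_{r+1}^t = U$). By default, the algorithm outputs the next smallest unused string in $M_0^t$. Whenever $M_k^t$ differs from $M_k^{t-1}$ for one or more indices $k$, the algorithm enters a ``fallback phase'' at the largest such $k$: it temporarily produces unused strings of $M_k^t$ lying just past the largest previously output string, continuing until its outputs inside $M_k^t$ achieve local ordered density at least $1/2$ within the initial segment of $M_k^t$ seen so far. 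This parallels the ``lazy'' construction from the previous section's upper-density algorithm, but applied independently at each rank level.

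Validity is verified rank by rank, adapting Lemma \ref{stmt:true-strictly-critical}. Let $k^* = \rho(K)$. Because the adversary enumerates every string of $K$, any consistent rank-$k$ language with $k \geq k^*$ must either equal $K$ or properly contain $K$, so after finite time $M_k^t \supseteq K$ for $k \geq k^*$ and $M_{k^*}^t = K$ itself. The same argument run inside $K$ shows $M_k^t \subseteq K$ for all $k \leq k^*$ after finite time. The finite rank $r$ bounds the number of fallback levels, so ``overshooting'' is triggered only finitely often before all levels stabilize. For the lower density bound, we partition time after this stabilization into epochs, each one a fallback phase at some rank $k \in \{0, \ldots, r\}$ or a ``default'' phase outputting from $M_0^t \subseteq K$. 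In each epoch the algorithm accumulates outputs in the relevant $M_k^t \subseteq K$ at local density roughly $1/2$; summing over the $r+1$ possible ranks and accounting for the adversary's own alternating outputs, a standard averaging argument yields total density at least $1/(3(r+1))$ of $O(E,\ma)$ in $K$.

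\textbf{Main obstacle.} The subtle point is that fallback phases at different ranks can cascade: when a rank-$k$ change triggers a fallback and shortly thereafter a rank-$k' > k$ change triggers another, the algorithm must switch rank-levels without forfeiting the density it had begun accumulating at the lower level, and without producing any string outside $K$ after stabilization. Handling this requires showing that the number of cascades at each level is finite and that the transitional time spent switching between epochs is dominated by the productive time spent inside fallback phases, an amortization argument in which the finite rank $r$ enters crucially, both in bounding the depth of cascades and in determining the $1/(r+1)$ share each level receives.
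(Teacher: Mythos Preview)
Your high-level plan has the right shape, but the validity argument contains a genuine gap that breaks the approach. You assert that ``any consistent rank-$k$ language with $k \geq k^*$ must either equal $K$ or properly contain $K$,'' and hence that after finite time $M_{k^*}^t = K$. This fails for two reasons. First, rank is not monotone in the subset order: there can be rank-$(k^*{+}1)$ languages that are incomparable with $K$ or even strict subsets of $K$. Second, and more seriously, there need not be \emph{any} rank-$(k^*{+}1)$ language in $\coll$ that contains $K$. In that case your chain $M_{k^*+1}^t$ is forced to be some language not containing $K$ (or is undefined), so $K$ is never even a candidate for $M_{k^*}^t$, and the levels below $k^*$ can output strings outside $K$ forever. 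Concretely, take $K$ of rank $0$, together with infinitely many rank-$1$ languages $A_i$ each containing the first $i$ strings of $K$ plus a fixed infinite set disjoint from $K$ (with their own rank-$0$ towers beneath them). None of the $A_i$ contain $K$, none of them witnesses $K$ as a limit point (since $A_i\not\subseteq K$), yet for every $t$ some $A_i$ is consistent; your $M_1^t$ cycles through the $A_i$ and $M_0^t\subseteq A_i$ need never be a subset of $K$.

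The paper avoids this by not attempting to build a static chain through all rank levels. Instead it runs the accuracy-guaranteeing algorithm $\Acc$ as the base (so the identified language $\Gamma_t$ is already $\subseteq K$ after finite time), and on top of it maintains a \emph{dynamic} tree $\mathcal D_t^*$ whose edges go from each language to the minimum \emph{strictly critical} strict superset at a higher level. The key step (Claims~\ref{claim:finiteTime} and~\ref{claim:finitetimeKancestor}) is that after finite time $K$ is always an ancestor of $\Gamma_t$ in $\mathcal D_t^*$; this is exactly where Claim~\ref{claim:nosamelevel} is used, and it is what makes ``fall back to the least common ancestor of $\Gamma_t$ and $\Gamma_{t+1}$'' a valid move. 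The density argument is also different from your epoch-averaging sketch: the paper defines a bad set $W$ of adversary strings $w_t$ with $o_{t-1},o_t>\Succ_K(w_t)$, and shows (Claim~\ref{claim:climbup}) that $W$ cannot contain $r{+}1$ consecutive strings of $K$, because each successive bad string forces the fallback language to climb to a strictly higher level. That combinatorial bound, together with a two-to-one charging of good adversary strings to nearby outputs, gives the $1/(3(r{+}1))$ directly. Your ``cascading fallbacks'' obstacle is precisely what this climbing argument handles, but it requires the dynamic-tree machinery rather than a fixed rank-indexed chain.
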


\begin{proof}
We first establish a hierarchy on the set of languages \(\mathcal{X}\) as follows.  

Since \(\mathcal{X}\) has an empty perfect kernel, there exist languages in \(\mathcal{X} \setminus d(\mathcal{X}) = \mathcal{X} \setminus \mathcal{X}^{(1)}\). Define this set as \(\mathcal{Y}_0\). More generally, for each \(i \geq 0\), let  
\[
\mathcal{Y}_i = \mathcal{X}^{(i)} \setminus \mathcal{X}^{(i+1)}
\]
where $\mathcal{X}^{(0)} = \mathcal{X}$, 
so that the sets \(\mathcal{Y}_i\) are disjoint and together form a partition of \(\mathcal{X}\).  

By construction, each language \(\my \in \mathcal{Y}_i\) is a limit point of \(\mathcal{X}^{(i)}\) but not of \(\mathcal{X}^{(i+1)}\). This implies that there exists an infinite sequence (an infinite perfect tower) of languages in \(\mathcal{X}^{(i)}\) accumulating at \(\my\), but no such infinite sequence exists in \(\mathcal{X}^{(i+1)}\).  
Specifically, the sets \(\mathcal{Y}_i\) consist of isolated points in \(\mathcal{X}^{(i)}\) with respect to the topology \(\mathcal{T}\). In particular, for each \(\my \in \mathcal{Y}_i\), there exists an open neighborhood \(N_\my\) containing \(\my\) such that no other point in \(\mathcal{Y}_i\) belongs to \(N_\my\).

We say languages in $\mathcal{Y}_i$ are of {\it level $i$}. Purely by the definition of the derived sets $\mathcal{X}^{(i)}$, we have the following two claims. 
    
The next claim is the most important property of the levels we would like to achieve. 
        \begin{claim}\label{claim:nosamelevel}
Suppose a language \(\mx\) is at the \(i\)-th level. Then \(\mx\) cannot be the terminal language of an infinite perfect tower of languages consisting only of languages at level \(i\) or higher.        \end{claim}
        \begin{proof}
If \( \mx \) is at level \(0\), the claim follows trivially from the definition of level \(0\) and \(\mathcal{Y}_0\). Now, assume \(\mx\) is at level \(j+1\) for some \(j \geq 0\), so that \(\mx \in \mathcal{X}^{(j+1)}\). If \(\mx\) were the terminal language of an infinite perfect tower consisting of strings at level \(j+1\) or higher, then we would have \(\mx \in d(\mathcal{X}^{(j+1)})\), which implies \(\mx \in \mathcal{X}^{(j+2)}\). This contradicts the assumption that \(\mx\) is at level \(j+1\), as it would instead belong to level at least  \(j+2\).
        \end{proof}

        \begin{claim}
Let \(\my\) be any language at level \(i\) for some \(i \geq 1\). Then \(\my\) can be expressed as a limit point of a sequence of languages that lie solely at level \(i-1\).
        \end{claim}
        \begin{proof} 
       Suppose \(\my\) is at level \(i\). If there exists an infinite perfect tower at level \(i\) or higher with limit \(\my\), this would contradict Claim \ref{claim:nosamelevel}. If there is no sequence in \(\mY_{i-1}\) with limit \(\my\), it implies that \(\my\) is an isolated point in \(\mX^{(i-1)}\). Then \(\my\) should belong to \(\mY_{i-1}\) rather than \(\mY_i\), which is a contradiction.
\end{proof}

The levels establish a ranking for the set of languages in \(\mX\). We will dynamically construct a directed tree structure on \(\mX\) that respect this ranking, meaning that a parent is always on a strictly higher level than its child, as described in the following property.

\begin{property}\label{property2}
All directed edges point from a language at a lower level to a language at a strictly higher level. The language of the child is always a strict subset of the parent language. 
\end{property}

Before explaining how we construct the trees, we first present some useful claims.    

\begin{claim}\label{claim:Kipt}
    If there is an infinite perfect tower with terminal language \( K \), then for any language \( \laj \) that is a strict subset of \( K \), there exists an infinite perfect tower starting at \( \laj \) and with terminal language \( K \).
\end{claim}

\begin{proof}
    Let the infinite perfect tower leading to \( K \) be \( \lanj{1}, \lanj{2}, \dots \). Choose an arbitrary string \( a \in \laj \). By the definition of an infinite perfect tower, and recalling the definition of \( B_j \)'s, there must be some \( j \) such that \( a \in B_j \), since \( \bigcup_{j=1}^\infty B_j = K \). Therefore, the sequence \( \laj, \lanj{j+1}, \lanj{j+2}, \dots \) forms an infinite perfect tower starting at \( \laj \) and with $K$ as the terminal language.
\end{proof}

At each time \( t \), we will eliminate the languages that are not consistent up to time \( t \). Thus, the set of languages remaining at time \( t \) is always a subset of \( \mX \). Note that by definition, if \( \my \) is inconsistent, meaning the adversary shows an string not in \( \my \), then all descendants of \( \my \) will also become inconsistent since the descendants are strict subsets of $\my$.

If there is no infinite perfect tower leading to the true language \( K \), then, by the same argument in Theorem \ref{thm:1}, the output density could approach \( 1/2 \). Our proof will also address this case. For convenience, however, it may be easier for the reader to assume that infinite perfect towers do lead to \( K \).

Let the Algorithm in Theorem \ref{thm:acc} be denoted as $\Acc$, and recall the definition of the strictly critical languages as in Definition \ref{dfn:sc}. By Lemma \ref{lem:Ksc} in Theorem \ref{thm:acc}, after some finite time \( T \), the true language \( K \) will always be strictly critical. We assume that we are operating after time \( T \).

At any time stamp $t$, we will use $\ga{t}$ denote the language identified by $\Acc$ at time $t$. We will keep this notation throughout this proof. 
Note that we will not use $\ga{t}$ to denote the $t$-th language in the original list of languages anymore.

\vspace{0.1in}
\noindent\textbf{Dynamic trees $\md_t$. }
Now, we explain how we construct the dynamic forest \( \md_t \) and the dynamic tree \( \md_t^* \) at each step, ensuring that they preserve Property \ref{property2}.

At each time \( t \), after the adversary generates a string, we will have a sequence of strictly critical sets, and the language \( \ga{t} \) identified by $\Acc$ will be among them. We will also eliminate the languages that are no longer consistent. We construct a forest \( \mathcal{D}_t \) on the set of remaining strings as follows:

For each language \( \lan \) (which may not necessarily be critical), we draw a directed edge from \( \lan \) to the minimum strictly critical language that strictly contains $\lan$ and has a level higher than that of $\lan$, if such a language exists. More specifically:

\begin{enumerate}
    \item\label{e1} If there is no strictly critical language that strictly contains $\lan$ and whose level is above that of $\lan$, do not add a directed edge from $\lan$ (so $\lan$ is a root).
    \item\label{e2} If there are strictly critical languages that strictly contain $\lan$ and whose level is above $\lan$, and a minimum (under inclusion) exists among them, add a directed edge from $\lan$ to this minimum.
    \item\label{e3} If no minimum (under inclusion) exists, it means that there is an infinite descending chain of strictly critical languages on the levels above $\lan$. In this case, we pick the \( t \)-th language (from largest set to smallest set under set inclusion) on this descending chain. 
\end{enumerate}

Since each vertex has exactly one directed edge pointing upward, we obtain a forest. Let \( \md_t^* \) denote the connected component of the forest containing \( \ga{t} \). In this way, we have constructed a forest where there is a directed edge from each child to its parent. (Note that we do not exclude the possibility that \( \ga{t} \) is a single vertex, i.e., it has neither a child nor a parent.)

The following claim is an important ingredient in proving validity. It heavily relies on our definition of levels, which guarantees the property in Claim \ref{claim:nosamelevel}.
\begin{claim}\label{claim:finiteTime}
There exists a finite time \( T \) which depends on the adversary's enumeration, such that 
for every language $\lan$ which is a strict subset of \( K \) at the level of \( K \) or higher,  for any  time \( t>  T \), the identified language \( \ga{t} \) is not a descendant of (including being identical to) $\lan$ in $\md_t$.
\end{claim}
\begin{proof}
Consider all subsets that are strict subsets of \( K \) and are at the level of \( K \) or higher, and whose descendants have once been identified by the algorithm. Clearly, their union is a subset of \( K \).

If their union is a strict subset of \( K \), then as soon as the adversary outputs an string in \( K \) but not in the union of these subsets, all these subsets (and their subsets) will become inconsistent and thus be purged.

If their union is \( K \), assume, for the sake of contradiction, that for an infinite amount of time $t$,  their children are identified as $\ga{t}$. However, since each set that is a strict subset of \( K \) can only be identified a finite number of times, these children which were identified by the algorithm  form an infinite perfect tower with terminal language \( K \). Consequently, these original sets, being subsets of \( K \), would also form an infinite perfect tower, which could be easily seen from the topological definition. This contradicts Claim \ref{claim:nosamelevel}.
\end{proof}

\begin{claim}\label{claim:finitetimeKancestor}
     After some finite time $T$ which depends on the adversary enumeration, any language that is identified by the algorithm $\Acc$ will have $K$ as an ancestor 
     in $\md_t^*$. 
\end{claim}
\begin{proof}
We first assume that we are after the finite time guaranteed by Claim \ref{claim:finiteTime} and also after the finite time during which \( K \) is always strictly critical. Additionally, we assume that \( T \) is larger than the position number of \( K \) in the original language listing. This ensures that for any descending chain of strictly critical languages, \( K \) is always among the first \( T \) of them. Therefore, whenever we encounter Item \ref{e3} in the tree construction above, the language we select will be a subset of \( K \).

After $T$, we can assume that the language \( \ga{t} \) identified by the algorithm $\Acc$ at time \( t \) is a subset of \( K \). By Claim \ref{claim:finiteTime}, we may further assume that \( \ga{t} \) is below the level of \( K \). 

Now, suppose \( \ga{t} \) is one level below \( K \). It must have a outdegree one, since connecting to \( K \) is one option (by Item \ref{e1}). Therefore, it must choose to connect to some other language for its parent. If its parent is above the level of \( K \), then, by the minimality of the directed edges (Items \ref{e2} and \ref{e3}), it must connect to some strict subset of \( K \), which contradicts Claim \ref{claim:finiteTime}. If its parent is a set on the same level as \( K \), then again, by minimality (Items \ref{e2} and \ref{e3}, and the fact that \( T \) is larger than the position number of \( K \) in the original language ordering), it must connect to a strict subset of \( K \), which again contradicts Claim \ref{claim:finiteTime}.

Now, suppose \( \ga{t} \) is \( i \geq 2 \) levels below \( K \). For the sake of contradiction, assume that \( K \) is not an ancestor of \( \ga{t} \). Note that \( \ga{t} \) cannot be a root, because it could have been connected to some language, with \( K \) being an option (by Item \ref{e1}). Consider the root of the tree containing \( \ga{t} \). The root must be either a superset or a subset of \( K \), since after some finite time \( T \), \( K \) itself is strictly critical, and the set of critical strings is forming a descending chain. Additionally, only strictly critical languages could have incoming edges, as per our design of the directed edges.

If the root is a subset of \( K \), it cannot be on or above the level of \( K \), by Claim \ref{claim:finiteTime}. Therefore, this root must be below the level of \( K \), and since it is a subset of \( K \), it must have an outgoing directed edge, with \( K \) being one option. This contradicts the fact that this is a root.

So, we assume the root is a superset of \( K \). We now consider the topmost descendant of this root, such that the descendant is a subset of \( K \), below the level of \( K \), but not a descendant of \( K \). Such a descendant must exist since \( \ga{t} \) itself is one. Call this descendant \( \lan' \). By our assumption and Claim \ref{claim:finiteTime}, the parent of \( \lan' \) is a superset of \( K \). However, the fact that this parent is a superset of \( K \) and is on or below the level of \( K \) would contradict the way we assign the directed edges (Items \ref{e2} and \ref{e3}).
\end{proof}

In particular, after some finite time $T$, it suffices to only look at the children of $K$ in $\md_t^*$, since each language $\ga{t}$ identified by the algorithm $\Acc$ will be part of $\md_t^*$. By Claim \ref{claim:finiteTime}, they will be descendants of $K$. 

\begin{cor}\label{cor:valid}
After some finite time \( T \) (which depends on the adversary's input), let \( \ga{t} \) and \( \ga{t+1} \) be the two languages identified by $\Acc$ at times \( t \) and \( t+1 \), respectively. Then, the true language \( K \) is  an ancestor of  \( \ga{t} \) in the tree \( \md_t^* \), and also an ancestor of  \( \ga{t+1} \) in the tree  \( \md_{t+1}^* \).
\end{cor}

The property above is crucial to prove the validity of the algorithm described below.

\vspace{0.1in}

\noindent\textbf{Fallback and Fallback string list. }
We maintain a {\it fallback string list} set $\mS_t$ at each time $t$, which contains the strings that have the priority of being output by the algorithm.
Given the fallback string list at time $t-1$, denoted by $\mS_{t-1}$, which consists of strings ordered from smallest to largest according to the original underlying string ordering (for example, if all languages are subsets of $\mathbb{N}$, then $\mS$ is ordered in increasing order within $\mathbb{N}$). By saying that the algorithm {\it falls back} to some language $\fbga{t}$ at time $t$, we mean the following:

Suppose that at time $t$, the string $w$ is the largest string (with respect to the universal ordering of strings) that has been produced by the input and the adversary up to time $t$. Let $w' \in \fbga{t}$ be the smallest string in $\fbga{t}$ that is larger than $w$.  

We add the set of unused strings in $\fbga{t}$ that are at most $\Succ_{\fbga{t}}(w')$ to $\mS_{t-1}$. Then, we order the strings in $\mS_{t-1}$ from smallest to largest and output. We then output the smallest unused string in $\mS_{t-1} \cup \ga{t}$. Finally, we remove the most recent output from $\mS_{t-1}$ and let the update set be $\mS_t$.

\vspace{0.1in}
\noindent\textbf{Algorithm Description.}

The algorithm works as follows.
\begin{enumerate}
\item If $\ga{t+1} = \ga{t}$, the algorithm continues the previous strategy and does not fall back. 
    \item If $\ga{t+1}$ is a subset of $\ga{t}$ and they are not on the same level, then we {\it fall back} to $\ga{t+1}$. 
    \item Else, we look for the minimum language $\mz$ such that $\mz$ is an ancestor of $\ga{t}$ in $\md_t^*$ and $\mz$ is also an ancestor of $\ga{t+1}$ in $\md_{t+1}^*$.  
    \begin{enumerate}
        \item If there is no $\mz$ such that $\mz$ is an ancestor of $\ga{t}$ in $\md_t^*$ and $\mz$ is also an ancestor of $\ga{t+1}$ in $\md_{t+1}^*$, then continue the previous strategy. 
        \item
If there exists a language \( \mz \) that is both an ancestor of \( \ga{t} \) in \( \md_t^* \) and an ancestor of \( \ga{t+1} \) in \( \md_{t+1}^* \), then find the minimum such \( \mz \) in terms of set inclusion. Note that the minimum must exist because the Cantor-Bendixson rank of the set \(( \mathcal{X}, \mathcal{T} )\) is at most \( r \), meaning that \( \ga{t} \) has at most \( r \) ancestors, and similarly, \( \ga{t+1} \) has at most \( r \) ancestors.

        {\it Fall back} to language $\mz$. 
    \end{enumerate}
\end{enumerate}

Again in this algorithm, once the identified language changes, the goal of the algorithm is to always fall back to to the smallest ``common ancestor" of the two languages, although the underlying trees may have changed. 

Note that Corollary \ref{cor:valid} guarantees that 3 (a) will not happen after some finite amount of time.  Corollary \ref{cor:valid2} below guarantees validity of our algorithm. 

\begin{cor}\label{cor:valid2}

After some finite time \( T \) (which depends on the adversary's input), let \( \ga{t} \) and \( \ga{t+1} \) be the two languages identified by $\Acc$ at times \( t \) and \( t+1 \), respectively. If $\ga{t} \neq \ga{t+1}$, then we will always fall back to a language that is a subset of \( K \), including $K$ itself.
\end{cor} 
\begin{proof}
Since the $\Acc$ will return to \( K \) infinitely often, Item 2 of the algorithm ensures that we will never fall back to a strict superset of \( K \) whenever \( \ga{t} \) is \( K \) after some finite amount of time. By Corollary \ref{cor:valid}, after some finite time \( T \), we will always fall back to a language whenever $\ga{t} \neq \ga{t+1}$ and all the languages that we fall back to will either be subsets of \( K \) or \( K \) itself. Therefore, validity is guaranteed by this new algorithm.
\end{proof}

 We next show that the output has high lower density in $K$. The high-level idea is as follows: at each time stamp $t$, we fall back to some language $ \fbga{t}$. 
 The very next time the adversary outputs an string that is not in $\fbga{t}$ which therefore making $\fbga{t}$ inconsistent, the fallback language at that moment, say $\fbga{t'}$, must climb up to a strictly higher level compared to $\fbga{t}$. Since the rank is finite, the fallback process can only climb up a finite number of times until reaching $K$, which limits the number of times the adversary can consistently output strings that are inconsistent relative to the current fallback language. The proof is intricate because we do not have a static underlying tree, rather, $\mathcal{D}_t^*$ is dynamic and changes over $t$. However, the core intuition outlined above remains valid and is formalized in the proof below.

\vspace{0.2in}

Let $O = \{o_1, o_2, \dots\}$ be the set of strings output by the algorithm where $o_t$ is the output at time $t$. Let $w_t$ be the string generated by the adversary at time $t$. Let $W$ be the set of strings $w_t$ such that $w_t \notin O$, and $o_t > \Succ_K(w_t)$ and $o_{t-1} > \Succ_K(w_t)$. This is considered a bad set because $w_t$ is far away from either the output $o_{t-1}$ and $o_t$. 
\begin{claim}\label{claim:climbup}
    In the end, there exists some $m$ such that after the $m$-th string in $K$, there cannot be  $r+1$ consecutive strings in $K$ that are in $W$. 
\end{claim}
\begin{proof}
Suppose we are after the finite time  $T$ guaranteed by Corollary \ref{cor:valid}. Let \( I \) be an interval in $W$, consisting of consecutive strings in $K$ that are produced after $T$. Suppose that all these strings are occupied by the adversary and are never output by the algorithm. This implies that the adversary occupies some strings within the interval, and the next available strings in the language fall outside \( I \).

Suppose the chronological sequence of strings generated by the adversary among the strings in \( I \) is \( b_1, b_2, \dots \), which may not correspond to consecutive time stamps. Let $t_i$ be the time at which $b_i$ is generated by the adversary. Thus, we have $t_1 < t_2 < \dots$.

After the adversary occupies \( b_1 \) at time \( t_1 \), the identified language is $\ga{t_1}$ by $\Acc$. We claim that $\ga{t_1}$ does not contain any of the strings in \( I \setminus \{b_1\} \). Suppose not, and let $x$ be the smallest string in $I \setminus \{b_1\}$ that is contained in $\ga{t_1}$. Since the algorithm always output the smallest unused string in $\mS_{t_1-1} \cup \ga{t_1}$, we must have $o_{t_1} \leq x$. Since $I$ is an interval in $K$, it means $x \leq \Succ_K(b_1)$. Therefore $o_{t_1} \leq \Succ_K(b_1)$, contradicts with the fact that $b_1 \in W$. 

Furthermore, $\ga{t_1} \neq \ga{t_1 -1}$. As otherwise, by a similar argument, $o_{t_1-1} \leq w_{t_1}$,  contradicts with the fact that $b_1=w_{t_1} \in W$.  Therefore at time $t_1$, the algorithm falls back to a superset language $\fbga{t_1}$. 

We now prove that \( \fbga{t_1} \) does not contain any of the strings in \( I \setminus \{b_1\} \). We prove this claim by contradiction. Let $x$ be the smallest (with respect to the string ordering in $K$) string in $I \setminus \{b_1\}$ that is contained in $\fbga{t_1}$. Since $I$ is an interval, $x \leq \Succ_K(b_1)$. Thus the string $x$ will be added to the fallback string list $\mS_{t-1}$. Therefore before the time that $x$ is used either by the adversary or the algorithm, the algorithm should have always output a string that is less than $x$ by our falling back rule and how the fallback string list works. Since $x \in I$, eventually $x$ is generated by the adversary at some time $t_i > t_1$. However this means $o_{t_i-1} < x$. This contradicts with the fact that $x = w_{t_i} \in W$.  Thus we have proved that $\fbga{t_1}$ does not contain any of the strings in \( I \setminus \{b_1\} \).

At time \( t_1 \), the language identified by the algorithm, \( \ga{t_1} \), is a descendant of \( \fbga{t_1} \) in \( \md_{t_1}^* \) (since we fall back to it). Also notice that as $t$ progresses, $\fbga{t_1}$ will remain critical until it becomes inconsistent by Claim \ref{claim:consist}. However, at time \( t_2 \), the language \( \ga{t_2} \) is not a descendant of \( \fbga{t_1} \) in \( \md_{t_2}^* \) because \( \fbga{t_1} \) does not contain \( b_2 \), but \( \ga{t_2} \) does. Thus at time $t_2$, the fallback language $\fbga{t_1}$ is not consistent anymore.

Therefore, there must exist a time \( t_1 \leq t_1' < t_2 \) such that \( \ga{t_1'} \) is a descendant of \( \fbga{t_1} \) in \( \md_{t_1'}^* \) at time \( t_1' \), but \( {\ga{t_1'+1}} \) is not a descendant of \( \fbga{t_1} \) in \( \md_{t_1'+1}^* \) at time \( t_1'+1 \).

At time $t_1'+1$, the fallback language $\fbga{t_1'+1}$ should be an ancestor of $\ga{t_1'}$ in  the tree $\mathcal{D}_{t_1'}^* $. 
Since $\ga{t_1'}$ is a descendant of $\fbga{t_1}$ in $\mathcal{D}_{t_1'}^* $, it means that $\fbga{t_1'+1}$ is either a subset and descendant of $\fbga{t_1}$ or a superset and ancestor of $\fbga{t_1}$.
Note that $\fbga{t_1'+1}$ cannot be a subset and descendant of $\fbga{t_1}$, since $\ga{t_1'+1}$ is not a descendant of $\fbga{t_1}$ in $\mathcal{D}_{t_1'+1}^*$.  Therefore $\fbga{t_1'+1}$ is a strict superset and ancestor of $\fbga{t_1}$. 

We can now continue the argument by replacing \( t_1 \) with \( t_1' \) and \( t_2 \) with \( t_3 \). We claim that \( \fbga{t_1'} \) does not contain \( I 
\setminus \{b_1, b_2\}\) by a similar argument.
If not, suppose $\fbga{t_1'}$ contains some strings in \( I 
\setminus \{b_1, b_2\}\). Let $x$ be the smallest such string. Since $b_1$ has already been taken by the adversary by now, the fallback string list should now include $x$.  Then by the definition of how the fallback works, since $x$ is occupied by the adversary first, it means up to the time when $x$ is taken by the adversary at time $t_i$, the algorithm is always outputting elements that are smaller than $x$. In particular, it means that the output by the algorithm right before the adversary generates $x$, which is $o_{t_i-1}$, is smaller than $x = w_{t_i}$. This contradicts with the fact that $x \in W$.  

So we have that \( \fbga{t_1'} \) does not contain \( b_3 \).
But at time \( t_3 \), the language identified by the algorithm $\Acc$, \( \ga{t_3} \), contains \( b_3 \). The same argument applies, leading to the conclusion that there exists \( t_1' \leq t_2' < t_3 \) such that  \( \ga{t_2'} \) is a descendant of \( \fbga{t_1'} \) in \( \md_{t_2'}^* \) at time \( t_2' \) where \( \fbga{t_2'} \) is the language fallback to at time $t_2'$, but \( {\ga{t_2'+1}} \) is not a descendant of \( \fbga{t_1'} \) in \( \md_{t_2'+1}^* \) at time \( t_2'+1 \).
By a similar argument, it implies that 
 \( \fbga{t_2'} \), has a strictly higher level than \( \fbga{t_1'} \).

This argument can be repeated, finding a \( \fbga{t_i'} \) such that $t_{i-1}' \leq t_i' < t_{i+1}$ at time $t_i'$ whose level is strictly higher than that of \( \fbga{t_{i-1}'}\). This process must stop in at most \( r - 1 \) steps, since the rank of \(\mX \) is bounded by \( r \). Thus, the adversary can output at most \( r \) consecutive strings that are not occupied by the algorithm.
\end{proof}
We now bound the lower density of $O$ in $K$, by considering the upper density of $K \setminus O$ in $K$. 

Notice that $K \setminus O$ is the disjoint union of $M$ and $(K \setminus O) \setminus M$, we will upper bound their upper densities in $K$ separately. 
For $M$, since there cannot be $r$ consecutive strings in $K$ that all belong to $M$ in the limit, we have that the upper density of $M$ in $K$ is at most $r/(r+1)$. This implies $\dlow(O \cup (K \setminus O \setminus M), K) \geq 1/(r+1)$. 

Let $Y_1 = \{ w_t \notin O: w_t \in (K \setminus O) \setminus M,  o_t \leq \Succ_K(w_t)\}$, and let $Y_2 = \{ w_t \notin O: w_t \in (K \setminus O) \setminus M,  o_{t-1} \leq \Succ_K(w_t)\}$. Thus $Y_1 \cup Y_2 = K \setminus O \setminus M$. For any $N$ sufficiently large, let $[N]_K$ to denote the first $N$ strings in $K$. Then $\dor( Y_1 \cap [N]_K, K) \leq \dor(O \cap [N]_K, K ) + o_N(1)$, and $\dor( Y_2 \cap [N]_K, K) \leq \dor(O \cap [N]_K, K ) + o_N(1)$. 
Together with the fact that $\dlow(O \cup Y_1 \cup Y_2, K) \geq 1/(r+1)$, we have $\dor(O\cap [N]_K, K) \geq 1/(3(r+1)) + o_N(1)$. This implies $\dlow(O, K) \geq 1/(3(r+1))$. 
\end{proof}
Even though this theorem addresses the special case where the Cantor-Bendixson rank of $(\mX, \mathcal{T})$ is finite and the perfect kernel is empty, it nonetheless illustrates how to handle situations where the underlying tree $\md_t^*$ evolves dynamically over time. This dynamic behavior presents one of the key technical challenges when analyzing the algorithm with fallback, in contrast to Examples \ref{example:2} and \ref{example:3}, where the trees remain static.

\subsection{Infinite rank case}
\label{subsec:element:lower:infinite}
In this section we prove the general version of the theorem, incorporating additional new ideas to ensure that the algorithm's output set $O(E,\ma)$ has large lower density even when $\mX$ has infinite rank or when the perfect kernel is non-empty under the topology we have defined.  In this way, the result applies to all instances of the language generation problem.
We state the result as follows; it is equivalent to Theorem \ref{thm:element-lower-intro} that we previewed in Section \ref{sec:overview}.

\begin{thm}\label{thm:infRankHighLD}
    There is an algorithm $\ma$ that achieves element-based generation in the limit and has the following property. Given any countable collection of languages $\coll$ with  an underlying ordering of the strings in all the languages, and for any adversarial enumeration $E$ of one of the languages $\trueL \in \coll$, the set of output strings $O(E,\ma)$ generated by the algorithm has a lower density in $K$ that is at least $1/8$. 
\end{thm}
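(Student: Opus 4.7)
The plan is to build on the algorithm from Theorem~\ref{thm:finiteRankHighLD} while replacing the use of the global Cantor--Bendixson levels by a structure that survives when the rank of $(\coll,\mathcal{T})$ is infinite or the perfect kernel is non-empty. The failure point of the finite-rank proof is precisely Claim~\ref{claim:climbup}: when the rank is $r$, each ``bad'' run of consecutive missed strings forces the fallback language to climb at least one level in a fixed hierarchy, so at most $r$ such strings appear before the process must reset. Inside the perfect kernel (or merely when the rank is an infinite ordinal), this finiteness of the climb-up is lost, and we need a different way to bound how often the adversary can drag the algorithm far away from the frontier of $\trueL$.

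First, I would keep the high-level architecture: use $\Acc$ from Theorem~\ref{thm:acc} as a backbone to produce the sequence $\ga{t}$, maintain a dynamic tree $\md_t^*$ on strictly critical languages, maintain a fallback string list $\mS_t$, and pick a fallback language $\fbga{t}$ whenever $\ga{t}\neq\ga{t-1}$. The key change is to replace the static Cantor--Bendixson level used to choose $\fbga{t}$ by a \emph{dynamic} level determined by the finite descending chain of strictly critical languages actually encountered in the algorithm's history up to time $t$; this chain is finite at every finite time, giving a well-defined finite level structure to operate with, even though as $t\to\infty$ the chain itself may grow without bound. Since $\Acc$ ensures that $\trueL$ eventually becomes strictly critical and remains so, after some finite time any fallback target selected as an ancestor of $\ga{t}$ in $\md_t^*$ lies inside $\trueL$, which preserves validity.

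Second, for the lower-density bound I would reuse the three-part accounting from Theorem~\ref{thm:finiteRankHighLD}, writing $K\setminus O$ as the disjoint union of a ``far'' set $M$ and two ``near'' sets $Y_1,Y_2$ defined by the relations $o_t\le\Succ_K(w_t)$ and $o_{t-1}\le\Succ_K(w_t)$. The delicate step is bounding the upper density of $M$ in $K$ without appealing to the rank. Rather than arguing that each long bad run raises the level in a fixed Cantor--Bendixson hierarchy, I would show that a long bad run forces the \emph{dynamic} fallback target to move, relative to the current history chain, to a language that strictly contains every previous fallback target in the chain, and then combine this with the infinitely-often accuracy of $\Acc$ (Theorem~\ref{thm:acc}) to argue that such containment-upgrades cannot occur too densely in time before $\ga{t}$ returns to $\trueL$ and the fallback list is flushed. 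This replaces the $r$-dependent factor $1/(r+1)$ by an absolute constant, which propagates through the $O,Y_1,Y_2$ split to give the claimed lower density of $1/8$ (the authors explicitly disclaim optimizing this constant, so we only need any positive absolute lower bound).

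The main obstacle, as the paper's introduction stresses, is the perfect-kernel case, where every language is a limit of infinite perfect towers inside the kernel and the recursion has no base case on which to anchor the level definition. There the dynamic-level idea must be paired with a careful ``refresh'' rule specifying when the fallback target is allowed to climb back up, so that the algorithm neither overshoots $\trueL$ infinitely often (which would break validity, as in the warning around Theorem~\ref{thm:acc-intro}) nor remains trapped in an ever-shrinking descendant of $\trueL$ (which would push the lower density to zero, as in the worst case for the KM algorithm). Making this refresh rule compatible with both the validity argument that leans on Corollary~\ref{cor:valid}-style ancestor stability and the density argument that leans on the climbing bound in Claim~\ref{claim:climbup} is the part I expect to require the most care.
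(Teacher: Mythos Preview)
Your proposal has a genuine gap at its core: the argument that ``containment-upgrades cannot occur too densely in time before $\ga{t}$ returns to $\trueL$'' does not go through, and the paper explicitly identifies this as the obstacle. Between two consecutive returns of $\Acc$ to $\trueL$, the descending chain of strictly critical languages below $\trueL$ can be arbitrarily long, and the adversary can force the fallback target to climb that chain one step at a time. Your dynamic-level idea still leaves the number of climb-up steps in a single bad run unbounded, so the Claim~\ref{claim:climbup} mechanism you hope to salvage yields at best a bound of the form $1/(3(r_t+1))$ where $r_t$ is the current chain length, not an absolute constant. The paper says this directly in the paragraph beginning ``Notice that any $\coll$ with finite rank\ldots'': if one merely replaces Cantor--Bendixson levels by a linear extension of the subset order, ``we cannot get a good estimate as in Theorem~\ref{thm:finiteRankHighLD}, where the lower density guarantee is inversely proportional to the height of the chain.''

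The missing idea is a \emph{pre-charging} mechanism: the paper's algorithm attaches a token $N_{t}=2(j-i)$ to every fallback, proportional to the number of steps the identified language \emph{dropped} in the strictly-critical chain, and deposits that many extra future strings of $\fbga{t}$ into the fallback list $\mS_t$. The point is that any long climb-up must have been preceded by an equally long drop, and that drop already banked enough output strings. The density accounting is then reorganized around this: rather than your $M,Y_1,Y_2$ split with a cap on consecutive bad strings, the paper splits the bad set into singletons $\mB_1$ and longer intervals $\mB_2$, and proves the key Lemma~\ref{claim:infrho} by building an injective map $\rho:\mB_2'\to O$ that matches each bad string $w_{t_b}$ (except one per interval) to an earlier, smaller output $o_{t'}$ drawn from the token charge at a carefully chosen earlier time $\theta(t_b)$. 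That injection, not any bound on run length, is what converts to the $1/8$ constant. Your refresh-rule concern is real but is handled automatically once tokens are in place; what you are missing is the token mechanism itself and the injective charging argument built on it.
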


To get started with the proof of Theorem \ref{thm:infRankHighLD}, we begin by observing that the topological space $(\coll,\mathcal{T})$ is also equipped with a partial order determined simply by the subset relation on the languages in $\coll$.
This makes it a {\em partially ordered space}, or {\em pospace} \cite{gierz-pospace}. The Szpilrajn extension theorem  \cite{totalorder} says that every partial order can be extended to a total (linear) order. In the case for the countable pospace, this linear order is isomorphic to a subset of $\mathbb{Q}$. In other words, there is a mapping $\ell: \mX \to \mathbb{Q}$ such that $\ell(\mx) < \ell(\my)$ if $\mx$ is a strict subset of $\my$. This mapping $\ell(\mx)$ will be the ``level" of language $\mx$. Clearly the property in Claim \ref{claim:nosamelevel} is preserved under $\ell$, which is the property that is needed for our algorithm to work.

Notice that any $\coll$ with finite rank and empty perfect kernel still has a level $\ell$ which is mapped to $\mathcal{Q}$. This way of defining $\ell$ will make this set of languages have infinite number of levels. At a hindsight, a linear ordering of this nature has a significant drawback for our purposes and thus may not sound ideal. For instance, in Figure \ref{fig:tree2},  the language sequence  such as $\lang{1}, \lang{2}, \dots$ will not share the same level under $\ell$; instead, under $\ell$ they form an increasing chain in the poset. Similarly, $\lang{2:1}^{(1)}, \lang{2:2}^{(1)}, \lang{2:3}^{(1)}, \dots$ also do not have the same level but rather form an increasing chain in the poset. The most undesirable scenario occurs when we climb up a very deep chain. For example, when transitioning from $\lang{2:i}^{(1)}$ to $\lang{2:i+1}^{(1)}$, we only fallback to $\lang{2:i+1}^{(1)}$, and so on. This type of increasing chain of languages, where each step only falls back to the larger set of the two, poses problems for us. Since the chain could be arbitrarily long in this infinite poset,
 we cannot get a good estimate as in Theorem \ref{thm:finiteRankHighLD},  where the lower density guarantee is  inversely proportional to the height of the chain. In fact, if we only use the algorithm from Theorem \ref{thm:finiteRankHighLD}, an adversary can enumerate $K = \mathbb{N}$ in such a way that the the output $O$ is missing arbitrarily long intervals in $K$, precisely due to the existence of arbitrarily long chains. This could result in small lower density by appropriately adjusting the example.

One idea to break these long chains is by collapsing languages which are similar to one single level.
For example, one could force languages with density above $1-\epsilon$
 relative to each other to be on the same level. This will significantly shorten the number of levels to $\log_{1/\epsilon} \delta$ if we stop collapsing when bottom level has density just goes below $\delta$ in \(K\). However, a challenge with this approach is that ensuring validity is difficult. Since it is likely that the true language $K$ will be on the same level as many of its relatively dense subsets, the following issue arises. Let $\ga{t}$ and $\ga{t+1}$ denote the languages identified by the algorithm $\Acc$ at times $t$ and $t+1$, respectively. If we fall back to a higher level whenever $\ga{t}$ and $\ga{t+1}$ differ but are on the same level, we cannot guarantee validity. This is because once we reach the level of $K$, we would fall back to a strict superset of $K$. However, if we do not fall back to a higher level when $\ga{t}\neq \ga{t+1}$ are on the same level, we will again face the same issue of an arbitrarily long increasing chain.

The new idea in this section leverages the flexibility in the number of strings in the language we fallback to that we can store in the fallback string list $\mS_t$. Intuitively, by the previous argument, the issue of a low-density interval of arbitrarily long length, not occupied by the algorithm's output $O$, arises when we ascend a long increasing chain. The length of this interval is directly related to the height of the chain.  

However, before ascending in the chain, there must be a point where we drop from a much higher level, as $\Acc$ guarantees that we return to $K$ infinitely often (where $K$ is at a high level). Once we descend, we know the number of levels we drop, and consequently, how much we can ascend in the future. Thus, we will charge the fallback string list $\mS_t$ 
 as we descend by increasing the size of the strings in the fallback string list  $\mS_t$  (which we will refer to as a ``token'' below for the extra number of strings to be charged in $\mS_t$).  

The high level idea is that this charging ensures that even if we encounter an arbitrarily long interval in $K$ that is disjoint from $O$, we can guarantee that before such an interval occurs, there are ``sufficiently" many outputs prior to this interval in the ordering of $K$, and thus the ordered density of $O$ is high in $K$.

\vspace{0.1in}
\vspace{0.1in}
\noindent\textbf{Dynamic trees $\md_t$. }
Recall the function $\ell$ defines a total ordering of the set of languages, i.e., a language with smaller level value is always below the language with a larger level value. 
We will again construct a dynamic forest \( \md_t \) and the  dynamic tree \( \md_t^* \) at each step $t$. 

At each time \( t \), after the adversary generates an string $w_t$, we will have a sequence of strictly critical sets, and the language \( \ga{t} \) identified by $\Acc$ will be among them. We will also eliminate the languages that are no longer consistent. We construct a forest \( \mathcal{D}_t \) on the set of remaining strings as follows, similar to before:

For each language $\lan$ (which may not necessarily be critical), we draw a directed edge from $\lan$ to the minimum strictly critical language $\tilde{\lan}$ that strictly contains $\lan$ with $\ell(\tilde \lan) > \ell(\lan)$, if such a language exists. More specifically:

\begin{enumerate}
    \item\label{e1inf} If there is no strictly critical language $\lan'$ that strictly contains $\lan$ and $\ell(\lan') > \ell(\lan)$, do not add a directed edge from $\lan$ (so $\lan$ is a root).
    \item\label{e2inf} If there are strictly critical languages $\lan'$ that strictly contain $\lan$ and  $\ell(\lan') > \ell(\lan)$, and a minimum (under set inclusion) exists among them, add a directed edge from $\lan$ to this minimum.
    \item\label{e3inf} If no minimum (under set inclusion) exists, it means that there is an infinite descending chain (under set inclusion) of strictly critical languages  with level higher than that of $\lan$. In this case, we pick the \( t \)-th language (from largest set to smallest set under set inclusion) and add a directed edge from $\lan$ to this language.
\end{enumerate}
The above defines a directed graph $\md_t$.
Since each vertex has outdegree at most one, and if it is exactly one then the directed edge is pointing upward, $\md_t$ is a forest. Let \( \md_t^* \) denote the connected component of the forest containing \( \ga{t} \). In this way, we have constructed a forest where there is a directed edge pointing from each child to its parent.

\vspace{0.1in}
\noindent\textbf{Fallback, fallback string list, and token.}
 Again let $\ga{t}$ be the language identified by the algorithm $\Acc$ at time $t$. 
The algorithm works similarly to before, except that we modify the fallback rule. 

When we fall back to $\fbga{t}$ at time~$t$, we assign a \emph{token} indicating the extent to which unused strings in $\fbga{t}$ will be added to the fallback string list~$\mS_{t-1}$. Recall that, similar to the finite-rank case, we maintain a \emph{fallback string list} $\mS_t$ at each time~$t$, which contains the strings that have the priority to be output by the algorithm.

By saying that the algorithm falls back to some language $\fbga{t}$ at time $t$ with {\it token} $N_t$, we mean the following:

Suppose that at time $t$, the string $w$ is the largest string (with respect to the universal ordering of strings) that has been produced by the input and the adversary up to time $t$. Let $w' \in \fbga{t}$ be the smallest string in $\fbga{t}$ that is larger than $w$.  
We add to the set $\mS_{t-1}$ all unused strings in $\fbga{t}$ that are at most $w'$. Furthermore, we add the next $N_t$ smallest  strings in $\fbga{t}$ that are larger than $w'$ to $\mS_{t-1}$. (Note these $N_t$ strings have not been used by either the algorithm nor the adversary yet).  
Next, we order the strings in the revised set $\mS_{t-1}$ from smallest to largest. The output is the first unused string in $\mS_{t-1}$. Finally, we update $\mS_t$ by removing the most recently output string from $\mS_{t-1}$.

\vspace{0.1in}
\noindent\textbf{Algorithm.}
\begin{enumerate}
\item Initialize the fallback list set $\mS_0 = \{\emptyset \}$. Set token $N_0 = 0$.
\item If $\ga{t+1} = \ga{t}$, do not fall back to any language. If $\mS_t \neq \emptyset$ and the smallest unused string in $\mS_t$ is smaller than the next available string in $\ga{t}$, output the smallest unused string in $\mS_t$, and update $\mS_t$ to be $\mS_{t+1}$ by removing this string and all the strings already used by the adversary. Otherwise, output the smallest unused string in $\ga{t}$ and set $\mS_{t+1}$ to be $ \mS_t$ after removal the strings already generated by the adversary. 
    \item If $\ga{t+1}$ is a strict subset of $\ga{t}$. We fall back to $\ga{t}$. Since $\ga{t+1}$ and $\ga{t}$ are both strictly critical at the moment, they both belong to the descending chain of strictly critical languages at time $t$, which is $\lanj{1} \supsetneq \lanj{2} \supsetneq \lanj{3} \supsetneq \dots$. Suppose $\ga{t}$ is the set $\lanj{i}$ and $\ga{t+1}$ is set $\lanj{j}$. Set the token $N_{t+1} = 2(j-i)$.

\item If $\ga{t+1}$ is a strict superset of $\ga{t}$. Then we fall back to $\ga{t+1}$. Set the token $N_{t+1} = 2$. 

\item \label{enum:alginf5} Else, we look for the minimum language $\mz$ such that $\mz$ is an ancestor of $\ga{t}$ in $\md_t^*$ and $\mz$ is also an ancestor of $\ga{t+1}$ in $\md_{t+1}^*$.  

    \begin{enumerate}
        \item If there is no $\mz$ such that $\mz$ is an ancestor of $\ga{t}$ in $\md_t^*$ and $\mz$ is also an ancestor of $\ga{t+1}$ in $\md_{t+1}^*$, then do not fall back. If $\mS_t \neq \emptyset$ and the smallest unused string in $\mS_t$ is smaller than the next available string in $\ga{t}$, output the smallest unused string in $\mS_t$, and update $\mS_t$ after removing this  string and the strings used up by the adversary, denote the resulting set as $\mS_{t+1}$. Otherwise, output the smallest unused string in $\ga{t}$ and set $\mS_{t+1} = \mS_t$ after removing strings that have been used by the adversary. 
        \item
If there exists a language \( \mz \) that is both an ancestor of \( \ga{t} \) in \( \md_t^* \) and an ancestor of \( \ga{t+1} \) in \( \md_{t+1}^* \), then find the minimum such \( \mz \) in terms of set inclusion. 

\begin{enumerate}
    \item 
If this minimum exists, then call this $\mz = \fbga{t+1}$ to be the language we fall back to. 
\item If the minimum does not exist. Since the strictly critical languages  at time $t+1$ form a descending chain under set inclusion, we pick the $(t+1)$-th largest one in the descending chain (under set inclusion) of strictly critical set. Set this language to be $\fbga{t+1}$.
\end{enumerate}
In both cases above, notice that the fallback language $\fbga{t+1}$ is also strictly critical at time $t+1$. Let the descending chain of strictly critical languages at time $t+1$ be $\lanj{1} \supsetneq \lanj{2} \supsetneq \dots$. 
Suppose $\fbga{t+1} = \lanj{i}$ and $\ga{t+1} = \lanj{j}$. 
Set the token $N_{t+1} = 2(j-i)$. 
    \end{enumerate}
\end{enumerate}

Note that our linear ordering guarantee that there is no strict subset of $K$ with level higher than $K$ itself with respect to $\ell$. 
\begin{claim}\label{claim:finitetimesubsetK}
     After some finite time $T$, which could depend on the adversary's enumeration of strings in $K$, any language identified by $\Acc$ will have $K$ as an ancestor in $\md_t^*$.
\end{claim}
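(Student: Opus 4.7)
The plan is to adapt the argument of Claim \ref{claim:finitetimeKancestor} from the finite-rank setting, where the key leverage came from the stratification into levels $\mathcal{Y}_i$. Here the analogous role is played by the total order $\ell$: since $\ell$ refines the partial order by strict inclusion, any strict subset of $K$ has $\ell$-level strictly below $\ell(K)$, and any strict superset has $\ell$-level strictly above. First I would choose $T$ so that, for all $t \geq T$, three things hold simultaneously: (i) $K$ is strictly critical (Lemma \ref{lem:Ksc}); (ii) $\ga{t} \subseteq K$, which follows from the validity of $\Acc$ (Proposition \ref{stmt:index-based-gen-limit}); and (iii) $t \geq z$, where $z$ is the index of $K$ in $\coll$. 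Fix such $t$; if $\ga{t}=K$ the claim is immediate, so assume $\ga{t} \subsetneq K$.

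Next I would trace the ancestor chain of $\ga{t}$ in $\md_t^*$ by repeatedly following the upward directed edge: $\ga{t} \subsetneq v_1 \subsetneq v_2 \subsetneq \cdots$. By construction each $v_i$ is strictly critical at time $t$, so corresponds to some position in the descending chain $\lang{c_t(1)} \supsetneq \lang{c_t(2)} \supsetneq \cdots$; since the $v_i$ strictly ascend under inclusion, the positions $k_i$ (with $v_i = \lang{c_t(k_i)}$) strictly decrease, and the chain must terminate at some root $v_m$. The root $v_m$ cannot be a strict subset of $K$, since then $K$ would itself be a strictly critical strict superset of $v_m$ with $\ell(K) > \ell(v_m)$, contradicting that $v_m$ has no parent; hence $v_m \supseteq K$.

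The key step will then be the induction showing that whenever $v_i \subsetneq K$, we also have $v_{i+1} \subseteq K$. If the parent is assigned by rule \ref{e2inf} of the tree construction, then $v_{i+1}$ is the minimum strictly critical strict superset of $v_i$ with higher $\ell$-level; since $K$ is such a candidate, $v_{i+1} \subseteq K$. If instead rule \ref{e3inf} applies, then every strictly critical language at time $t$ is a strict superset of $v_i$, so the descending chain in question is the full $\lang{c_t(1)} \supsetneq \lang{c_t(2)} \supsetneq \cdots$, and $v_{i+1} = \lang{c_t(t)}$. Writing $K = \lang{c_t(p_t)}$ and noting that only languages with index less than $z$ in $\coll$ can appear before $K$ in the strictly critical chain, we have $p_t \leq z \leq t$, whence $\lang{c_t(t)} \subseteq \lang{c_t(p_t)} = K$. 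Combining the two observations — the chain terminates at a superset of $K$, and it can only leave the region $\{v : v \subsetneq K\}$ by passing through $K$ — forces some $v_j = K$, finishing the proof.

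The main obstacle is handling rule \ref{e3inf} when there is no minimum strictly critical strict superset of $v_i$, which is exactly the situation that did not arise cleanly in the finite-rank proof. The saving observation is that this rule can only fire when $v_i$ is a strict subset of \emph{every} strictly critical language at time $t$, so the ``$t$-th from the top'' choice is an unambiguous element of the descending chain, and the bound $p_t \leq z \leq t$ guarantees we land at or below $K$. This is precisely where the replacement of the stratified levels $\mathcal{Y}_i$ by the linear order $\ell$ from the Szpilrajn extension does real work.
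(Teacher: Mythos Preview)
Your proof is correct and follows essentially the same approach as the paper's: choose $T$ so that $K$ is strictly critical and $t \geq z$, then trace the ancestor chain of $\ga{t}$ (consisting entirely of strictly critical languages) and use the minimality in rule \ref{e2inf} together with the fact that $K$ is always a candidate superset to show the chain stays within $\{v : v \subseteq K\}$ and must eventually hit $K$. The paper organizes the same idea slightly differently: it shows directly that the strictly critical subsets of $K$, say $K = \lanj{0} \supsetneq \lanj{1} \supsetneq \lanj{2} \supsetneq \cdots$, satisfy that each $\lanj{i+1}$ points to $\lanj{i}$ (via rule \ref{e2inf}, since the minimum exists and equals $\lanj{i}$), making $K$ an ancestor of every one of them, including $\ga{t}$.

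One remark: your analysis of rule \ref{e3inf} is in fact vacuous in your setup. Each $v_i$ in your ancestor chain is strictly critical (as you note), so the strictly critical strict supersets of $v_i$ are exactly the languages preceding $v_i$ in the descending chain of strictly critical languages --- a finite set with an obvious minimum. Hence rule \ref{e3inf} never fires on any $v_i$, and the case analysis you carry out, while not incorrect, never actually applies. Your framing of this as ``the main obstacle'' is therefore a mild misdiagnosis: the real work done by the linear order $\ell$ here is simply to guarantee $\ell(\lan) < \ell(K)$ for every strict subset $\lan$ of $K$, which is exactly what makes $K$ a valid candidate in rule \ref{e2inf} at every step of the ascent.
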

\begin{proof}
We first assume that we are past the finite time after which $K$ is always strictly critical. Additionally, we assume that $T$ is greater than the position number of $K$ in the original language listing. This ensures that for any descending chain of strictly critical languages, $K$ is always among the first $T$ of them. Consequently, whenever we encounter Item \ref{e3inf}, the fallback language will be a subset of $K$.

After $T$, we can assume that the language \( \ga{t} \) identified by the algorithm $\Acc$ at time \( t \) is a subset of \( K \). By the definition of total linear order, \( \ell(\ga{t}) < \ell(K).\) 

We show that any strict subset $\lan$ of $K$, if consistent at $t$, should has a direct edge towards a strictly critical language that is a subset of $K$. By the definition of the linear ordering $\ell$ and our construction, we know $K$ has higher level than any of its strict subset. 
  So $\lan$ in $\md_t$ must has outdegree at least one, since connecting to \( K \) is one option (by Item \ref{e1inf}). By the minimality of the directed edges (Items \ref{e2inf} and \ref{e3inf}), and our choice of $T$, it must has a directed edge towards some strict subset of \( K \). 

  We are left to show that for any strictly critical language that is a subset of $K$, $K$ is its ancestor in $\mathcal{D}_t^*$. Suppose the sequence of strictly critical sets following $K$ in the ordering is $\lanj{1} \supsetneq \lanj{2} \supsetneq J_3 \dots$. We will show that for all $i \geq 1$, $\lanj{i+1}$ has a directed edge pointing to $\lanj{i}$. Since each directed edge must point to a strictly critical set at time $t$, and by the minimality condition (Item \ref{e2inf}), the minimum exists and it is $\lanj{i}$. Therefore, there is a directed edge from $\lanj{i+1}$ to $\lanj{i}$. Combining this with the previous result, we reach the conclusion of the claim.
\end{proof}

This claim implies the validity of the algorithm, as stated in the following corollary.
\begin{cor}\label{cor:validityinfinite}
There exists a finite time $T^*$, which may depend on the adversary's enumeration of strings in $K$, such that after $T^*$, the algorithm always outputs an string in $K$.
\end{cor}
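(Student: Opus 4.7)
The plan is to derive Corollary~\ref{cor:validityinfinite} almost directly from Claim~\ref{claim:finitetimesubsetK}, combined with a careful audit of the algorithm's fallback rule. The conceptual heart of the argument is that every fallback language $\fbga{t}$ chosen after some finite time is a subset of $K$, so every string that subsequently enters $\mS_t$ lies in $K$; and the residual ``legacy'' strings sitting in $\mS_t$ from earlier times form a finite set and therefore contribute only finitely many non-$K$ outputs.

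First I would pick $T_0$ large enough so that (a) $K$ is strictly critical at every step $t \geq T_0$ (Lemma~\ref{lem:Ksc}); (b) the index $z$ of $K$ in $\coll$ is less than $T_0$; and (c) Claim~\ref{claim:finitetimesubsetK} applies, so that $K$ is an ancestor of $\ga{t}$ in $\md_t^*$ for all $t \geq T_0$. In particular $\ga{t} \subseteq K$ for all $t \geq T_0$.

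Next I would go case by case through the fallback rule to check that $\fbga{t+1} \subseteq K$ whenever the algorithm does fall back at a step $t+1 \geq T_0$. In case~3 the fallback is $\ga{t} \subseteq K$; in case~4 it is $\ga{t+1} \subseteq K$. In case~5(b)(i) the fallback is the inclusion-minimum common ancestor $\mz$ of $\ga{t}$ in $\md_t^*$ and $\ga{t+1}$ in $\md_{t+1}^*$; since $K$ is itself a common ancestor of both (by~(c)), minimality forces $\mz \subseteq K$. In case~5(b)(ii) the fallback is the $(t+1)$-st element of the descending chain $\lanj{1} \supsetneq \lanj{2} \supsetneq \cdots$ of strictly critical languages at time $t+1$; by~(a)--(b), $K$ occupies some position at most $z < T_0 \leq t+1$ in this chain, so $\fbga{t+1}$ is a proper subset of $K$.

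Consequently, at every time step $t \geq T_0$, the only strings added to $\mS_t$ are strings of $K$, and whenever the algorithm outputs directly from $\ga{t}$ the output is in $K$ as well. The only way the algorithm can output a string outside $K$ after $T_0$ is to output some string that was added to $\mS_t$ during the first $T_0-1$ steps. But only finitely many such strings exist, and each is output at most once (after which it is removed from the list), so after some finite $T^* \geq T_0$ no such legacy strings remain available and every output belongs to $K$. The main obstacle, and the part that will require the most care, is the case~5(b)(ii) check --- the regime where no inclusion-minimum common ancestor exists --- together with the bookkeeping needed to show that the pre-$T_0$ contents of $\mS_t$ are really flushed out in finite time rather than persisting indefinitely as small unused strings that keep blocking newer ones.
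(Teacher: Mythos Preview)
Your approach is correct and mirrors the paper's proof: both rely on Claim~\ref{claim:finitetimesubsetK} to conclude that every fallback language chosen after some fixed time is a subset of $K$, and hence (together with $\ga{t}\subseteq K$) every string newly placed into the fallback list thereafter lies in $K$. Your case analysis of items~3, 4, and~5(b) is exactly what the paper's one-line appeal to ``Item~\ref{enum:alginf5} and Claim~\ref{claim:finitetimesubsetK}'' is abbreviating; in particular your handling of~5(b)(ii) via the position of $K$ in the descending chain is the right argument.

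One small point: your stated ``main obstacle'' about flushing out the legacy contents of $\mS_t$ is not actually an obstacle for this corollary. For validity you only need that the number of post-$T_0$ outputs lying outside $K$ is finite, and that follows immediately from the fact that $\mS_{T_0}$ is a finite set and every bad output after $T_0$ must come from it --- whether or not any remaining legacy strings are ever output is irrelevant to the statement. The paper in fact defers the genuine flushing argument to the opening of the proof of Lemma~\ref{claim:infrho}, where it is needed for the density analysis rather than for validity.
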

\begin{proof}
We assume that we are past the finite time guaranteed to exist by the conclusion of Claim \ref{claim:finitetimesubsetK}. Then, we only need to verify that we always fall back to a language that is subset of $K$. This is guaranteed by Item \ref{enum:alginf5} in the algorithm description and by Claim \ref{claim:finitetimesubsetK}.
\end{proof}

Again, at time $t$, let $w_t$ be the string generated by the adversary. 
Let $o_t$ be the string output by the algorithm at time $t$. Define $O = \bigcup_{t=1}^\infty \{o_t\}$ as the set of strings output by the algorithm in the end. Clearly, $O \subsetneq K$ since the algorithm cannot repeat a string already generated by the adversary.

Again without loss of generality, we could assume the ground set is the set of natural numbers $\mathbb{N}$. Then $K$ is a subset of $\mathbb{N}$. $K$ consists of the set of ordered integers $\{\psi(1), \psi(2), \dots\}$ where the $i$-th string in $K$ is $\psi(i)$. For each string $x \in K$, recall $\Succ_K(x)$ is defined to be the next string in $K$ following immediately after $x$. i.e., $\Succ_K(x) = \psi( \psi^{-1}(x)+1)$.

Theorem \ref{thm:infRankHighLD} is equivalent to the following proposition. 
\begin{prop}\label{lem:infLD}
The lower density of $O$ in $K$ is at least $1/8$. 
\end{prop}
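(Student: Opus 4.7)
The plan is to show that $\liminf_{N \to \infty} |O \cap [N]_K|/N \geq 1/8$, where $[N]_K$ denotes the first $N$ strings of $K$ in its underlying ordering. By Corollary \ref{cor:validityinfinite} we may restrict to times $t \geq T^*$, after which every algorithm output lies in $K$ and $K$ is always an ancestor of $\ga{t}$ in $\md_t^*$. Following the template of the proof of Theorem \ref{thm:finiteRankHighLD}, I will decompose $K \setminus O = M \sqcup Y_1 \sqcup Y_2$, where $M$ is the set of adversary-produced $w_t \in K$ for which both $o_{t-1}, o_t > \Succ_K(w_t)$ and $Y_1, Y_2$ are the boundary sets defined analogously to that proof. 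As in the finite-rank case, one has $\dor(Y_j \cap [N]_K, K) \leq \dor(O \cap [N]_K, K) + o_N(1)$, so the main task reduces to upper bounding $\dup(M, K)$ by a constant strictly less than $1$.

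The key new ingredient is the token mechanism. Whenever the algorithm falls back to a language $\fbga{t+1}$ sitting $k$ positions above $\ga{t+1}$ in the current descending chain of strictly critical languages, $2k$ strings of $\fbga{t+1}$ are deposited into $\mS_{t+1}$ and will be output in subsequent steps in smallest-first order; since $\fbga{t+1} \subseteq K$ for $t \geq T^*$, each such output contributes to $O \cap K$ near the current frontier of $K$. The core amortized claim, the analogue of Claim \ref{claim:climbup}, asserts that for any maximal interval $I$ of consecutive strings in $K$ that lies inside $M$, the number of algorithm outputs in $O \cap K$ occurring within or just before $I$ is at least $|I|/C$ for an absolute constant $C$. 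The proof replays Claim \ref{claim:climbup}: between consecutive adversary-produced strings inside $I$, the current fallback target is forced to become inconsistent and is replaced by a strict superset sitting strictly higher in the chain. In the finite-rank setting this immediately bounded the total ascent by the Cantor--Bendixson rank; here the ascent can be unbounded, but the rule $N_{t+1} = 2(j-i)$ guarantees that any cumulative ascent of $k$ chain positions has been matched by a prior descent which deposited $2k$ strings into $\mS$, and those strings appear as outputs of the algorithm in $K$ close to $I$. Amortizing this charging over all intervals $I$ will yield a constant upper bound on $\dup(M, K)$, and combining with the $Y_j$ bounds then gives $\dlow(O, K) \geq 1/8$.

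The main technical obstacle is that the chain of strictly critical languages is itself dynamic: strictly critical languages get killed as the adversary enumerates new strings, so a descent of $k$ positions recorded at some time $t_0$ is measured against a different chain than an ascent of $k$ positions at a later time $t_1 > t_0$. The crucial bookkeeping is to show that tokens deposited into $\mS$ during an earlier descent remain \emph{effective} under this reindexing, in the sense that their associated fallback strings are actually output as elements of $K$ before the frontier of $K$ advances past $I$. Tracking, for each fallback token, the sequence of critical-chain updates that it survives before being cashed in, is where the bulk of the remaining technical work lies, and is the step I expect to be the main obstacle.
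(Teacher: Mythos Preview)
Your plan is essentially the paper's strategy: reduce to a charging lemma that maps each ``bad'' adversary string in $M$ (your $M$ is the paper's $\mB_b$) to an earlier algorithm output, using the token mechanism to pay for climbs up the critical chain. The paper's version of your amortized claim is Lemma~\ref{claim:infrho}, which builds an injection $\rho:\mB_2'\to O$ with $\rho(w_t)<w_t$; your identification of the main obstacle (the chain is dynamic, so descent-tokens are measured against a different chain than the later ascent they must pay for) is exactly where the paper spends its effort, via the auxiliary quantities $\theta(t_i)$, $F(t_i)$, $F'(t_i)$ and the injectivity Claim~\ref{claim:smallPhi}.

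Two places where your outline diverges from the paper and would need adjustment. First, your per-interval claim ``outputs near $I$ are at least $|I|/C$'' cannot hold uniformly for singleton intervals: the charging argument genuinely needs two adversary hits $b_1,b_2$ inside $I$ to force a climb (Claim~\ref{claim:theta} uses $i\geq 2$), so the first element of each interval is uncharged. The paper handles this by splitting $M=\mB_1\sqcup\mB_2$ into singletons and intervals of length~$\geq 2$, applying the injection only to $\mB_2'$ (i.e.\ $\mB_2$ minus one element per interval), and bounding $\mB_1$ separately via the elementary observation that singletons in $M$ cannot be adjacent to each other or to $\mB_2$. Second, rather than routing through an upper bound on $\dup(M,K)$, the paper runs a direct min--max: with $\alpha=|\mB_2\cap[N]_K|/N$, the $Y_j$-type bound gives $|O\cap[N]_K|\geq(1-\alpha)N/6$ while the injection gives $|O\cap[N]_K|\geq \alpha N/2$, and balancing at $\alpha=1/4$ yields $1/8$. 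Your route would also work once the singleton issue is fixed, but the paper's packaging is what delivers the specific constant.
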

The rest of the section is dedicated to prove Proposition \ref{lem:infLD}.
\begin{proof}[Proof of Proposition \ref{lem:infLD}]
Let $\mB_g$ be the set of ``good" strings in $K \setminus O$, defined as \[\mB_g = \{ w_t \in K \setminus O: o_t \leq \Succ_K(w_t) \text{ or } o_{t-1} \leq \Succ_K(w_t) \}.\]
This set consists of adversary inputs for which the algorithm immediately outputs an string that is not far behind or the previous output is not far behind. 

Let $\mB_b$ be the set of ``bad" strings, defined as \[\mB_b = K \setminus O \setminus \mB_g.\] Clearly, $\mB_b$ consists of a disjoint union of intervals (i.e., consecutive strings in $K$) based on the string ordering in $K$. Let $\mB_2$ be the collection of these intervals in $\mB_b$ with a length of at least two. Let $\mB_1$ be the collection of singletons in $\mB_b$ relative to $K$. For example, if $K = \{1,2,\dots, 11\}$, and $\mB_b= \{2,3,4,5\} \cup \{7\} \cup \{9,10\}$, which forms a disjoint union of three intervals. Then $\mB_2 = \{2, 3, 4, 5\} \cup \{ 9, 10\}$, comprising the union of intervals with lengths of at least two, and $\mB_1 = \{7\}$. In another example where $K = \{1,3,5\dots, 11\}$ and $\mB_b= \{3,5\}  \cup \{9\}$, which forms a disjoint union of two intervals, then $\mB_2 = \{3,5\}$, comprising of one interval with lengths of at least two, and $\mB_1 = \{7\}$. 

The next lemma is the main lemma in the proof. 

\begin{lem}\label{claim:infrho}
Given any enumeration of $K$ by the adversary, there exists a way to remove at most one string from each maximal interval in $\mB_2$, resulting in $\mB_2'$, such that the following property holds for $\mB_2'$:
There exists a finite time $T$ and a map $\rho: \mB_2' \to O$ such that for all $t > T$, and for any $t$ where $w_t \in \mB_2'$, we have $\rho(w_t) = o_{t'}$ for some $t' < t$, with $o_{t'} < w_t$ in the string ordering of $K$. In addition, $\rho$ is injective. 
\end{lem}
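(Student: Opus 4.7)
\textbf{Proof plan for Lemma \ref{claim:infrho}.} The plan is to run a charging argument whose currency is the tokens $N_t$ assigned at each fallback step. For each maximal bad interval $I = \{b_1 < b_2 < \cdots < b_L\} \subseteq \mB_2$, the target is to charge $b_2,\ldots,b_L$ each to a distinct earlier output strictly smaller in the $K$-ordering, while the removed string will be $b_1$ (the first string of the interval, which is the one for which no ``paid-for'' token-output is immediately available).

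First I would replay the ``climbing'' argument from Claim~\ref{claim:climbup} in the present, infinite-rank setting. Let $t_i$ be the time at which the adversary enumerates $b_i$, and let $\fbga{t_i}$ be the most recent fallback language at that moment. By the same logic as in Claim~\ref{claim:climbup}, no $\fbga{t_i}$ contains $b_{i+1}, \ldots, b_L$ (otherwise the priority rule of $\mS_t$ would force the algorithm to output one of them early, contradicting that $b_{i+1},\ldots,b_L$ are bad). Consequently, each transition $t_i \mapsto t_{i+1}$ forces a strict ascent: the next fallback language $\fbga{t_{i+1}}$ is a proper superset of $\fbga{t_i}$ along the chain of strictly critical languages, and in particular its index in that descending chain is strictly smaller. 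This gives $L-1$ ascent events associated with the interval $I$.

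Next I would show that each ascent has already been ``paid for'' by a prior descent. Whenever the algorithm descends from index $i$ to index $j>i$ in the critical chain (Items~3 and 5 of the algorithm), it is awarded a token of value $2(j-i)$, which deposits that many hitherto unused strings of the fallback language $\fbga{t}$ into $\mS_t$; these strings are all smaller than, or only slightly beyond, the largest element produced so far, so in the $K$-ordering they sit \emph{before} any string the adversary will reach in the future bad interval $I$. By the priority rule of $\mS_t$, each deposited string is eventually output as some $o_{t'}$ with $t'$ later than the descent but earlier than the bad interval, and with $o_{t'}$ preceding every $b_i$ in the $K$-ordering. Tracking the signed net movement of the fallback index along the critical chain up to the bad interval, the total amount of descent is at least the total amount of subsequent ascent through the same chain positions, so there are at least $L-1$ still-unused token-deposited outputs available to match $b_2,\ldots,b_L$; define $\rho(b_i)$ to be the earliest such unmatched token-output that lies below $b_i$ in $K$.

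The main obstacle, and where the factor $2$ in the token rule $N_t = 2(j-i)$ is essential, is ensuring \emph{global} injectivity of $\rho$ across the (possibly infinitely many) bad intervals. A single descent event may be followed by several later ascents along different critical chains, and a single token-output could in principle be claimed by charges from multiple bad intervals. To prevent this, I would maintain an amortized ledger over time: each unit of descent contributes $2$ tokens to the ledger, each ascent that creates a bad-string charge debits $1$ token, and the removal of $b_1$ from each interval absorbs the accounting boundary effect when an interval begins after a reshuffling of the critical chain (or after the ``$t$-th language'' edge case of Item~\ref{e3inf} and Item 5(b)(ii)). The factor-of-$2$ slack covers (i) the one-per-interval removal, (ii) token-strings that the adversary itself claims before they can be output (these are removed from $\mS_t$), and (iii) the need for distinct assignees across overlapping ascents from different critical chains. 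Finally, Corollary~\ref{cor:validityinfinite} and Claim~\ref{claim:finitetimesubsetK} guarantee that past a finite time $T$ all this accounting takes place inside $K$, so the map $\rho$ is well-defined on $\mB_2'$ for $t > T$, completing the plan.
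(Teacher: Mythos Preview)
Your high-level intuition matches the paper's: tokens deposited during descents in the strictly-critical chain are what ultimately pay for the bad strings in $\mB_2'$, and the factor $2$ in the token count is there precisely so that even when the adversary consumes one of the deposited strings, another is still available for $\rho$. But the plan has a genuine gap at its center.

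The difficulty you do not confront is that the descending chain of strictly critical languages is \emph{dynamic}: it changes at every step, so phrases like ``signed net movement of the fallback index along the critical chain'' and ``ascents through the same chain positions'' are not well-defined. A descent at time $s$ is a drop in the chain \emph{at time $s$}, and an ascent at time $s'>s$ is a rise in a \emph{different} chain. There is no single ledger indexed by chain position that survives across time, so the amortization you sketch cannot be carried out as stated. The paper's solution is to name, for each bad $b_i$ (with adversary time $t_i$), a specific past time $\theta(t_i)$ --- the most recent $t<t_i$ with $\fbga{t}\supseteq \ga{t_i}$ but $\ga{t}\not\supseteq \ga{t_i}$ --- and a specific language $F(t_i)$ that is strictly critical at \emph{both} $\theta(t_i)$ and $t_i$ and sandwiched between $\ga{\theta(t_i)}$ and $\fbga{\theta(t_i)}$. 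The two tokens deposited for the single step $F(t_i)\to F'(t_i)$ at time $\theta(t_i)$ are then assigned to $b_i$. Injectivity is proved by showing the induced map $b_i\mapsto(\theta(t_i),F(t_i))$ is injective (the paper's Claim~\ref{claim:smallPhi}), which is a careful case analysis exploiting that $F(t_i)$ is critical at both times; this is exactly the step your ledger argument hand-waves over.

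Two smaller points. First, the removed string is $b_1$ in \emph{chronological} order (the first element of $I$ the adversary enumerates), not the smallest in the $K$-ordering as your notation $b_1<b_2<\cdots$ suggests; the existence proof for $\theta(t_i)$ genuinely uses $i\ge 2$ chronologically. Second, your assertion that all token-deposited strings sit before the future bad interval in the $K$-ordering is not automatic: the deposit extends $N_t$ strings \emph{beyond} the current maximum. The paper handles the case $x_1,x_2\ge w_{t_b}$ separately (its Case~2), showing it forces $w_{t_b}\in\mB_g$, a contradiction; your plan does not account for this.
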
  

We first show that Lemma \ref{claim:infrho}  implies Proposition \ref{lem:infLD}. 

Let $N$ be a positive integer. We will use $[N]_K$ to denote the set of the first $N$ strings of $K$, i.e., $[N]_K = \{\psi(1), \psi(2), \dots, \psi(N)\}$. Suppose the density of $\mB_2$ in $[N]_K$ is $\alpha$ (note that $\alpha$ can depend on $N$). We will use  two methods to derive two lower bounds on the density of $O$ in $[N]_K$, expressed in terms of $\alpha$. Together, these two bounds will imply the conclusion of Proposition \ref{lem:infLD}.

Observe that any string in the set difference $K \setminus \mathcal{B}_2$ must belong to one of three mutually exclusive sets: $O$, $\mathcal{B}_g$, or $\mathcal{B}_1$. Consequently, the density of $O \cup \mathcal{B}_g \cup \mathcal{B}_1$ in $[N]_K$ is exactly $1 - \alpha$.

The set $O \cup \mathcal{B}_g \cup \mathcal{B}_1$ forms disjoint intervals in $[N]_K$. The complement of this set in $[N]_K$ consists of intervals formed by strings in $\mathcal{B}_2$. By the definition of $\mathcal{B}_1$, the following hold. 
No string in $\mathcal{B}_1$ can be at an endpoint of these intervals, except possibly the last one, since no string in $\mathcal{B}_1$ can be adjacent to a string in $\mathcal{B}_2$. Furthermore no two strings in $\mathcal{B}_1$ can be adjacent in $[N]_K$. 
Therefore, the density of $\mathcal{B}_1$ in $[N]_K$ is at most $({1 - \alpha})/2 + o_N(1)$. It follows that the density of strings in $O \cup \mathcal{B}_g$ in $[N]_K$ is at least 
$(1 - \alpha)/{2} + o_N(1)$.

We first use one method to lower bound the density of $O$ in $[N]_K$. By the definition of $\mB_g$, there exists a mapping $g: \mB_g \to O$ such that $g(b) \leq \Succ_K(b)$, and $|g^{-1}(o)|\leq 2$. Therefore
\[
|O \cap [N+1]_K| \geq |\{ g(b): b \in \mB_g \cap [N]_K\}| \geq |\mB_g \cap [N]_K|/2. 
\]
Therefore $2|O \cap [N]_K| + 2  \geq |\mB_g \cap [N]_K|$. Note that the set $O$ is disjoint from the set $\mB_g$.  Together with the fact that $|O \cap [N]_K| + |\mB_g \cap [N]_K| \geq ((1 - \alpha)/{2} + o_N(1)) N$, we have the first lower bound
\begin{equation}
    |O \cap [N]_K| \geq ((1 - \alpha)/{6} + o_N(1)) N. \label{eq:b1}
\end{equation}

We now use an alternative method to bound the density of $O$ in $[N]_K$ in terms of $\alpha$. By the definition of $\mB_2$, each maximal interval $I$ in $\mathcal{B}_2 \cap [N]_K$ with respect to $[N]_K$ has a length of at least two, except possibly for the last interval. This implies that by removing one string from each interval $I$, the density of $\mathcal{B}_2'$ in $[N]_K$ is at least
\[
\alpha \max_{I: |I| \geq 2} ({|I|-1})/{|I|} + o_N(1) \geq \alpha/{2} + o_N(1).
\]
Let $M$ be the position of the largest string output by the adversary by some finite time $T_0$, where $T_0$ is given by  Corollary \ref{cor:validityinfinite}. 
By the definition of $\rho$ and the fact that $|\rho^{-1}(o)| \leq 1$, there are at least 
\[ (|\mB_2'| - M) \geq  (\alpha/2 + o_N(1))N - M\] strings from $O$ among $[N]_K$. Since $M$ only depends on $T_0$
 and is independent from $N$, we conclude that the density of $O$  in $[N]_K$ satisfies
 \begin{equation}
     |O \cap [N]_K| \geq (\alpha/{2} + o_N(1)) N. \label{eq:b2}
 \end{equation}

Combining the two bounds in (\ref{eq:b1}) and (\ref{eq:b2}), we conclude that the density of $O$ in $[N]_K$ is at least
\[ \max\left( ( 1-\alpha)/6 + o_N(1), \alpha/2 + o_N(1) \right) \geq 1/8.\]
The equality is achieved when $\alpha$ satisfies the equation $(1-\alpha)/6 = \alpha/2$, which yields $\alpha = 1/4$.
Therefore, we conclude that the lower bound on the density of $O$ in $K$ is at least $1/8$. This completes the proof of Proposition \ref{lem:infLD}.

To complete the proof of Proposition \ref{lem:infLD}, it now suffices to prove Lemma \ref{claim:infrho}.

 \begin{proof}[Proof of Lemma \ref{claim:infrho}]
We can assume that we are after some finite time $T_0$, after which $K$ remains strictly critical, and all strings in the fallback string list $\mathcal{S}$ which are not in $K$ have been exhausted. Such a $T_0$ exists because, by Claim \ref{claim:finitetimesubsetK}, after some finite time, we only revisit languages that are subsets of $K$. Since $K$ will be visited infinitely often, we can select a time $T$ that is both after $T_0$ and after the second return to $K$ after $T_0$.

Let $N$ be the index of the largest string in $K$ that has been output by either the adversary or the algorithm up to and including time $T+1$. In other words, $\{o_1, \dots, o_{T+1}\} \cup \{w_1, \dots, w_{T+1}\} \subset \{\psi(1), \dots, \psi(N)\} \subset K$.
Let $I$ be any  maximal interval in $\mB_2$ with respect to $K$ consisting solely of strings $w_t$ for $t > T$, and not containing any of the first $N$ strings in $K$.

Let $I = [a_1, a_1+1, a_1+2, \dots]$. By the definition of $\mathcal{B}_2$, each string in $I$ is used exclusively by the adversary. Suppose the sequence of strings from $I$ output by the adversary, in chronological order, is $b_1, b_2, \dots$, which may not correspond to consecutive time stamps. Suppose $b_i$ is generated by the adversary at time $t_i$. Thus $t_1 < t_2 < \dots$.

Define $\mB_2'$ restricted to $I$ be $\{b_2, b_3, \dots\}$. In other words, $\mB_2' \cap I$ is the set of strings in $I$ that are  generated  by the adversary excluding the first one that is generated. 
 For each $i \geq 2$ where $b_i$ is defined, let $\theta(t_i)$ be the most recent time $t < t_i$ at which the following conditions hold simultaneously.
\begin{enumerate}
    \item[(a)] $\fbga{t} \supset \ga{t_i}$ or $\fbga{t} = \ga{t_i}$
    \item[(b)] \label{item:b} $\ga{t}$ is not a superset of $\ga{t_i}$ and $\ga{t} \neq \ga{t_i}$. 
\end{enumerate}
We will show that such $\theta(t_i)$ exists in Claim \ref{claim:theta}. 

Assume that $\theta(t_i)$ exists. 
Notice that at time $\theta(t_i)$, the language $\ga{\theta(t_i)}$ is strictly critical,  and at time $t_i$, $\ga{t_i}$ is strictly critical. 
Let $F(t_i)$ be the smallest language, under set inclusion, that is strictly critical at both times $\theta(t_i)$ and $t_i$, and such that $\ga{\theta(t_i)}\subset F(t_i)$  and $\ga{t_i} \subset F(t_i)$. Therefore 
\begin{equation}
    \ga{\theta(t_i)} \subset F(t_i) \subset \fbga{\theta(t_i)} \label{eq:Ft1}
\end{equation}
and 
\begin{equation}
    \ga{t_i} \subset F(t_i) \subset \fbga{\theta(t_i)}. \label{eq:Ft2}
\end{equation}
The fact that $F(t_i) \subset \fbga{\theta(t_i)}$ is by the minimality in the definition of $F(t_i)$, since  $\fbga{\theta(t_i)}$ is an option that satisfies both (\ref{eq:Ft1}) and (\ref{eq:Ft2}), and there are only finitely many candidate languages. In particular, this means $F(t_i)$ exists. By Item (b) above, (\ref{eq:Ft1}) can be written as 
\begin{equation}
    \ga{\theta(t_1)} \subsetneq F(t_i) \subset \fbga{\theta(t_1)}. \label{eq:Ft3}
\end{equation}

After defining $F(t_i)$, we define $F'(t_i)$ as the strictly critical set at time $\theta(t_i)$ that immediately follows $F(t_i)$ in the descending chain of strictly critical languages at time $\theta(t_i)$. In other words, suppose that at time $\tti$, the set of strictly critical languages forms a descending chain $\lanj{1} \supsetneq \lanj{2} \supsetneq \dots$. If $F(t_i) = \lanj{s}$ for some positive integer $s$, then $F'(t_i) = \lanj{s+1}$.

  By (\ref{eq:Ft3}), at time $\theta({t_i})$, the token value $N_{\theta(t_i)} \geq 2$. In particular, $N_{\theta(t_i)}$ is at least twice the distance in the strictly critical set descending chain at time $\theta(t_i)$ between $\fbga{\theta(t_i)}$ and $\ga{\theta(t_i)}$. In other words, if $\fbga{\theta(t_i)} = \lanj{s}$ and $\ga{\theta(t_i)} = \lanj{r}$, then $N_{\theta(t_i)} = 2(r-s)$. 

  One could interpret the count $N_{\theta(t_i)}$ as charging $\mS_{\theta(t_i)}$ with two elements for each step down the descending chain of strictly critical languages, from $\fbga{\theta(t_i)}$ to $\ga{\theta(t_i)}$. Using this interpretation, let $\Phi(b_i)$ denote the two strings charged as we move down one step along the descending chain from $F(t_i)$ to $F'(t_i)$ at time $\theta(t_i)$. Thus $\Phi(b_i)$ is determined by $\theta(t_i)$ and $F(t_i)$. 
  
Note that both sets $F(t_i)$ and $F'(t_i)$ are strictly critical languages at time $\theta(t_i)$, and both are positioned between (inclusive) $\ga{\theta(t_1)}$ and $\fbga{\theta(t_1)}$ in the descending chain of strictly critical languages at time $\tti$, as per (\ref{eq:Ft3}). Therefore, $\Phi(b_i)$ is well-defined if $\theta(t_i)$ exists.

  \begin{claim}\label{claim:theta}
      $\theta(t_i)$ exists. 
  \end{claim}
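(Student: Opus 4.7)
The plan is to exhibit $\theta(t_i)$ as the most recent transition time of $\ga{}$ strictly before $t_i$ at which the identified language passes from a superset of $\ga{t_i}$ to a non-superset, and then verify both (a) and (b) at this transition.

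First I would establish the key preliminary fact that $\ga{t_i-1}$ is not a superset of $\ga{t_i}$. This uses the bad-string property $b_i \in \mB_2 \subseteq \mB_b$, which gives $o_{t_i-1} > \Succ_K(b_i)$. Since $b_i \in K \setminus O$, the algorithm never outputs $b_i$, and the adversary outputs $b_i$ only at time $t_i$, so $b_i$ is unused at time $t_i-1$. If we had $b_i \in \ga{t_i-1}$, then the algorithm's output rule (Items 2 and 5(a) of the algorithm) would force $o_{t_i-1} \leq b_i < \Succ_K(b_i)$, contradicting the bad-string inequality. Hence $b_i \notin \ga{t_i-1}$ while $b_i \in \ga{t_i}$ by consistency at time $t_i$, so $\ga{t_i-1} \not\supseteq \ga{t_i}$ and in particular $\ga{t_i-1} \neq \ga{t_i}$.

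Next, by Claim \ref{claim:finitetimesubsetK} and our choice of $T$ past the second return to $K$ after $T_0$, there is some time $t_K$ with $T_0 < t_K < t_i$ such that $\ga{t_K} = K \supseteq \ga{t_i}$. Combining with $\ga{t_i-1} \not\supseteq \ga{t_i}$, the sequence $\ga{t_K}, \ga{t_K+1}, \dots, \ga{t_i-1}$ must contain at least one transition from a superset of $\ga{t_i}$ to a non-superset of $\ga{t_i}$. Let $\tau$ be the most recent such transition time strictly before $t_i$; it is well-defined, and a transition necessarily occurs at $\tau$, so $\fbga{\tau}$ is defined.

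Finally I verify (a) and (b) at $\tau$. Condition (b) is immediate by the defining property of $\tau$: $\ga{\tau}$ is not a superset of $\ga{t_i}$, so in particular $\ga{\tau} \neq \ga{t_i}$. For condition (a), I inspect the three cases of the fallback rule. The case $\ga{\tau} \supsetneq \ga{\tau-1}$ is impossible, since $\ga{\tau-1} \supseteq \ga{t_i}$ would then force $\ga{\tau} \supseteq \ga{t_i}$, contradicting the choice of $\tau$. In the case $\ga{\tau} \subsetneq \ga{\tau-1}$, the algorithm falls back to $\fbga{\tau} = \ga{\tau-1} \supseteq \ga{t_i}$. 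In the else case, $\fbga{\tau}$ is taken to be the minimum common ancestor of $\ga{\tau-1}$ and $\ga{\tau}$ in their respective dynamic trees, which contains $\ga{\tau-1} \supseteq \ga{t_i}$. In every feasible case, $\fbga{\tau} \supseteq \ga{t_i}$, establishing (a). Therefore $\theta(t_i) = \tau$, proving Claim \ref{claim:theta}. The main obstacle in the argument is the preliminary step $b_i \notin \ga{t_i-1}$, which requires a careful bookkeeping of the algorithm's output rule in terms of $\mS$ and $\ga{}$; once this is in place, the rest of the argument is a direct trace-back through the fallback history using Claim \ref{claim:finitetimesubsetK}.
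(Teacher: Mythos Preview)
Your argument is correct and takes a cleaner route than the paper's. Both proofs locate a transition time at which $\ga{}$ passes from being a superset of $\ga{t_i}$ to not being one, and verify (a) via the fallback rule at that moment. The paper reaches its initial witness for condition~(b) indirectly, invoking $i\ge 2$ and a Claim~\ref{claim:climbup}-style argument to produce some $t_{i-1}'\in[t_1,t_i)$ with $\fbga{t_{i-1}'}\not\ni b_i$; you bypass this entirely by observing that $b_i\notin \ga{t_i-1}$ follows directly from the bad-string inequality $o_{t_i-1}>\Succ_K(b_i)$. Your route is shorter and does not actually use $i\ge 2$.

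Two minor points. First, your Step~1 cites only Items~2 and~5(a) of the output rule, but a fallback (Items~3, 4, or~5(b)) could well occur at time $t_i-1$. The conclusion $o_{t_i-1}\le b_i$ still holds in those cases --- since $\ga{t_i-1}\subseteq\fbga{t_i-1}$, the updated fallback list $\mS$ contains either $b_i$ itself (if $b_i\le w'$) or the unused element $w'\le b_i$ (if $b_i>w'$) --- but you should say this explicitly rather than leave it to ``careful bookkeeping.'' Second, your final sentence ``$\theta(t_i)=\tau$'' overstates: $\tau$ witnesses that $\theta(t_i)$ exists, but $\theta(t_i)$ is by definition the \emph{most recent} time satisfying (a) and~(b), and there could be later times in $(\tau,t_i)$ where (b) still holds and (a) is again satisfied through a higher common ancestor. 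Neither point affects the validity of the claim.
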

\begin{proof}
    Here we use the fact that $i \geq 2$, i.e., $b_i$ is not the first string generated by the adversary among the strings in $I$. 
    By a similar reasoning as in the proof of Claim \ref{claim:climbup} in Theorem \ref{thm:finiteRankHighLD}, using the fact that $I$ is disjoint from $\mB_g$, there must exists $t_1 \leq t_{i-1}' < t_i$ such that $\fbga{t_{i-1}'}$ does not contain $b_{i}$. 
    Together with the fact that $b_i \in \ga{\ti}$, Item (b) is satisfied with $t$ replaced by $t_{i-1}'$.  

    On the other hand, by our assumption on the finite cutoff time $T$, we know there is a time $ t_K < t_1$, such that $\ga{t_K} = \fbga{t_K} = K$. Consider the first time $t' > t_K$ such that $\ga{t'} \subsetneq K$. Then for this $t'$, we have $\fbga{t'} = K$. Since $\ga{t_{i-1}'} \subsetneq K$ as it does not contain $b_i$, by the minimality of $t'$
 , it must be that $t' \leq t_{i-1}'$. Furthermore, since $\fbga{t_{i-1}'} \subsetneq K$, we must have $t' \neq t_{i-1}'$. Therefore $t' < t_{i-1}'$.

    We claim that there is a time $t$ where $t' \leq t \leq t'_{i-1}$  such that Conditions (a) (b) are both satisfied, and therefore $\theta(\ti)$ must exist. 
    
   We have just shown that $t' < t_{i-1}'$. Notice $t'$ satisfies (a). 
At the critical moment when we have the smallest $t \geq t'$ such that $\ga{t}$ is neither a superset of nor equal to $\ga{t_i}$, Item (b) will be satisfied.   Therefore, if (b) is never satisfied with any $t' \leq t \leq t_{i-1}'$, it means for all $t$ such that $t' \leq t \leq t_{i-1}'$, it must be that $\ga{t} \supset \ga{\ti}$. However, this contradicts with the fact that (b) is satisfied at time $t_{i-1}'$.   Therefore the critical moment $t$ exists. And at this time $t$, Item (b) is satisfied. 

By our definition of the fallback language, we know that at time $t-1$ and time $t$, $\fbga{t} \supset \ga{t}$ and $\fbga{t} \supset \ga{t-1}$. Since $t$ is the critical moment, we know $\ga{t-1} \supset \ga{\ti}$ if $t > t'$.  Thus $\fbga{t} \supset \ga{\ti}$. Therefore, the time $t'$ satisfies both (a) and (b).  If $t=t'$, then by the definition of $t'$, $\fbga{t'
} = K$. Therefore, $t'$ also satisfies (a). Since $t' = t$ also satisfies (b), we have that both Items (a) and (b) are satisfied. 
   \end{proof}  

   \begin{claim}\label{claim:smallPhi}
       The function $\Phi: \mB_2' \to K^2$  (here $K^2$ denote the set of  unordered pairs of strings in $K$) defined above is injective. 
   \end{claim}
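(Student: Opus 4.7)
Plan: The proof will be by contradiction. Suppose $\Phi(b_i) = \Phi(b_j)$ for two distinct inputs $b_i \neq b_j$ in $\mB_2'$, with $t_i < t_j$ (possibly in different intervals $I$). The goal is to show that the pair $\Phi(b_i) \in K^2$ uniquely encodes the fallback time $\theta(t_i)$ together with the descending-chain step associated with $F(t_i)$, and that this data in turn forces $b_i = b_j$.

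Step 1 (the fallback time is recoverable). When the algorithm falls back at time $t$ with token $N_t$, by definition we add $N_t$ strings from $\fbga{t}$ that are strictly larger than $w'$ (the least element of $\fbga{t}$ exceeding the current max output $w$). Because $w$ is nondecreasing in $t$ and every string previously placed in $\mS$ is eventually either output by the algorithm or consumed by the adversary (recall that the algorithm prioritizes unused strings from $\mS$), the ``next $N_t$ strings above $w'$'' at distinct fallback events lie in disjoint regions of $\mathbb{N}$. Hence the charged pairs are disjoint across distinct fallback times, and $\Phi(b_i) = \Phi(b_j)$ forces $\theta(t_i) = \theta(t_j) =: \theta$.

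Step 2 (the descending-chain step is recoverable). Within the fallback event at time $\theta$, the $N_\theta = 2(r-s)$ charged strings partition into $r-s$ pairs, one per step of the strictly-critical descending chain $\fbga{\theta} = \lanj{s} \supsetneq \lanj{s+1} \supsetneq \cdots \supsetneq \lanj{r} = \ga{\theta}$. Since $\Phi(b_i)$ is precisely the pair assigned to the step from $F(t_i)$ to $F'(t_i)$, the equality $\Phi(b_i) = \Phi(b_j)$ forces $F(t_i) = F(t_j) =: F$ and $F'(t_i) = F'(t_j)$. By the containment chain \eqref{eq:Ft3}, both $\ga{t_i}$ and $\ga{t_j}$ are strict subsets of $F$, and $F$ is strictly critical at each of $\theta$, $t_i$, $t_j$; by Claim~\ref{claim:consist}, $F$ therefore remains strictly critical throughout the whole interval $[\theta, t_j]$.

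Step 3 (contradict the minimality of $\theta(t_j)$). Since $\theta(t_j) = \theta$, no time $t^\dagger \in (\theta, t_j)$ can satisfy both (a) and (b) relative to $\ga{t_j}$. I would exhibit such a $t^\dagger$ by tracking the algorithm on $(\theta, t_j)$: since $\Acc$ visits languages above $F$ in the tree $\md_t^*$ (for example $K$ or the ancestors of $F$) infinitely often, there must be a time $t^\dagger \in (\theta, t_j)$ at which the algorithm falls back to some strictly-critical ancestor of $F$ in the descending chain, so $\fbga{t^\dagger} \supseteq F \supseteq \ga{t_j}$, giving (a); simultaneously, at this $t^\dagger$ the language $\ga{t^\dagger}$ lies strictly above $F$ in the descending chain and hence cannot contain $\ga{t_j}$, giving (b). This contradicts $\theta(t_j) = \theta$, so $\Phi$ must be injective.

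The main obstacle is Step 3, where one must rigorously exhibit the witness $t^\dagger$ in the presence of the dynamic tree $\md_t^*$. The argument requires a careful case split on whether $\ga{t_i}$ and $\ga{t_j}$ are comparable under inclusion, together with a careful tracking of when during $(\theta, t_j)$ the algorithm's identified language $\ga{t}$ is positioned inside the subtree rooted at $F$ versus outside it. The key ingredients are the infinite-revisit property of $\Acc$ (Theorem~\ref{thm:acc}), the persistence of $F$ as strictly critical throughout the window (Claim~\ref{claim:consist}), and the fact that the algorithm's fallback rule always ascends to the least common ancestor in $\md_t^*$, which will force a fallback to an ancestor of $F$ at some intermediate time.
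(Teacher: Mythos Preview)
Your Step 3 contains a logical error that undermines the whole argument. You write that at your witness time $t^\dagger$, ``the language $\ga{t^\dagger}$ lies strictly above $F$ in the descending chain and hence cannot contain $\ga{t_j}$, giving (b).'' But this is backwards: if $\ga{t^\dagger}$ lies \emph{above} $F$ in the chain (i.e., $\ga{t^\dagger} \supsetneq F$) and $\ga{t_j} \subseteq F$, then $\ga{t^\dagger}$ \emph{is} a superset of $\ga{t_j}$, so condition (b) fails rather than holds. Separately, invoking ``$\Acc$ visits $K$ infinitely often'' to produce a witness inside the fixed finite window $(\theta, t_j)$ is a non-sequitur: infinite recurrence says nothing about any particular bounded interval. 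So you have not produced a $t^\dagger$ violating maximality of $\theta(t_j)$, and the case split you allude to at the end is not a refinement of this argument but a replacement for it.

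The paper's route to injectivity of $\bar\Phi$ (which is really what Step 3 is) is quite different and does not proceed by contradicting the maximality of $\theta$. Instead, it fixes a putative common image $(\theta^*, F^*)$, lets $F^{*\prime}$ be the next strictly critical language below $F^*$ at time $\theta^*$, and splits on whether $F^{*\prime}$ is still strictly critical at time $t_b$. The substantive case is when it is not: one first shows (this is its own sub-claim) that every strictly critical language below $F^*$ at the moment $F^{*\prime}$ first becomes inconsistent remains consistent all the way to $t_b$; this forces any two candidate preimage times $s_1 < s_2 < \cdots$ to have nested identified languages $\ga{s_1} \supseteq \ga{s_2} \supseteq \cdots$, with $\ga{t}$ sandwiched between consecutive ones throughout. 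The punchline then uses the definition of $\mB_2'$ itself: for $i \ge 2$ the string $w_{s_i}$ is an unused element of $\ga{s_i-1}$ at time $s_i - 1$, so the algorithm's smallest-available-output rule forces $o_{s_i - 1} < w_{s_i}$, placing $w_{s_i}$ in $\mB_g$ and hence outside $\mB_2'$. That contradiction---not a violation of the maximality defining $\theta$---is what limits the fiber to a single element. Your proposal never touches the sets $\mB_g, \mB_2'$ in Step 3, which is the missing idea.

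A smaller issue: in Step 1 you assert that the $N_t$ charged strings at distinct fallback times ``lie in disjoint regions of $\mathbb{N}$'' because the running maximum $w$ is nondecreasing. But the charged strings at time $t$ are drawn from $\fbga{t}$, not from $\mathbb{N}$, and strings charged earlier need not be consumed before the next fallback; so a string above the current $w'$ could in principle be recharged from a different fallback language later. The paper sidesteps this by arguing directly that the pair $(\theta(t_b), F(t_b))$ is determined by $b$ and is itself injective, rather than trying to invert $\Phi$.
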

   \begin{proof}
       Notice that given any $b  \in \mB_2'$, where $b$ is generated by the adversary at time $t_b$, the values $\theta(t_b)$ and $F(t_b)$ uniquely determine the image of $\Phi$. It suffices to show that the induced function $\bar \Phi: \mB_2' \to \mathbb{N} \times \mX $ where $b$ is sent to  $(\theta(t_b), F(t_b))$ is injective. 

Fix an image where $\theta(\tb) = \ths$ and $F(\tb) = \Fs$. We seek all possible times $\tb$ such that $w_{\tb} \in \mB_2'$, $\theta(\tb) = \ths$, and $F(\tb) = \Fs$.
Notice that $\theta(\tb) = \ths$ and $F(t_b) = \Fs$ uniquely determined $F'(t_b)$. Write $F'(\tb) = \Fs'$. 

 At time $t_b$, one of the following two cases will happen. Either (1) the language $\Fs'$ is no longer strictly critical; Or (2) $\Fs'$ is still strictly critical at time $\tb$.  In this case, by the minimality of $F(t_b) = \Fs$, it will imply $\Fs' \subsetneq \ga{\ths}$ or $\Fs' \subsetneq \ga{\tb}$.

 Now suppose we are in Case (1). By Claim \ref{claim:consist}, it implies that $\Fs'$ is no longer consistent at time $\tb$. 
However, $\Fs'$ is consistent at time $\ths$ since it is a strictly crictial set at time $\ths$. 
       Let $t^*$ be the first time after $\ths$ such that $\Fs'$ is not consistent. Thus $\ths < \ts \leq \tb$.

       Let $\mC(\ts)_{\leq \Fs}$ be the set of strictly critical languages at time $\ts$ which are subsets of $\Fs$. 
       \begin{claim} \label{claim:stilltb}
       All the languages in $\mC(\ts)_{\leq \Fs}$ are still consistent at time $\tb$. 
       \end{claim}
       \begin{proof} We prove by contradiction. Suppose the conclusion does not hold. 
       It means that some language  in $\mC(\ts)_{\leq \Fs}$ is not consistent at some point between $\ts$ and $\tb$.
       
       Let $t_d$ be the first time after $\ts$ such that some language in $\mC(\ts)_{\leq \Fs}$ is not consistent. Thus $\ts < t_d  \leq \tb$. Therefore the string generated by the adversary at time $t_d$, denoted by $w_{t_d}$, is not in $\ga{t_d-1}$. However, $w_{t_d} \in \ga{\tb}$. Then the language $\ga{t_d-1}$ is not a superset of $\ga{\tb}$ and thus Item (b) is satisfied at time $t_d-1$. At time $\ts$, since $\fbga{\ts} = \Fs \supset \ga{\tb}$, we have that Item (a) is satisfied. Then by the same argument as in Claim \ref{claim:theta} with $t'$ replaced with $\ts$ and $\tim$ replaced with $t_d-1$ and $K$ replaced with $\Fs$, we have that $\theta(\tb)$ should be at least $\ts$. This contradicts with the fact that $\theta(\tb) = \ths$ as $\ths < \ts \leq \tb$. 
       \end{proof}

       Claim \ref{claim:stilltb} and Claim \ref{claim:consist} imply that $\mC(\ts)_{\leq \Fs}$ is the same as $\mC(\tb)_{\leq \Fs}$. In particular, it implies that 
       $\ga{\tb}$ must be in $\mC(\ts)_{\leq \Fs}$.  

Observe that if there exist $s < s'$ such that both $\ga{s}$ and $\ga{s'}$ are strictly critical at time $s$, 
but $\ga{s} \subsetneq \ga{s'}$, then at time $s'$, the language $\ga{s}$ is no longer consistent. This fact together with Claim \ref{claim:stilltb} implies that for all $t' < t''$ with $\ts \leq t'  < t'' \leq  \tb$, it must be $\ga{t'} \supset \ga{t''} \supset \ga{\tb}$.      
       
       We are now ready to prove that in Case (1), $\bar\Phi$ is injective. Suppose there are more than one choices of $\tb \in \mB_2'$ such that $\theta(\tb) = \ths$ and $F(\tb) = \Fs$. Denote these choices of $\tb$ as $s_1 < s_2< \dots$. By the arguments so far, they should satisfy that $\ga{s_{i+1}} \subset \ga{s_i}$ for all $i \geq 1$ and $\ga{s_i} \in \mC(\ts)_{\leq \Fs}$. Furthermore, for all $t$ such that $s_i \leq t < s_{i+1}$, it must be $\ga{s_{i+1}} \subset \ga{t} \subset \ga{s_i}$. 

      We will now show that all but at most one of the timestamps $s_1, s_2, \dots$ satisfy $w_{s_i} \in \mB_2$, which will lead to a contradiction. Fix any $i \geq 2$. Consider the time unit immediately before $s_i$, i.e., $s_i - 1$. At this time, the algorithm generates $o_{s_i-1}$. We claim that $o_{s_i-1} \leq w_{s_i}$. This is because $w_{s_i}$ is only used by the adversary at time $s_i$, and since $w_{s_i} \notin O$, the string $w_{s_i}$ is unused by either the adversary nor the algorithm prior to time $s_i-1$. Since $\ga{s_i-1} \supset \ga{s_i}$ and $w_{s_i}$ is a string in $\ga{s_i}$, we have $w_{s_i} \in \ga{s_i-1}$. By our algorithm, the output string by the algorithm $o_{s_i-1}$ should be the smallest available one in $\mS_{s_i-2} \cup \ga{s_i-1}$, which includes $w_{s_i}$. Therefore, $o_{s_i-1} \leq w_{s_i}$. Since $w_{s_i} \notin O$, it follows that $o_{s_i-1} < w_{s_i}$. Consequently, $w_{s_i} \in \mB_g$, which contradicts with the assumption that $s_i \in \mB_2'$.

       Suppose we are in Case (2).
        Since $\ga{\ths} \subsetneq \Fs$, the situation $\Fs' \subsetneq \ga{\ths}$ cannot happen. If $\Fs' \subsetneq \ga{\tb}$, since in Case (2) we assume $\Fs'$ is strictly critical a time $\tb$, it must be that $\ga{\tb} = \Fs$. 
In this case, $F^*$ is consistently strictly critical from time $\ths$ to $\tb$. Consider all the time $t$ such that $\ths+1 \leq t < \tb$. By a similar argument as in Claim  \ref{claim:stilltb}, the language $\ga{t}$ is again a subset of $F^*$ by the fact that $F^*$ is consistently strictly critical from time $\ths$ to $\tb$. If $\ga{t}$ is a strict subset of $F^*$, then since $\ga{\tb} = F^*$, it means all the strictly critical sets at time $\tb-1$ which are strict subsets of $F^*$ are no longer consistent at time $\tb$. In particular it implies ${F^*}'$  is not consistent at time $\tb$, which contradicts with our assumption of Case (2).  Therefore, we must have that for all $t$ with $\ths+1 \leq t \leq \tb$, the language $\ga{t} = \Fs$.
       However, since $\ga{\ths}\subsetneq \Fs$, it means that again $\Fs'$ is not consistent anymore at time $\tb$, a contradiction. 
   \end{proof} 
We now define $\rho$. Given $\Phi$, any adversary input string $w_{t_b} \in \mB_2'$ at time $t_b$ is mapped to two strings $x_1, x_2$ in $K^2$. Notice that $x_1, x_2 \in \fbga{\theta(\tb)}$ and $\ga{\tb} \subset \fbga{\theta(\tb)}$. 

 Notice that $x_1, x_2 \in \fbga{\theta(\tb)}$ and $w_{\tb} \in \ga{\tb} \subset F({\tb}) \subset \fbga{\theta(\tb)}$. When we are charging $\mS_{\theta(\tb)-1}$ to obtain $\mS_{\theta(\tb)}$, we are charging it with the smallest few unused strings in $\fbga{\theta(\tb)}$. Since $x_1, x_2$ are two strings charged consecutively to $\mS_{\theta(\tb)-1}$ at time $\theta(\tb)$, and $w_{\tb} \in \fbga{\theta(\tb)}$ has not been used at time $\theta(\tb)$, we must have 
\[ x_1, x_2 \leq w_{\tb}, \text{ or } w_{\tb} \leq x_1, x_2. \]

Case 1: $x_1, x_2 \leq w_{\tb}$.    

If at time $\tb$, after $w_{\tb}$ is generated by the adversary, at least one of these two strings $x_1, x_2$ are still available in $\mS_{\tb-1}$, then the output $o_{\tb}$ should satisfy $o_{\tb} \leq \max(x_1, x_2) \leq w_{\tb}$. This is because by our algorithm, we always prioritize outputting the smallest unused elements in $\mS_{\tb-1} \cup \ga{\tb}$. This also contradicts with the fact that $w_{\tb} \notin \mB_g$. 
Therefore at time $\tb$, none of the two strings $x_1, x_2$ are available. So these two strings have been used before time $\tb$ or used as $w_{\tb}$.

Let $r_1$ be the first time when $x_1$ is used by the adversary or the algorithm. It means either the adversary generates $x_1$ at time $r_1$ or the algorithm generates $x_1$ at time $r_1$. Define $r_2$ analogously. Since we have assumed none of the two strings $x_1, x_2$ are available after time $\tb$, we have  $r_1, r_2 \leq \tb$. Without loss of generality, assume $r_1 \leq r_2$. We can assume this because we never specify the order between $x_1, x_2$. 

If $r_1 = r_2$, it means at time $r_1 = r_2$, the strings $x_1, x_2$ are picked by the adversary and the algorithm up to exchange. Thus $r_1=r_2<\tb$. Then let $\rho(w_{t_b}) = o_{r_1}$. We thus have $\rho(w_{t_b}) \leq \max(x_1, x_2) \leq w_{t_b}$. Since $w_{t_b} \notin O$, we have $\rho(w_{t_b}) < w_{t_b}$. 

Now we assume $r_1 < r_2$. If $x_1$ is generated by the algorithm, then let $\rho(b) = x_1 = o_{r_1}$. If $x_1$ is generated by the adversary at time $r_1$, as $r_1 < r_2$, then the output string at time $r_1$, denoted as $o_{r_1}$, should satisfy that $o_{r_1} \leq x_2$. If this is not the case, at time $r_1$, the algorithm should have output a string that is at most $x_2$, since $x_2$ is in the set $\mS_{r_1-1}$ and also not generated by the adversary at time $r_1$. However, $o_{r_1} \neq x_2$ as this would imply $r_1 = r_2$. Therefore, since $x_2 \leq w_{\tb}$, we have that $o_{r_1} \leq w_{\tb}$. Together with the fact that $w_{t_b} \notin O$, we have $o_{r-1} < w_{t_b}$.   Define $\rho(w_{t_b})  = o_{r_1}$. 
In all the cases, we have 
$\rho(w_{t_b})  = o_{r_1} < w_{t_b}$. 

Case 2: $w_{\tb} \leq x_1, x_2$. 

When we are charging $\mS_{\theta(\tb)-1}$ to obtain $\mS_{\theta(\tb)}$, we are charging it with the smallest few unused strings in $\fbga{\theta(\tb)}$, which potentially include $w_{\tb}, x_1, x_2$.  In the case when $w_{\tb} \leq x_1, x_2$, as $x_1, x_2$ are added to $\mS_{\theta(\tb)-1}$ at time $\theta(\tb)$, $w_{\tb}$ is also added to $\mS_{\theta(\tb)-1}$ at time $\theta(\tb)$ if not earlier. At time $\tb-1$, the string $w_{\tb}$ is still available in $\mS_{\tb-2}$, since $w_{\tb}$ is only used by the adversary at time $\tb$ and never used by the algorithm.  Thus the algorithm output at time $\tb-1$, denoted $o_{\tb-1}$, should satisfy that $o_{\tb-1} < w_{\tb}$. This contradicts with the fact that $w_{\tb} \notin \mB_g$. 

Therefore we have the desired function $\rho$ and  $\rho$ is injective. 
\end{proof}
\end{proof}

\section{Concluding Remarks and Open Questions}

In this paper, we have used measures of density to 
quantify the {\em breadth} of an algorithm for
language generation in the limit:
roughly speaking, an algorithm for this problem has high
breadth if its outputs have high upper or lower density in the
true language $\trueL$.
We study this question for two notions of generation in the limit:
{\em element-based}, in which the goal is to output strings $\out_t$
that all belong to the true language $\trueL$ after some finite time
$\fint$, and 
{\em index-based}, in which the goal is to produce internal hypotheses
$\lang{i_t}$ that should all be subsets of $\trueL$ after some finite time
$\fint$.
Prior algorithms for this problem achieved both guarantees, but
with a set of outputs that could have zero upper density in $\trueL$.
It was therefore natural to ask whether this was a necessary trade-off:
whether any algorithm achieving generation in the limit must have some
instances where its outputs have zero density.

We show that this trade-off is not necessary for element-based
generation, providing a constant $c > 0$ and
an algorithm whose set of outputs has 
density at least $c$ in $\trueL$ for all instances.
For index-based generation, the situation is more complicated:
we show that there are instances where any algorithm solving this problem must have a subsequence
of its languages $\lang{i_t}$ whose upper densities converge to 0,
but we also give an algorithm for which $\lang{i_t} = \trueL$ infinitely
often. 
That is, this algorithm exhibits an intrinsically bimodal type of
behavior, and unavoidably so:
it always has a subsequence
of its languages $\lang{i_t}$ whose lower densities are 1, even though
 it must have another subsequence of its languages $\lang{i_t}$ 
whose upper densities converge to 0.

There are a number of interesting questions left open by this work.
First, we have not optimized the constant $c > 0$ in our algorithm
that achieves an output set of lower density at least $c$ in every instance.
Our analysis shows that $c = 1/8$ is achievable, but we find it likely
that the same algorithm guarantees a larger lower density;
the best possible result would be a lower density of at least $c$ for
every $c < 1/2$, as we have for our guarantee on upper density.

It would also be interesting to see if there are any additional 
guarantees that could be established if we relax the notion of
generation in the limit to allow for failures of validity that
can occur infinitely often but increasingly rarely.
In particular, suppose that $Z$ is the set of time steps $t$ for 
which the output string $\out_t$ does not belong to $\trueL$.
Language generation in the limit requires that $Z$ be finite.
If we relax this restriction to
require only that $Z$ have zero upper density in the set
of all time steps $\{1, 2, 3, 4, \ldots\}$, can we achieve
any stronger guarantees from a language generation algorithm?

Finally, we could consider connecting the notion of density
to some of the other recent models for breadth explored in
Charikar and Pabbaraju \cite{charikar-pabbarju} and
Kalavasis, Mehrotra, and Velagkas \cite{kalavasis-stoc25}.
As noted earlier in the paper, these other models show negative
results for stronger breadth requirements in models different from ours,
but our definitions of upper and lower density can be adapted to their
settings, and it would be interesting what types of density
guarantees can be achieved in their models.
In a different direction, the work of 
Li, Raman, and Tewari \cite{li-generation-pac} cast some of
the core definitions for language generation in the limit
in a learning-theoretic framework, deriving interesting new
results in this formalism, and it would be interesting to see
whether our notions of density can be adapted to their setting as well.


\begin{thebibliography}{10}
\bibitem{angluin1979finding}
D. Angluin.
\newblock Finding patterns common to a set of strings.
\newblock In {\em Proceedings of the 11th annual ACM Symposium on Theory of
  Computing}, pages 130--141, 1979.

\bibitem{angluin1980inductive}
D. Angluin.
\newblock Inductive inference of formal languages from positive data.
\newblock {\em Information and Control}, 45(2):117--135, 1980.

\bibitem{arjovsky2017towards}
M. Arjovsky, L. Bottou.
Towards principled methods for training generative adversarial
  networks.
Proc. International Conference on Learning Representations (ICLR), 2017.

\bibitem{arjovsky2017wasserstein}     
M. Arjovsky, S. Chintala, L. Bottou.
Proc. International Conference on Machine Learning (ICML), 2017.


\bibitem{charikar-pabbarju}
M. Charikar and C. Pabbaraju. Exploring facets of language generation in the limit. arXiv preprint arXiv:2411.15364, 2024.

\bibitem{gierz-pospace}
G. Gierz, K. Hofmann, K. Keimel, J. Lawson, M. Mislove, D. Scott. \emph{Continuous Lattices and Domains}, Cambridge University Press, 2009. 

\bibitem{gold1967language}
EM. Gold, 
\newblock Language identification in the limit.
\newblock {\em Information and Control}, 10(5):447--474, 1967.

\bibitem{kalai2023calibrated}
A. Kalai, S. Vempala.
\newblock Calibrated language models must hallucinate.
Proc. ACM Symposium on Theory of Computing (STOC), 2024.

\bibitem{kalavasis-stoc25}
A. Kalavasis, A. Mehrotra, G. Velegkas.
On the Limits of Language Generation: Trade-Offs Between Hallucination and Mode-Collapse.
In {\em Proceedings of the 57th annual ACM Symposium on Theory of
  Computing}, 2025. 


\bibitem{settheory}
 A. S. Kechris, Classical Descriptive Set Theory. Graduate Texts in Mathematics, vol. 156,
Springer-Verlag, New York, 1995.

\bibitem{KM24}
 J. Kleinberg, S. Mullainathan. Language Generation in the Limit. {\it Advances in Neural Information Processing Systems (NeurIPS)} {\textbf{38}}, 2024. 


\bibitem{li-generation-pac}
J. Li, V. Raman, A. Tewari. Generation through the lens of learning theory. arXiv preprint arXiv:2410.13714, 2024.

\bibitem{peng-transformer}
B. Peng, S. Narayanan, C. Papadimitriou.
On Limitations of the Transformer Architecture.
{\it Proc. Conference on Language Modeling}, 2024.

\bibitem{sanford-transformer}
C. Sanford, D.J. Hsu, M. Telgarsky.
Representational strengths and limitations of transformers.
{\it Advances in Neural Information Processing Systems (NeurIPS)} 2023.

\bibitem{totalorder} E. Szpilrajn, Sur l’extension de l’ordre partiel (in French), {\it Fundamenta Matematicae} 16, pp. 386-389, 1930.

\bibitem{wang-transformer}
S. Wang, Y. Shen, S. Feng, H. Sun, S.H. Teng, W. Chen.
ALPINE: Unveiling the Planning Capability of Autoregressive Learning in Language Models.
arXiv preprint arXiv:2405.09220, 2024.



\bibitem{weiss-transformer}
G. Weiss, Y. Goldberg, E. Yahav.
Thinking Like Transformers.
{\it Proceedings of the 38th International Conference on Machine Learning
(ICML)} 139:11080-11090, 2021.

\bibitem{xu2024hallucination}
Z. Xu, S. Jain, and M. Kankanhalli.
\newblock Hallucination is inevitable: An innate limitation of large language
  models.
\newblock {\em arXiv preprint arXiv:2401.11817}, 2024.
 
\end{thebibliography}
\end{document}